\DeclareFontFamily{OMX}{MnSymbolE}{}
\DeclareSymbolFont{MnLargeSymbols}{OMX}{MnSymbolE}{m}{n}
\DeclareFontShape{OMX}{MnSymbolE}{m}{n}{
    <-6>  MnSymbolE5
   <6-7>  MnSymbolE6
   <7-8>  MnSymbolE7
   <8-9>  MnSymbolE8
   <9-10> MnSymbolE9
  <10-12> MnSymbolE10
  <12->   MnSymbolE12
}{}
\DeclareFontShape{OMX}{MnSymbolE}{b}{n}{
    <-6>  MnSymbolE-Bold5
   <6-7>  MnSymbolE-Bold6
   <7-8>  MnSymbolE-Bold7
   <8-9>  MnSymbolE-Bold8
   <9-10> MnSymbolE-Bold9
  <10-12> MnSymbolE-Bold10
  <12->   MnSymbolE-Bold12
}{}
\let\llangle\@undefined
\let\rrangle\@undefined
\DeclareMathDelimiter{\llangle}{\mathopen}%
                     {MnLargeSymbols}{'164}{MnLargeSymbols}{'164}
\DeclareMathDelimiter{\rrangle}{\mathclose}%
                     {MnLargeSymbols}{'171}{MnLargeSymbols}{'171}
\theoremstyle{plain} 
\newtheorem{theorem}{Theorem}[section]
\newtheorem{corollary}[theorem]{Corollary}
\newtheorem{proposition}[theorem]{Proposition}
\newtheorem{lemma}[theorem]{Lemma} 
\theoremstyle{definition} 
\newtheorem{definition}[theorem]{Definition}
\theoremstyle{remark} 
\newtheorem{remark}[theorem]{Remark}
\definecolor{shadecolor}{rgb}{1,0.8,0.3}
\newcommand{\IGNORE}[1]{}
\newcommand{\A}{\mathcal{A}}
\newcommand{\C}{\mathscr{C}}
\renewcommand{\L}{\mathscr{L}}
\newcommand{\M}{\mathscr{M}}
\newcommand{\N}{\mathbbm{N}}
\renewcommand{\P}{\mathsf{P}}
\newcommand{\R}{\mathbbm{R}}
\newcommand{\T}{\mathrm{T}}
\newcommand{\U}{\mathscr{U}}
\newcommand{\V}{\mathscr{V}}
\newcommand{\Y}{\mathscr{Y}}
\newcommand{\BM}{\boldsymbol{m}}
\newcommand{\BV}{\boldsymbol{v}}
\newcommand{\CB}{\C_\mathrm{b}}
\newcommand{\GS}{\geqslant}
\newcommand{\LS}{\leqslant}
\newcommand{\BAN}{E}
\newcommand{\BOP}{\mathcal{B}}
\newcommand{\BVS}{\mathrm{BV}}
\newcommand{\DOM}{\mathrm{dom}}
\newcommand{\DST}{\displaystyle}
\newcommand{\EPS}{\varepsilon}
\newcommand{\LEB}{\mathcal{L}}
\newcommand{\LIP}{\mathrm{Lip}}
\newcommand{\LOC}{\mathrm{loc}}
\newcommand{\ONE}{\mathbbm{1}}
\newcommand{\RHO}{\varrho}
\newcommand{\SEP}{\mathcal{M}}
\newcommand{\SBB}{\mathbb{S}}
\newcommand{\FLOW}{\boldsymbol{X}}
\newcommand{\VELO}{\boldsymbol{b}}
\newcommand{\WEAK}{\DOTSB\protect\relbar\protect\joinrel\rightharpoonup}
\newcommand{\BOREL}{\mathcal{B}}
\newcommand{\GRAPH}{\mathrm{graph}}
\newcommand{\POWER}{\mathcal{P}}
\newcommand{\ARGMAX}{\mathrm{argmax}}
\newcommand{\LINEAR}{\mathcal{L}}
\newcommand{\SPHERE}{\mathcal{S}}
\DeclareMathOperator{\CONV}{\mathrm{conv}}
\DeclareMathOperator*{\ESUP}{\mathrm{ess\,sup}}
\DeclareMathOperator*{\REAL}{\mathrm{Re}}
\DeclareMathOperator*{\LIMSUP}{\mathrm{ap\,\!\limsup}}
\numberwithin{equation}{section}
\title 
    [Flow solutions of transport equations]
    {Flow solutions of transport equations}
\author
    {Sholeh Karimghasemi}
\address
    {Sholeh Karimghasemi,
     Lehrstuhl f\"{u}r Mathematik (Analysis),
     RWTH Aachen University,
     Templergraben 55,
     D-52062 Aachen, 
     Germany}
\email
    {karimghasemi@instmath.rwth-aachen.de}
\author
    {Siegfried M\"{u}ller}
\address
    {Siegfried M\"{u}ller,
     Institut f\"{u}r Geometrie und Praktische Mathematik,
     RWTH Aachen University,
     Templergraben 55,
     D-52056 Aachen, 
     Germany}
\email
    {mueller@igpm.rwth-aachen.de}
\author
    {Michael Westdickenberg}
\address
    {Michael Westdickenberg,
     Lehrstuhl f\"{u}r Mathematik (Analysis),
     RWTH Aachen University,
     Templergraben 55,
     D-52062 Aachen, 
     Germany}
\email
    {mwest@instmath.rwth-aachen.de}
\date{\today}
\subjclass[2010] 
    {35F10, % Initial value problems for linear first-order equations
	35A02,  % Uniqueness problems: global uniqueness, local uniqueness, non-uniqueness
	35D30,  % Weak solutions
	28B20}  % Set-valued set functions and measures; integration of set-valued functions; measurable selections
\keywords 
    {Transport equations, non-uniqueness, measurable selection theorem}
\begin{document}

\begin{abstract} 
Under general assumptions on the velocity field, it is possible to construct a
flow that is forward untangled. Once such a flow has been selected, the
associated transport problem is well-posed.
\end{abstract}

\maketitle 
%\tableofcontents

%%%%%%%%%%%%%%%%%%%%%%%%%%%%%%%%%%%%%%%%%%%%%%%%%%%%%%%%%%%%%%%%%%%%%%%%%%%%%%%%
%%%%%%%%%%%%%%%%%%%%%%%%%%%%%%%%%%%%%%%%%%%%%%%%%%%%%%%%%%%%%%%%%%%%%%%%%%%%%%%%
%%%%%%%%%%%%%%%%%%%%%%%%%%%%%%%%%%%%%%%%%%%%%%%%%%%%%%%%%%%%%%%%%%%%%%%%%%%%%%%%
%%%%%%%%%%%%%%%%%%%%%%%%%%%%%%%%%%%%%%%%%%%%%%%%%%%%%%%%%%%%%%%%%%%%%%%%%%%%%%%%
%%%%%%%%%%%%%%%%%%%%%%%%%%%%%%%%%%%%%%%%%%%%%%%%%%%%%%%%%%%%%%%%%%% Introduction

\section{Introduction}

Transport processes are ubiquitous in the natural and engineering sciences.
There are two complementary ways of describing such phenomena: Lagrangian and
Eulerian. The Lagrangian approach tracks the motion of transported quantities
(mass, charge). Its mathematical formulation involves ordinary differential
equations and flow maps. The Eulerian approach is based on the density of the
transported quantity, which  is  a function of time and space. Its evolution is
described by a first-order pde called the continuity equation. The two
approaches are closely related.

\medskip

In this paper, we  consider transport along a velocity field $\VELO$ in a
spatial domain $\Omega \subset \R^d$. The continuity equation for the density
$\RHO \GS 0$ then takes the form
\begin{equation}
	\partial_t \RHO + \nabla\cdot(\RHO\VELO) = 0
	\quad\text{in $(0,T) \times \Omega$.}
\label{E:CONTEQN}
\end{equation}
We assume that initial data $\RHO(0,\cdot) = \bar{\RHO}$ is given with
$\bar\RHO$ a nonnegative Borel measure. Equation~\eqref{E:CONTEQN} must be
interpreted in the weak (distributional) sense; see below. For simplicity, we
will not consider boundary conditions allowing for in/outflow through the
boundary $\partial\Omega$ or time-dependent spatial domains. While these
generalizations are very relevant for many applications, here we just insist
that the transported quantity, described through its density $\RHO$, does not
exit the domain $\Omega$.

On the Lagrangian side, we are instead concerned with solutions to
\begin{equation}
	\dot{\gamma}(t) = \VELO\big( t,\gamma(t) \big),
	\quad
	\gamma(0) = x
\label{E:DIFFEQN}
\end{equation}
for $t \in [0,T]$ and $x \in \Omega$. If \eqref{E:DIFFEQN} can be solved for all
$x$, then we define the flow
\begin{equation}
\begin{gathered}
	\FLOW \colon [0,T]\times \Omega \longrightarrow \R^d,
\\
	\FLOW(t,x) := \gamma_x(t)
	\quad\text{where $\gamma_x$ is the solution of \eqref{E:DIFFEQN}.}
\end{gathered}
\label{E:FLOW}
\end{equation}
The solution of \eqref{E:CONTEQN} can then be recovered through the push-forward
\begin{equation}
	\RHO(t,\cdot) := \FLOW(t,\cdot) \# \bar\RHO
	\quad\text{for all $t\in[0,T]$.}
\label{E:PUSHF}
\end{equation}
This representation may not be unique if the flow is not. Recall that the
push-forward of the Borel measure $\bar\RHO$ under any Borel map $X \colon
\Omega \longrightarrow \R^d$ is defined by
\begin{equation}
    (X\#\bar\RHO)(A) := \bar\RHO\big( X^{-1}(A) \big)
    \quad\text{for all Borel sets $A\subset \R^d$.}
\label{E:PUH}
\end{equation}

The existence of solutions to \eqref{E:DIFFEQN} locally in time can be
established whenever $\VELO$ is continuous in space, by Cauchy-Peano theorem. In
order for $\gamma(t)$ to remain inside of $\Omega$, it is necessary that at the
boundary $\partial \Omega$ the velocity field $\VELO$ points back into $\Omega$.
Moreover, a mild assumption on the growth of $\VELO(t,x)$ as $|x| \to \infty$
ensures that solutions of \eqref{E:DIFFEQN} exist for all times $t$; see
Section~\ref{S:MSP} for details. If the map $x \mapsto \VELO(t,x)$ is Lipschitz
continuous, then the solutions of \eqref{E:DIFFEQN} are unique.

On the other hand, if the velocity field $\VELO$ is not Lipschitz in space, then
existence and uniqueness for \eqref{E:CONTEQN}/\eqref{E:DIFFEQN} are much more
subtle. Some regularity of $\VELO$ is needed to establish that solutions do
exist at all. One possible strategy is to approximate the continuity equation
\eqref{E:CONTEQN} by a parabolic regularization, as is done in
\cite{AmbrosioTrevisan2014}. In order to establish convergence to a solution of
\eqref{E:CONTEQN} (thereby proving existence) one must show that the commutator
$\nabla\cdot((\P_s\RHO)\VELO)-\P_s(\nabla\cdot(\RHO\VELO))$ vanishes strongly as
$s\to 0$, where $\P_s$ denotes a suitable regularizing semigroup. A sufficient
condition for this is Sobolev regularity of $\VELO$; see DiPerna-Lions
\cite{DiPernaLions1989}. One possible strategy for showing uniqueness of
solutions is to establish the \emph{renormalization property}, which means that
if $\RHO$ satisfies the continuity equation \eqref{E:CONTEQN}, then
$\beta(\RHO)$ satisfies 
\[
	\partial_t \beta(\RHO) + \nabla\cdot\big( \beta(\RHO) \VELO \big)
		+\big( \beta(\RHO)\RHO-\beta(\RHO) \big) \nabla\cdot\VELO = 0
	\quad\text{in $(0,T) \times \Omega$,}
\]
for smooth functions $\beta$. This is a consequence of the chain rule if all
quantities are smooth. In general, it requires a more sophisticated argument;
see \cite{AmbrosioTrevisan2014}. Typically we will not distinguish between a
measure that is absolutely con\-tinuous with respect to Lebesgue measure and its
Radon-Nikod\'{y}m derivative, to simplify notation.

Quite often, constructing approximate solutions $\RHO_\EPS$ of \eqref{E:CONTEQN}
also involves approximating the velocity field, e.g., by convolution $\VELO_\EPS
:= \VELO \star \varphi_\EPS$ with some mollifier $\varphi_\EPS$. Since typically
$\RHO_\EPS$ only converges weak* in the sense of measures it raises the issue of
passing to the limit in the approximate ``momentum'' $\RHO_\EPS\VELO_\EPS$.
There are two scenarios: If $\VELO$ is sufficiently smooth and $\VELO_\EPS
\longrightarrow \VELO$ strongly in the $\sup$-norm, for example, then the
velocity can become part of the test function. Alternatively, one can consider
the convergence $\RHO_\EPS\VELO_\EPS \WEAK \BM$ weak* in the sense of measures
first. One then has to argue that $\BM$ can be disintegrated in the form
$\RHO\VELO'$, which is often done by establishing that $\BM$ is absolutely
continuous with respect to $\RHO$ and using Radon-Nikod\'{y}m theorem. In this
case, it can happen that $\VELO' = \VELO$ only outside of a set of measure zero.

The well-posedness result \cite{DiPernaLions1989} was extended by Ambrosio
\cite{Ambrosio2004} to the case with $\VELO$ a function of bounded variation. A
crucial assumption was that the spatial divergence of $\VELO$ be essentially
bounded, which prevents the density $\RHO$ from vanishing (formation of vacuum)
and from concentrating (formation of singular measures). In particular, if
$\bar\RHO$ is absolutely continuous with respect to $d$-dimensional Lebesgue
measure $\LEB^d$, then so is $\RHO(t,\cdot)$ for all times $t$. Alternatively,
instead of imposing an $\L^\infty$-bound on (the negative part of)
$\nabla\cdot\VELO$ one can simply assume that $\VELO$ is such that
\begin{equation}
	C^{-1} \LS \RHO(t,x) \LS C
	\quad\text{for all $(t,x) \in (0,T)\times\Omega$,}
\label{E:NEARIN}
\end{equation}
with constant $C>0$. Assumption \eqref{E:NEARIN} is strictly weaker than
bounding $\nabla\cdot\VELO$; see Remark~3.15 in \cite{Bonicatto2017}. We then
say that the velocity $\VELO$ is \emph{nearly incompressible}.

As a tool for establishing the existence of flows for \eqref{E:DIFFEQN} Ambrosio
introduced what is nowadays known as the \emph{superposition principle}: there
exists a Borel probability measure $\eta$ on the space $\C([0,T]; \R^d)$ of
continuous curves such that 
\[
	\RHO(t,\cdot) = e_t \# \eta
	\quad\text{for all $t \in [0,T]$}
\]
where $e_t(\gamma) := \gamma(t)$ is the evaluation map. Moreover, the measure
$\eta$ is concentrated on the set of curves that are \emph{a.c.\ solutions of
\eqref{E:DIFFEQN}}; see Definition~\ref{D:ACSOL}. In particular, the
superposition principle implies the existence of solutions of \eqref{E:DIFFEQN}
for $\bar\RHO$-a.e.\ $x \in \Omega$. It follows from an abstract decomposition
result for currents (cf.\ \cite{Smirnov1993}) because the vector measure
$(\RHO,\RHO\VELO)$ satisfying \eqref{E:CONTEQN} can be interpreted as a normal
current. It is important to realize that $\RHO$ and $\VELO$ must be compatible
in the sense that the velocity field is tangential to the set where the density
is concentrated; see \cite{BouchitteButtazzoSeppecher1997, AlbertiMarchese2016}.
These results have been generalized to metric measure spaces, making contact to
the theory of Dirichlet forms; see \cites{AmbrosioTrevisan2014,
StepanovTrevisan2017} and the references therein.

The current state of the theory can be summarized as follows: If there is
existence of solutions of \eqref{E:CONTEQN} in a suitable class of functions,
then existence of solutions of \eqref{E:DIFFEQN} for a.e.\ $x\in\Omega$  can be
derived from the superposition principle. If additionally we have uniqueness for
\eqref{E:CONTEQN}, then one can prove the existence of a unique \emph{regular}
Lagrangian flow (which is a flow that preserves absolute continuity of the
density). A sufficient condition for this is Sobolev- or $\BVS$-regularity of
the velocity field $\VELO$ plus near incompressibility. We refer the reader to
\cites{DeLellis2007, AmbrosioCrippa2008, AmbrosioTrevisan2014,
BianchiniBonicatto2017}. The latter reference also contains a proof of
\emph{Bressan's conjecture}, which is a stability result for regular Lagrangian
flows that will be discussed in detail in Section~\ref{SS:SOFM}.

\medskip

The formal adjoint to the continuity equation is the transport equation
\begin{equation}
	\partial_t u + \VELO\cdot\nabla u = 0
	\quad\text{in $(0,T) \times \Omega$,}
\label{E:TREQN}
\end{equation}
with suitable initial/final data. In our opinion, the most natural way to
approach \eqref{E:TREQN} is to reinterpret the equation in the form
\begin{equation}
	\partial_t (\RHO u) + \nabla\cdot(\RHO u \VELO) = 0
	\quad\text{in $(0,T) \times \Omega$}
\label{E:TREQN2}
\end{equation}
where $\RHO$ solves the continuity equation \eqref{E:CONTEQN}. By a solution of
\eqref{E:TREQN} we will therefore mean a function $u$ that is integrable with
respect to the space-time measure 
\[
	\sigma(dx,dt) := \RHO(t,dx) \,dt
\]
and satisfies \eqref{E:TREQN2} in the distributional sense. This is the setting
for the compressible Euler equations of gas dynamics, for example, where the
Eulerian velocity
\[
	\BV(t,\cdot) \in \L^2\big( \Omega, \RHO(t,\cdot) \big)
	\quad\text{for all $t$,}
\]
the square-integrability expressing the fact that the \emph{kinetic energy} is
finite. If the velocity field $\VELO$ is nearly incompressible in the sense of
\eqref{E:NEARIN}, then integrability of $u$ can be assumed with respect to the
$(d+1)$-dimensional Lebesgue measure.

On the other hand, if there exists a flow $\FLOW$ as in \eqref{E:FLOW} and 
if 
\[
	\text{the map $x \mapsto \FLOW(t,x)$ is invertible,}
\]
then the solution $u$ of \eqref{E:TREQN} is trivial: just consider
\[
	u(t,z) := \bar u\big(t, \FLOW(t,\cdot)^{-1}(z) \big)
	\quad\text{for $t \in [0,T]$, $z\in \FLOW(t,\Omega)$}
\]
where $\bar u := u(0,\cdot)$ is some initial data. Indeed the transport equation
\eqref{E:TREQN} simply expresses the fact that $u$ must be constant along the
integral curves of $\VELO$.

More generally, if we are to solve
\begin{equation}
	\partial_t u + \VELO\cdot\nabla u + cu = f
	\quad\text{in $(0,T) \times \Omega$,}
\label{E:TREQN3}
\end{equation}
for suitable functions $c$ and $f$, then the function
\[
	U(t,x) := u\big( t, \FLOW(t,x) \big)
\]
satisfies the new equation $\partial_t U + CU = F$ where
\begin{equation}
	C(t,x) := c\big( t,\FLOW(t,x) \big)
	\quad\text{and}\quad
	F(t,x) := f\big( t,\FLOW(t,x) \big)
\label{E:FUNCS}
\end{equation}
for all $t\in[0,T]$ and $x\in\Omega$. Recall that $\partial_t \FLOW(t,x) =
\VELO(t, \FLOW(t,x))$ since the flow $\FLOW$ is built from solutions of
\eqref{E:DIFFEQN}. This simplifies the problem considerably: instead of solving
a partial differential equation, we only consider a \emph{family of ODEs}.

The problem of solving \eqref{E:TREQN3} therefore decomposes into two parts:
\begin{enumerate}

\item Geometry: Compute the flow $\FLOW$ and the density $\RHO$.

\item Transport: Solve the simplified equation $\partial_t U + CU = F$.

\end{enumerate}
Notice that the two steps are largely independent: The first step only depends
on the velocity field $\VELO$, not on the data for \eqref{E:TREQN3}, for
instance. On the other hand, in the second step the geometry of the transport is
completely obscured; it enters only through the redefined functions
\eqref{E:FUNCS}, which depend on the flow $\FLOW$. We feel that such a clear
separation of issues has several advantages: It allows us to utilize recent
developments in the theory of Lagrangian regular flows for non-smooth
velocities, such as \cite{BianchiniBonicatto2017}. On the other hand, the
resulting simplified transport equations can be solved using classical
variational formulations and the Lax-Milgram theorem (see Section~\ref{S:TE}),
which even provides error estimates. The paper has two parts: 
\begin{itemize}

\item In Section~\ref{S:MSP} we investigate conditions that ensure the existence
of a flow map $\FLOW$. Different from the regularity assumptions outlined above,
which imply the existence of a \emph{unique} regular Lagrangian flow, we will
consider more general (less regular) velocity fields and then \emph{select} a
suitable family of integral curves (see \cites{CardonaKapitanski2017a,
CardonaKapitanski2017b}), from which we can build the flow.

\medskip

\item In Section~\ref{S:TE} we introduce the notion of flow solutions of
\eqref{E:TREQN3} and then use a generalized Lax-Milgram theorem to establish
existence and uniqueness. We discuss possible numerical approximations using the
(discontinuous) Petrov-Galerkin framework, along the lines of
\cite{BroersenDahmenStevenson2018}.

\end{itemize}

In order to ensure existence of solutions for rather general velocity fields
$\VELO$ we will utilize the Filippov approach where the ordinary differential
equation is replaced by a differential inclusion, for a set-valued right-hand
side that captures the local behavior of $\VELO$ in the neighborhoods of points;
see Section~\ref{SS:DI} for details. The use of Filippov solutions in the
context of well-posedness of transport/continuity equations has already been
explored in \cite{PoupaudRascle1997}. In that paper, the authors do not require
any regularity for $\VELO$ beyond $\L^\infty$-boundedness, but they make the
\emph{a priori assumption} that Filippov solutions are unique so that a flow can
be constructed. Once the flow is defined, solutions of the continuity equations
(called \emph{measure solutions}) are given as the push-forward of initial data
under the flow. In this paper we will replace the uniqueness assumption of
\cite{PoupaudRascle1997} by a suitable selection principle.

Let us also mention the paper \cite{Gusev2018} where it is shown that a suitable
flow to the Borel velocity field $\VELO$ exists if and only if the corresponding
continuity equation has a non-negative measure-valued solution. Since $\VELO$
may not be smooth, this flow is typically only Borel measurable, which will also
be the case in our setting. The result of \cite{Gusev2018} uses a measurable
selection theorem, as we will do here. Its starting point is a representation of
the non-negative solution to the continuity equation as a \emph{superposition of
trajectories}, i.e., in terms of a probability measure on the set of integral
curves of $\VELO$, as in \cite{Ambrosio2004}. The Borel flow in \cite{Gusev2018}
is then constructed by selecting suitable curves from this superposition. In
contrast, we work directly with the set of Filippov solutions corresponding to
$\VELO$, without passing through the continuity equation. Our flow has the
important additional property of being \emph{untangled forward in time}, which
means that whenever integral curves meet, they coincide for all later times.
Untangledness plays a central role in \cite{BianchiniBonicatto2017}, as it
enables one to decompose the transport problem into a family of one-dimensional
problems along integral curves. This will also be the basis of our notion of
flow solution in Section~\ref{S:TE}.

%%%%%%%%%%%%%%%%%%%%%%%%%%%%%%%%%%%%%%%%%%%%%%%%%%%%%%%%%%%%%%%%%%%%%%%%%%%%%%%%
%%%%%%%%%%%%%%%%%%%%%%%%%%%%%%%%%%%%%%%%%%%%%%%%%%%%%%%%%%%%%%%%%%%%%%%%%%%%%%%%
%%%%%%%%%%%%%%%%%%%%%%%%%%%%%%%%%%%%%%%%%%%%%%%%%%%%%%%%%%%%%%%%%%%%%%%%%%%%%%%%
%%%%%%%%%%%%%%%%%%%%%%%%%%%%%%%%%%%%%%%%%%%%%%%%%%%%%%%%%%%%%%%%%%%%%%%%%%%%%%%%
%%%%%%%%%%%%%%%%%%%%%%%%%%%%%%%%%%%%%%%%%%%%%%%%%%%%%%%%%%%%%%%%%%%%%%% Notation

\section{Notation}

Let us introduce notation and general results that are used throughout the paper.

%%%%%%%%%%%%%%%%%%%%%%%%%%%%%%%%%%%%%%%%%%%%%%%%%%%%%%%%%%%%%%%%%%%%%%%%%%%%%%%%
%%%%%%%%%%%%%%%%%%%%%%%%%%%%%%%%%%%%%%%%%%%%%%%%%%%%%%%%%%%%%%%% Set-Valued Maps

\subsection{Set-Valued Maps}

Let $\Omega$ and $\BAN$ be two nonempty metric spaces.

%========== DEFINITION
\begin{definition}\label{D:SVM}
We denote by $\POWER(\BAN)$ the collection of subsets of $\BAN$.
\begin{itemize}

\item A map $S \colon \Omega \longrightarrow \POWER(\BAN)$ will be called a
\emph{set-valued map}.

\item The sets $S(x) \subset \BAN$, with $x \in \Omega$, are called the
\emph{values} of $S$, and
\[
	S(K) := \bigcup_{x\in K} S(x)
	\quad\text{for all $K \subset \Omega$}
\]
is the \emph{image} of $K$ under $S$.

\item We call \emph{domain} and \emph{graph} of $S$ the sets
\begin{gather*}
	\DOM(S) := \{ x \in \Omega \colon S(x) \neq \varnothing \},
\\
	\GRAPH(S) := \{ (x,\gamma) \in \Omega\times\BAN \colon 
		x \in \Omega, \gamma \in S(x) \}.
\end{gather*}

\item We call \emph{weak inverse} and \emph{strong inverse} of $S$ the maps
\[
\begin{gathered}
	S^-(A) := \{ x\in\Omega \colon S(x)\cap A \neq \varnothing \}
\\
	S^+(A) := \{ x\in\Omega \colon S(x) \subset A \}
\end{gathered}
	\quad\text{for all $A \subset \BAN$.}
\]

\end{itemize}
\end{definition}

Note that $S^-(\varnothing) = S^+(\varnothing) = \varnothing$ and $S^+(A) 
\subset S^-(A)$ for all $A\subset \BAN$.

%========== DEFINITION
\begin{definition}\label{D:USC}
Let $S \colon \Omega \longrightarrow \POWER(\BAN)$ be a set-valued map with
$\DOM(S) \neq \varnothing$. Then
\begin{itemize}

\item $S$ is \emph{upper semicontinuous} (abbreviated u.s.c.) at $x\in\Omega$ if
for any open set $V\subset E$ with $S(x) \subset V$, there exists a neighborhood
$U \subset \Omega$ of $x$ with
\[
	U \subset S^+(V)
	\quad\Longleftrightarrow\quad
	S(U) \subset V.
\]
We say that $S$ is u.s.c.\ if $S$ is upper semicontinuous at every $x\in
\Omega$.

% Indeed if $U \subset S^+(V)$, then for all $x\in U$ we have $S(x) \subset V$, by
% definition of strong inverse. This implies that $S(U) := \bigcup_{x\in U} S(x)
% \subset V$. Conversely, if $S(U) := \bigcup_{x\in U} S(x) \subset V$, then $S(x)
% \subset V$ for all $x\in U$. Again by definition of strong inverse, we conclude
% that $U\subset S^+(V)$.

\item $S$ is \emph{compact-valued} if $S(x)$ is compact in $\BAN$ for all 
$x \in \Omega$.

\end{itemize}
\end{definition}

%========== LEMMA
\begin{lemma}\label{L:USC}
Consider a set-valued map $S \colon \Omega \longrightarrow \POWER(\BAN)$ such
that $\DOM(S) = \Omega$. 
%(which implies that $S(x) \neq \varnothing$ for all $x\in \Omega$).
The following statements are equivalent:
\begin{enumerate}%[label=(\roman*)]

\item $S$ is u.s.c.

\item $S^+(V)$ is open in $\Omega$ whenever $V\subset\BAN$ is open.

\item $S^-(W)$ is closed in $\Omega$ whenever $W \subset \BAN$ is closed.

\end{enumerate}
If $S$ is compact-valued and u.s.c., and $K\subset\Omega$ is compact, then
$S(K)$ is compact.
\end{lemma}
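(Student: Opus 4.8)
The plan is to establish the equivalence of (1), (2), (3) by proving (1)$\Leftrightarrow$(2) directly from the definitions and then (2)$\Leftrightarrow$(3) via a set-theoretic duality between the strong and weak inverses, and finally to prove the compactness assertion by adapting the classical argument that continuous images of compact sets are compact. For (1)$\Rightarrow$(2): given an open $V\subset\BAN$, I would show $S^+(V)$ is a neighborhood of each of its points. If $x\in S^+(V)$, then $S(x)\subset V$, so upper semicontinuity at $x$ supplies a neighborhood $U$ of $x$ with $S(U)\subset V$, i.e.\ $U\subset S^+(V)$; hence $S^+(V)$ is open. For (2)$\Rightarrow$(1): fix $x\in\Omega$ and an open $V\supset S(x)$; then $x\in S^+(V)$, which is open by hypothesis, so $S^+(V)$ itself witnesses upper semicontinuity at $x$.

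For (2)$\Leftrightarrow$(3) I would use the identity
\[
	S^-(W) = \Omega \setminus S^+(\BAN \setminus W)
	\qquad\text{valid for every } W \subset \BAN,
\]
which is immediate from Definition~\ref{D:SVM}: $x\notin S^-(W)$ iff $S(x)\cap W=\varnothing$ iff $S(x)\subset\BAN\setminus W$ iff $x\in S^+(\BAN\setminus W)$. Applying this to a closed set $W$, whose complement is open, turns the openness statement (2) into the closedness statement (3); applying it to $W=\BAN\setminus V$ for $V$ open gives the converse. Since $\DOM(S)=\Omega$, all values $S(x)$ are nonempty, but this plays no role here.

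For the last assertion, let $S$ be compact-valued and u.s.c.\ and let $K\subset\Omega$ be compact; I would extract a finite subcover from an arbitrary open cover $\{V_i\}_{i\in I}$ of $S(K)$. For each $x\in K$, the compact value $S(x)$ is covered by $\{V_i\}$, so there is a finite $I_x\subset I$ with $S(x)\subset V_x:=\bigcup_{i\in I_x}V_i$; as $V_x$ is open and contains $S(x)$, u.s.c.\ at $x$ yields an open neighborhood $U_x\ni x$ with $S(U_x)\subset V_x$. The family $\{U_x\}_{x\in K}$ covers the compact set $K$, so $K\subset U_{x_1}\cup\dots\cup U_{x_n}$ for finitely many $x_1,\dots,x_n$, whence
\[
	S(K) \subset S(U_{x_1})\cup\dots\cup S(U_{x_n})
		\subset V_{x_1}\cup\dots\cup V_{x_n}
		= \bigcup_{j=1}^{n}\bigcup_{i\in I_{x_j}}V_i,
\]
a finite subfamily of the original cover; hence $S(K)$ is compact. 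I do not expect a genuine obstacle: the only points needing a little care are the complementation identity underlying (2)$\Leftrightarrow$(3), and the bookkeeping in the last part — the neighborhoods $U_x$ must be chosen \emph{after} the finite union $V_x\supset S(x)$ is fixed, so that the resulting subcover of $S(K)$ is genuinely finite.
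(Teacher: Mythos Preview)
Your proposal is correct and follows essentially the same approach as the paper: both arguments rest on the complementation identity $S^-(W)=\Omega\setminus S^+(\BAN\setminus W)$ and the standard finite-subcover argument for the compactness claim. The only cosmetic difference is that you prove the equivalences as (1)$\Leftrightarrow$(2) and (2)$\Leftrightarrow$(3), while the paper runs the cycle (1)$\Rightarrow$(2)$\Rightarrow$(3)$\Rightarrow$(1); your direct (2)$\Rightarrow$(1) is in fact slightly shorter than the paper's (3)$\Rightarrow$(1), which effectively passes through (2) anyway.
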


%========== PROOF
\begin{proof}
We proceed in three steps (see also \cite{Kisielewicz1991}).
\medskip

\textbf{Step~1.} We first observe that
\[
	\Omega\setminus S^\pm(A) = S^\mp(\BAN\setminus A)
	\quad\text{for all $A \subset \BAN$.}
\]
Indeed for all $x \in \Omega$ and $A\subset \BAN$ we have
\begin{align*}
	x \in \Omega\setminus S^-(A)
		& \quad\Longleftrightarrow\quad
			x \not\in S^-(A)
\\
		& \quad\Longleftrightarrow\quad
			S(x)\cap A = \varnothing
\\
		& \quad\Longleftrightarrow\quad
			S(x) \subset \BAN\setminus A
		\quad\Longleftrightarrow\quad
			x \in S^+(\BAN\setminus A).
\end{align*}
%
% Simlarly, we observe that
% %
% \begin{align*}
% 	x \in \Omega\setminus S^+(A)
% 		& \quad\Longleftrightarrow\quad
% 			x \not\in S^+(A) 
% \\
% 		& \quad\Longleftrightarrow\quad
% 			S(x) \not\subset A
% \\
% 		& \quad\Longleftrightarrow\quad
% 			S(x) \cap (\BAN\setminus A) \neq \varnothing
% 		\quad\Longleftrightarrow\quad
% 			x \in S^-(\BAN\setminus A).
% \end{align*}
% %
The statement with $S^-$ and $S^+$ interchanged can be proved analogously.

\medskip

\textbf{Step~2.} Suppose that $S$ is upper semicontinuous and $V \subset \BAN$
open. By definition of strong inverse, for every $x \in S^+(V)$ we have $S(x)
\subset V$. By definition of $u.s.c.$, there exists a neighborhood $U \subset
\Omega$ of $x$ with $U \subset S^+(V)$. Hence $(1)\Longrightarrow (2)$.

To prove that $(2) \Longrightarrow (3)$, consider a closed subset $W \subset
\BAN$. Then
\begin{align*}
	\text{$\BAN\setminus W$ is open}
		& \quad\Longrightarrow\quad
			\text{$S^+(\BAN\setminus W)$ is open}
\\
		& \quad\Longrightarrow\quad
			\text{$\Omega\setminus S^+(\BAN\setminus W)$ is closed in $\Omega$.}
\end{align*}
Because of Step~1, it follows that
\[
	\text{$S^-(W) = S^-\big( \BAN\setminus(\BAN\setminus W) \big)
		= \Omega \setminus S^+(\BAN\setminus W)$ is closed in $\Omega$.}
\]

Finally, we prove that $(3) \Longrightarrow (1)$. For given $x \in \Omega$
consider any open subset $V\subset \BAN$ with $S(x) \subset V$. Then
$\BAN\setminus V$ is closed, and it follows that
\[
	\text{$\Omega\setminus S^+(V) = S^-(\BAN\setminus V)$ 
		is closed in $\Omega$,}
\]
thus $S^+(V)$ is open; see again Step~1. Moreover, we have that
\begin{align*}
	S(x) \cap (\BAN\setminus V) = \varnothing
		& \quad\Longrightarrow\quad
			x \not\in S^-(E\setminus V)
\\
		& \quad\Longrightarrow\quad
			x \in S^+(V),
\end{align*}
by definition of weak inverse. Since $S^+(V)$ is open there is a neighborhood
$U\subset \Omega$ of $x$ with $U \subset S^+(V)$. As $x \in \Omega$ was
arbitrary, we conclude that $S$ is u.s.c.

\medskip

\textbf{Step~3.} Assume that $S$ is u.s.c.\ and compact-valued and that $K
\subset \Omega$ is compact. Let $\{ V_\alpha \colon \alpha \in \Lambda \}$ be an
open covering of $S(K)$. For any given $x \in \Omega$, the set
\[
	\text{$S(x)$ is compact}
	\quad\text{and}\quad 
	S(x) \subset \bigcup_{\alpha\in\Lambda} V_\alpha.
\]
Therefore there exists a subcovering $\{ V_{\alpha_k} \colon k=1,\ldots,
n(\omega) \}$ such that
\[
	S(x) \subset \bigcup_{k=1}^{n(\omega)} V_{\alpha_k} =: W_x
	\quad\text{and}\quad
	\text{$W_x \subset \BAN$ is open.}
\]
Since $S$ is u.s.c.\ we have that $S^+(W_x)$ is open; see Step~2. Then
$\{S^+(W_x) \colon x \in K\}$ is an open covering of the compact set $K$ and
there exist $x_1, \ldots, x_m \in K$ with
\begin{equation}
	K \subset \bigcup_{i=1}^m S^+(W_{x_i}).
\label{E:KSUBSET}
\end{equation}
By definition of strong inverse, we have that $x \in S^+(A)$ implies $S(x)
\subset A$, thus
\[
	S\big( S^+(A) \big)
		= \bigcup_{x \in S^+(A)} S(x)
		\subset A
	\quad\text{for all $A\subset\BAN$.}
\]
Taking the image of either side of \eqref{E:KSUBSET} under $S$ (see
Definition~\ref{D:SVM}), we obtain
\[
	S(K) \subset S\Bigg( \bigcup_{i=1}^m S^+(W_{x_i}) \Bigg)
		= \bigcup_{i=1}^m S\big( S^+(W_{x_i}) \big) 
		\subset \bigcup_{i=1}^m W_{x_i}.
\]
Since every $W_x$ is a finite union of sets $V_\alpha$ we have indeed found a
finite subcovering of $S(K)$ taken from the open covering $\{ V_\alpha \colon
\alpha\in\Lambda \}$. This proves the result.
\end{proof}

%%%%%%%%%%%%%%%%%%%%%%%%%%%%%%%%%%%%%%%%%%%%%%%%%%%%%%%%%%%%%%%%%%%%%%%%%%%%%%%%
%%%%%%%%%%%%%%%%%%%%%%%%%%%%%%%%%%%%%%%%%%%%%%%%%%%%%%%%%%%%%%%%%% Measurability

\subsection{Measurability}

Here we discuss measurability of set-valued maps.

%========== DEFINITION
\begin{definition}\label{D:MEAS}
Suppose that $(A, \A)$ is a measurable space and that $\BAN$ is a topological
space. We say that a set-valued map $S \colon A \longrightarrow \POWER(\BAN)$
is
\begin{itemize}

\item \emph{weakly measurable} if $S^-(V) \in \A$ for each open subset $V
\subset \BAN$;

\item \emph{measurable}  if $S^-(W) \in \A$ for each closed subset $W \subset
\BAN$.

\end{itemize}
A \emph{measurable selector} from $S$ is a measurable function $f \colon A
\longrightarrow \BAN$ such that
\[
	f(x) \in S(x)
	\quad\text{for all $x\in A$.}
\]
\end{definition}

%========== LEMMA
\begin{lemma}\label{L:EQUIMEAS}
Suppose that $(A, \A)$ is a measurable space and $\BAN$ is a metrizable space.
For a set-valued map $S \colon A \longrightarrow \POWER(\BAN)$ we have the
following:
\begin{enumerate}

\item If $S$ is measurable, then $S$ is weakly measurable.

\item If $S$ is compact-valued and weakly measurable, then $S$ is measurable.

\end{enumerate}
\end{lemma}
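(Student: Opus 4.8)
The plan is to base both implications on the elementary fact that the weak inverse converts unions into unions, namely
\[
	S^-\Big( \bigcup_{\alpha} B_\alpha \Big) = \bigcup_{\alpha} S^-(B_\alpha)
\]
for any family $\{B_\alpha\}$ of subsets of $\BAN$, which is immediate from the definition $S^-(B) = \{ x \colon S(x) \cap B \neq \varnothing \}$ in Definition~\ref{D:SVM}. Since $\BAN$ is metrizable I would fix a compatible metric $d$ once and for all; the two arguments then reduce to approximating open sets from inside by closed sets and closed sets from outside by open sets.

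For part~(1), I would take an open set $V \subset \BAN$ and put $W_n := \{ y \in \BAN \colon d(y, \BAN\setminus V) \GS 1/n \}$ for $n \in \N$ (reading the distance to the empty set as $+\infty$, which covers $V = \BAN$). Each $W_n$ is closed, being the preimage of a closed set under the continuous map $y \mapsto d(y,\BAN\setminus V)$, and $\bigcup_n W_n = V$ because every point of the open set $V$ has strictly positive distance to its complement. Then $S^-(V) = \bigcup_n S^-(W_n)$ by the identity above, every $S^-(W_n)$ lies in $\A$ since $S$ is measurable, and hence $S^-(V) \in \A$ because $\A$ is a $\sigma$-algebra; thus $S$ is weakly measurable. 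This step is routine.

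For part~(2), I would take a closed set $W \subset \BAN$ and put $V_n := \{ y \in \BAN \colon d(y,W) < 1/n \}$, which is open since $y \mapsto d(y,W)$ is continuous. The crux is the identity $S^-(W) = \bigcap_{n} S^-(V_n)$. The inclusion ``$\subset$'' is trivial from $W \subset V_n$. For ``$\supset$'', given $x$ lying in every $S^-(V_n)$ one picks $y_n \in S(x) \cap V_n$; here the compactness of $S(x)$ enters, via sequential compactness of compact subsets of a metric space, so some subsequence $y_{n_k}$ converges to a point $y \in S(x)$ (compact sets being closed). From $d(y_{n_k},W) < 1/n_k \to 0$ and continuity of $d(\cdot,W)$ one gets $d(y,W) = 0$, hence $y \in W$ as $W$ is closed, so $S(x) \cap W \neq \varnothing$ and $x \in S^-(W)$. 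Granting this identity, weak measurability of $S$ gives $S^-(V_n) \in \A$ for all $n$, and therefore $S^-(W) \in \A$, i.e.\ $S$ is measurable.

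The hard part will be precisely the inclusion ``$\supset$'' in the identity of part~(2): one must upgrade ``$S(x)$ meets every open neighbourhood of $W$'' to ``$S(x)$ meets $W$'' itself, and this is exactly where compact-valuedness and closedness of $W$ are indispensable --- without compactness the accumulation point $y$ need not belong to $S(x)$, and in fact weak measurability does not imply measurability in general. Everything else is bookkeeping with the $\sigma$-algebra $\A$ and the behaviour of $S^-$ under countable unions (and, through the neighbourhood construction, under a countable intersection of the special form above).
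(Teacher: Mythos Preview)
Your proof is correct and is the standard argument for this result. The paper itself does not give a proof but simply refers to Lemma~18.2 in \cite{AliprantisBorder2006}; what you have written is essentially the argument one finds there, so there is nothing to compare.
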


%========== PROOF
\begin{proof}
We refer the reader to Lemma~18.2 in \cite{AliprantisBorder2006}.
\end{proof}

%========== LEMMA
\begin{lemma}\label{L:INTMEAS}
Suppose that $(A, \A)$ is a measurable space and $\BAN$ a separable metrizable
space. Consider a sequence of weakly measurable set-valued maps $S_n \colon A
\longrightarrow \POWER(\BAN)$ with closed values such that, for each $x \in A$,
there exists a $k\in\N$ with $S_k(x)$ compact. Then the intersection set-valued
map $I \colon A \longrightarrow \POWER(\BAN)$, defined as
\[
	I(x) := \bigcap_{n=1}^\infty S_n(x)
	\quad\text{for all $x \in A$}
\]
is measurable (hence weakly measurable).
\end{lemma}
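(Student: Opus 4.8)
The plan is to reduce the measurability of the countable intersection $I$ to measurability of a single pair of set-valued maps, using the compactness hypothesis to control closures. First I would fix a closed set $W \subset \BAN$ and aim to show $I^-(W) \in \A$. The key geometric observation is that for a point $x \in A$, if $S_k(x)$ is compact for some index $k$, then the decreasing sequence of closed sets $W \cap S_1(x) \cap \cdots \cap S_n(x)$ (for $n \GS k$) lives inside the compact set $S_k(x)$, so the finite intersection property gives
\[
	I(x) \cap W = \bigcap_{n=1}^\infty \big( S_n(x) \cap W \big) \neq \varnothing
	\quad\Longleftrightarrow\quad
	S_n(x) \cap W \neq \varnothing \text{ for every } n.
\]
In other words, $x \in I^-(W)$ if and only if $x \in \bigcap_{n=1}^\infty (S_n \cap W)^-(\BAN) = \bigcap_n (S_n)^-(W)$ — provided we are careful to restrict to indices where compactness is available, or equivalently to first intersect with a fixed $S_k(x)$.

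The cleanest route is to argue pointwise-by-layers. For each $k \in \N$ let $A_k := \{ x \in A : S_k(x) \text{ is compact} \}$; by hypothesis $A = \bigcup_k A_k$. It suffices to show each $A_k \cap I^-(W)$ lies in $\A$ (and that $A_k \in \A$, which follows from weak measurability of $S_k$ together with Lemma~\ref{L:EQUIMEAS}, since on $A_k$ the map $S_k$ is compact-valued — more precisely one intersects with the compact values where they exist). On $A_k$, the above compactness argument shows $I(x) \cap W \neq \varnothing$ iff $S_n(x) \cap (W \cap S_k(x)) \neq \varnothing$ for all $n$, and since each $S_n$ is weakly measurable with closed values in a separable metrizable space, each map $x \mapsto S_n(x) \cap W$ is weakly measurable (this is the standard fact that the intersection of a weakly measurable closed-valued map with a fixed closed set is weakly measurable, valid in separable metrizable spaces — one can also deduce it directly from Castaing-type representations). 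Hence $\{ x : S_n(x) \cap W \neq \varnothing \} = (S_n)^-(W) \in \A$, and
\[
	A_k \cap I^-(W) = A_k \cap \bigcap_{n=1}^\infty (S_n)^-(W) \cap \big( \cdots \big) \in \A,
\]
where the extra intersection handles the restriction to $W \cap S_k(x)$. Taking the countable union over $k$ gives $I^-(W) \in \A$, so $I$ is measurable; weak measurability then follows from Lemma~\ref{L:EQUIMEAS}(1).

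The main obstacle I anticipate is the bookkeeping around \emph{where} compactness holds: the hypothesis only asserts that \emph{some} $S_k(x)$ is compact, with $k$ depending on $x$, so one cannot simply invoke ``compact-valued'' globally. The fix is the layering $A = \bigcup_k A_k$ above, but one must check that $A_k \in \A$; the honest way is to note that on $A_k$ the truncated map $x \mapsto S_k(x)$ is compact-valued and weakly measurable, hence measurable, and that the intersection maps $S_n \cap S_k$ restricted to $A_k$ are then compact-valued and weakly measurable, so one is genuinely in the setting of Lemma~\ref{L:EQUIMEAS}(2). A secondary technical point is justifying that $x \mapsto S_n(x) \cap W$ is weakly measurable; this is standard for closed-valued weakly measurable maps into separable metrizable spaces (e.g.\ via the Castaing representation $S_n(x) = \overline{\{ f_{n,j}(x) : j \in \N \}}$ and the formula $S_n^-(W) = \bigcup_j f_{n,j}^{-1}(W^{(1/m)})$ intersected appropriately), and I would cite \cite{AliprantisBorder2006} or \cite{Kisielewicz1991} for it rather than reprove it.
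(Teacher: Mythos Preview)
The paper does not actually prove this lemma; it simply cites Lemma~18.4 of \cite{AliprantisBorder2006}. Your attempt to supply an argument contains a genuine gap. The central equivalence you assert,
\[
	I(x) \cap W \neq \varnothing
	\quad\Longleftrightarrow\quad
	S_n(x) \cap W \neq \varnothing \text{ for every } n,
\]
is false: take $\BAN = \R$, $S_1(x) = [0,1]$ (compact), $S_2(x) = \{0\}$, $S_3(x) = \{1\}$, $S_n(x) = [0,1]$ for $n \GS 4$, and $W = \R$. Every $S_n(x) \cap W$ is nonempty, yet $I(x) = \varnothing$. Your refined version on $A_k$, with $W$ replaced by $W \cap S_k(x)$, fails on the same example with $k=1$. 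What the finite intersection property actually yields, applied to the nested compact sets $W \cap S_1(x) \cap \cdots \cap S_m(x)$ that you yourself set up, is
\[
	I(x) \cap W \neq \varnothing
	\quad\Longleftrightarrow\quad
	\bigcap_{n=1}^m S_n(x) \cap W \neq \varnothing \text{ for every } m \in \N,
\]
so that $I^-(W) = \bigcap_m T_m^-(W)$ with $T_m := S_1 \cap \cdots \cap S_m$. But now you must show each $T_m^-(W) \in \A$, and this is precisely the nontrivial content: since the index $k$ with $S_k(x)$ compact varies with $x$, no single $T_m$ is globally compact-valued, and Lemma~\ref{L:EQUIMEAS}(2) does not apply directly. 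A separate issue is your layering $A = \bigcup_k A_k$ with $A_k := \{x : S_k(x) \text{ compact}\}$: nothing in the hypotheses guarantees $A_k \in \A$, and weak measurability of $S_k$ does not provide it, since Lemma~\ref{L:EQUIMEAS} concerns preimages of subsets of $\BAN$, not the locus where values happen to be compact.
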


%========== PROOF
\begin{proof}
We refer the reader to Lemma~18.4 in \cite{AliprantisBorder2006}.
\end{proof}

%========== DEFINITION
\begin{definition}[Polish Space]%\label{D:POLISH}
A topological space $\SBB$ is called \emph{completely metrizable} if there
exists a distance $d$ that is compatible with the topology s.t.\ $(\SBB, d)$ is
complete. A topological space is called \emph{Polish} if it is separable and
completely metrizable.
\end{definition}
		
Equivalently, a topological space is Polish if it has a countable dense subset
and is homeomorphic to a complete metric space. Notice that Polish-ness only
requires the existence of at least one complete distance compatible with the
given topology. There may be other distances that are not complete. The unit
interval $(0,1)$ in $\R$, for instance, which is open in the usual topology
(therefore not complete), is Polish because it is homeomorphic to $\R$ whose
usual metric is complete.
		
% %========== THEOREM
% \begin{theorem}[Kuratowski--Ryll-Nardzewski Selection Theorem]\label{T:KRNST}
% A weakly measurable set-valued map with nonempty closed values from a measurable
% space into a Polish space (see Definition~\ref{D:POLISH}) admits a measurable
% selector.
% \end{theorem}

% %========== PROOF
% \begin{proof}
% We refer the reader to Theorem~18.13 in \cite{AliprantisBorder2006}.
% \end{proof}

%========== THEOREM
\begin{theorem}[Measurable Maximum Theorem]\label{T:MMT}
Let $(A,\A)$ be a measurable space and $\BAN$ a separable metrizable space. Let
$\Gamma \colon A \longrightarrow \POWER(\BAN)$ be a weakly measurable set-valued
map with nonempty compact values, and suppose that $f \colon A \times \BAN
\longrightarrow \R$ is a Carath\'{e}odory function. Define the value function $m
\colon A \longrightarrow \R$ by
\[
	m(x) := \max_{\gamma\in S(x)} f(x,\gamma)
	\quad\text{for all $x \in A$,}
\]
and the set-valued map $\Gamma_* \colon A \longrightarrow \POWER(\BAN)$ of
maximizers by
\[
	\Gamma_*(x) := \{ \gamma\in \Gamma(x) \colon f(x,\gamma) = m(x) \} 
	\quad\text{for all $x \in A$.}
\]
Then 
\begin{enumerate}

\item the value function $m$ is measurable;

\item the $\mathrm{argmax}$ function $\Gamma_*$ has nonempty and compact values;

\item the $\mathrm{argmax}$ function $\Gamma_*$ is measurable and admits a
measurable selector.

\end{enumerate}
\end{theorem}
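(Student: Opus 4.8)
The plan is to reduce everything to the Kuratowski--Ryll-Nardzewski measurable selection theorem by showing that $\Gamma_*$ is a measurable, nonempty-valued, closed-valued (in fact compact-valued) set-valued map into a separable metrizable space. I would organize the argument around the value function $m$ first, then the structure of $\Gamma_*$, and finally the selection.

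First I would prove that $m$ is measurable. Since $\Gamma$ has nonempty compact values and $f(x,\cdot)$ is continuous (the Carath\'eodory assumption), the maximum defining $m(x)$ is attained, so $m$ is well-defined and $\Gamma_*(x)\neq\varnothing$. To get measurability of $m$, the natural route is to approximate $\Gamma$ by a Castaing-type representation: because $\Gamma$ is weakly measurable with nonempty closed values into a separable metrizable space, there is a countable family of measurable selectors $\{g_n\colon A\to\BAN\}$ with $\Gamma(x)=\overline{\{g_n(x):n\in\N\}}$ for every $x$. Then, using continuity of $f(x,\cdot)$ and density, one checks $m(x)=\sup_{n}f(x,g_n(x))$. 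Each $x\mapsto f(x,g_n(x))$ is measurable (composition of a Carath\'eodory function with a measurable map is measurable, by the standard argument approximating $g_n$ by simple functions and invoking continuity in the second variable), hence $m$, being a countable supremum of measurable functions, is measurable. This settles (1).

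Next, for (2), I would note $\Gamma_*(x)=\{\gamma\in\Gamma(x):f(x,\gamma)\GS m(x)\}$, which is a closed subset of the compact set $\Gamma(x)$ by continuity of $f(x,\cdot)$, hence compact; and it is nonempty because the max is attained. For measurability of $\Gamma_*$ in (3), I would verify weak measurability directly: for open $V\subset\BAN$, I want $\Gamma_*^-(V)=\{x:\Gamma_*(x)\cap V\neq\varnothing\}\in\A$. Using the Castaing representation of $\Gamma$ and continuity, $\Gamma_*(x)\cap V\neq\varnothing$ iff $\sup\{f(x,g_n(x)):g_n(x)\in V\}=m(x)$ and this supremum is attained on the relevant closure; more robustly, one can write $\Gamma_*$ as a countable intersection of the form $\Gamma_*(x)=\bigcap_{k}\{\gamma\in\Gamma(x):f(x,\gamma)> m(x)-1/k\}$ (closures) and apply Lemma~\ref{L:INTMEAS}, since $\Gamma$ already has a compact value, together with the fact that the superlevel sets $\{\gamma\in\Gamma(x):f(x,\gamma)>m(x)-1/k\}$ are weakly measurable (their weak inverses of open sets are expressible through the measurable functions $m$ and $f(\cdot,g_n(\cdot))$). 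Once $\Gamma_*$ is measurable with nonempty compact (hence closed) values, Lemma~\ref{L:EQUIMEAS} gives measurability in both senses, and the Kuratowski--Ryll-Nardzewski theorem yields a measurable selector.

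The main obstacle I anticipate is the bookkeeping in the measurability of $\Gamma_*$: one must be careful that intersecting the superlevel sets really recovers $\Gamma_*$ (this uses that $\Gamma(x)$ is compact and $f(x,\cdot)$ continuous, so the decreasing intersection of the nonempty compact superlevel sets is exactly the argmax), and that each superlevel set-valued map is weakly measurable, which requires combining the measurability of $m$ from step~(1) with a Castaing representation of $\Gamma$ and the joint measurability structure of the Carath\'eodory function $f$. The value-function measurability and the compactness in (2) are comparatively routine; the selection step is a black-box citation. I would therefore present steps (1) and (2) quickly and devote most of the detail to the weak measurability of $\Gamma_*$ before invoking the selection theorem.
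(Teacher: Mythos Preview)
The paper does not actually prove this statement; its entire proof reads ``We refer the reader to Theorem~18.19 in \cite{AliprantisBorder2006}.'' Your sketch is essentially a reconstruction of the standard argument one finds in that reference: measurability of $m$ via a Castaing representation of $\Gamma$, compactness and nonemptiness of $\Gamma_*(x)$ as a closed subset of the compact set $\Gamma(x)$, measurability of $\Gamma_*$ by writing it as a countable intersection of closed-valued superlevel correspondences and invoking Lemma~\ref{L:INTMEAS}, and finally Kuratowski--Ryll-Nardzewski for the selector. So your approach is correct and is, in spirit, the proof the paper is outsourcing.

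One point deserves care. Both the Castaing representation theorem and the Kuratowski--Ryll-Nardzewski selection theorem are normally stated for a \emph{Polish} target space, whereas the hypothesis here is only that $\BAN$ is separable metrizable. This gap is easily closed using the compact-valuedness of $\Gamma$: embed $\BAN$ into its metric completion $\hat{\BAN}$ (which is Polish); compact subsets of $\BAN$ are complete, hence closed in $\hat{\BAN}$; weak measurability of $\Gamma$ as a map into $\hat{\BAN}$ follows from weak measurability into $\BAN$; now run your entire argument in $\hat{\BAN}$ and observe that every selector lands in $\Gamma(x)\subset\BAN$ automatically. You should state this reduction explicitly, since as written your invocation of Castaing and KRN is not fully justified by the hypotheses of the theorem. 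Apart from this, the bookkeeping you flag for the weak measurability of the superlevel maps is routine once $m$ is known to be measurable: the function $(x,\gamma)\mapsto f(x,\gamma)-m(x)+1/k$ is again Carath\'eodory, and its nonnegativity set intersected with $\Gamma$ gives a weakly measurable closed-valued correspondence.
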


%========== PROOF
\begin{proof}
We refer the reader to Theorem~18.19 in \cite{AliprantisBorder2006}.
\end{proof}

%%%%%%%%%%%%%%%%%%%%%%%%%%%%%%%%%%%%%%%%%%%%%%%%%%%%%%%%%%%%%%%%%%%%%%%%%%%%%%%%
%%%%%%%%%%%%%%%%%%%%%%%%%%%%%%%%%%%%%%%%%%%%%%%%%%%%%%%%%%%%%%%%%%%%% Uniqueness

\subsection{Uniqueness}

We discuss methods to separate objects in metric spaces.

%========== DEFINITION
\begin{definition}
We say that a Banach space $\BAN$ has the \emph{Radon-Nikod\'{y}m property}
(abbreviated RNP) if the fundamental theorem of calculus holds for $\BAN$-valued
maps: If $f \colon [a,b] \longrightarrow \BAN$ is absolutely continuous, then
there exists a Bochner integrable function $g \colon [a,b]\longrightarrow \BAN$
with the property that
\[
	f(t) = f(a) + \int_a^t g(s) \,ds
	\quad\text{for all $t \in [a,b]$.}
\]
Then $f$ is differentiable for a.e.\ $t \in [a,b]$ with derivative $f' = g$.
\end{definition}

Recall that a function $f \colon [a,b] \longrightarrow \BAN$ is called
\emph{absolutely continuous} if for every $\EPS>0$ there exists $\delta>0$ such
that $\sum_i \| f(b_i)-f(a_i) \| < \EPS$ for every finite collection $\{(a_i,
b_i)\}$ of disjoint intervals in $[a,b]$ with $\sum_i(b_i-a_i) < \delta$. We say
that $f$ is \emph{Lipschitz continuous} if there exists $M$ such that $\|
f(t)-f(s) \| \LS M |t-s|$ for all $s,t \in [a,b]$. Clearly, any Lipschitz
continuous function is absolutely continuous.

The following result is concerned with uniqueness of the inverse Laplace
transform for Banach space-valued functions. It is known as Lerch's theorem. 

%========== LEMMA
\begin{lemma}\label{L:LERCH}
Suppose that $\BAN$ is a Banach space with the Radon-Nikod\'{y}m property. Let
$\LIP_0(\R_+; \BAN)$ be the space of Lipschitz continuous functions $F \colon
\R_+ \longrightarrow \BAN$ such that $F(0) = 0$. Here $\R_+ := [0,\infty)$. If a
function $F \in \LIP_0(\R_+; \BAN)$ satisfies
\begin{equation}
	\int_0^\infty \exp(-\mu_n t) \,dF(t) = 0
	\quad\text{for all $n\in\N$}
\label{E:LST}
\end{equation}
(in the sense of the Riemann-Stieltjes integral), for a sequence of distinct
real or complex numbers $\mu_n$ such that $\REAL\mu_n \GS \mu > 0$ for some
$\mu>0$ and
\begin{equation}
	\sum_{n=1}^N \bigg( 1-\frac{|\mu_n-1|}{|\mu_n+1|} \bigg)
		\longrightarrow \infty
	\quad\text{as $N\to\infty$,}
\label{E:SUMCOND}
\end{equation}
then $F(t) = 0$ for all $t\in\R_+$.
\end{lemma}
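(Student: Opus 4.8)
The plan is to reduce Lerch's theorem for Banach-space-valued functions to the classical scalar case via the Hahn--Banach theorem, using the Radon--Nikod\'ym property to make sense of the argument. First I would observe that, since $F \in \LIP_0(\R_+;\BAN)$ is Lipschitz continuous with $F(0)=0$ and $\BAN$ has the RNP, there exists a Bochner integrable (in fact essentially bounded) function $g \colon \R_+ \longrightarrow \BAN$ with $F(t) = \int_0^t g(s)\,ds$ and $\|g\|_\infty \LS M$, where $M$ is the Lipschitz constant. The Riemann--Stieltjes integral in \eqref{E:LST} can then be rewritten as a Bochner--Lebesgue integral: $\int_0^\infty \exp(-\mu_n t)\,dF(t) = \int_0^\infty \exp(-\mu_n t) g(t)\,dt$, so the hypothesis becomes that the vector-valued Laplace transform of $g$ vanishes at all the points $\mu_n$.

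Next I would apply a continuous linear functional. Fix an arbitrary $\varphi$ in the dual space $\BAN^*$. By linearity and continuity of $\varphi$ (which commutes with the Bochner integral), the scalar function $h_\varphi(t) := \varphi(g(t))$ lies in $\L^\infty(\R_+)$ and satisfies $\int_0^\infty \exp(-\mu_n t) h_\varphi(t)\,dt = \varphi\big(\int_0^\infty \exp(-\mu_n t) g(t)\,dt\big) = 0$ for all $n\in\N$. Equivalently, setting $G_\varphi(t) := \varphi(F(t)) = \int_0^t h_\varphi(s)\,ds$, we have $G_\varphi \in \LIP_0(\R_+;\R)$ (or $\mathbb{C}$) with vanishing Laplace--Stieltjes transform at each $\mu_n$.

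The core of the argument is then the classical scalar Lerch/M\"untz-type uniqueness theorem: if a bounded (or merely Laplace-transformable) scalar function has Laplace transform vanishing along a sequence $\{\mu_n\}$ with $\REAL\mu_n \GS \mu > 0$ and satisfying the M\"untz--Sz\'asz divergence condition \eqref{E:SUMCOND}, then the function is zero a.e. The standard route is to substitute $w = e^{-t}$, turning $\int_0^\infty e^{-\mu_n t} h_\varphi(t)\,dt$ into $\int_0^1 w^{\mu_n - 1} \tilde h_\varphi(w)\,dw$ for a suitable transformed function on $(0,1)$; condition \eqref{E:SUMCOND} is exactly the M\"untz condition guaranteeing that $\{w^{\mu_n - 1}\}$ spans a dense subspace, so the vanishing of all these moments forces $\tilde h_\varphi = 0$, hence $h_\varphi = 0$ a.e., hence $G_\varphi \equiv 0$ on $\R_+$. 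I would either cite this scalar result or sketch the M\"untz--Sz\'asz density argument. Concluding: for every $\varphi \in \BAN^*$ and every $t \in \R_+$ we get $\varphi(F(t)) = G_\varphi(t) = 0$; since $\BAN^*$ separates points of $\BAN$ by Hahn--Banach, this yields $F(t) = 0$ for all $t \in \R_+$.

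The main obstacle I anticipate is not the functional-analytic reduction, which is routine once the RNP is invoked, but making the scalar step fully rigorous: one must be careful that the M\"untz--Sz\'asz condition in the form \eqref{E:SUMCOND} — which is stated with the Blaschke-type factors $1 - |\mu_n - 1|/|\mu_n + 1|$ appropriate for the half-plane $\REAL\mu > 0$ mapped to the disc — genuinely matches the density hypothesis needed, and that complex-valued $\mu_n$ (with $\REAL\mu_n \GS \mu$) are handled, not just real ones. A secondary technical point is justifying the interchange of $\varphi$ with the Bochner integral and the identification of the Riemann--Stieltjes integral $\int_0^\infty e^{-\mu_n t}\,dF(t)$ with $\int_0^\infty e^{-\mu_n t} g(t)\,dt$ for a merely absolutely continuous (Lipschitz) $F$; both follow from standard properties of the Bochner integral and the RNP, but should be stated explicitly.
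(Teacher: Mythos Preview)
Your proposal is correct and takes a genuinely different route from the paper's proof. The paper does not reduce to the scalar case via Hahn--Banach; instead it invokes an isometric isomorphism between $\LIP_0(\R_+;\BAN)$ and the space of bounded linear operators $T_F \colon \L^1(\R_+)\to\BAN$ (Riesz--Stieltjes representation), given by $T_F g = \int_0^\infty g(t)\,dF(t)$ on continuous $g$ and $T_F\ONE_{[0,t]} = F(t)$, and then cites the fact that the family $\{\exp(-\mu_n t)\}$ is \emph{total} in $\L^1(\R_+)$ under condition~\eqref{E:SUMCOND} (referring to \cite{BaeumerNeubrander1994}); hence $T_F=0$ on a total set forces $T_F=0$ and so $F\equiv 0$. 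Your approach is more hands-on: it makes the role of the Radon--Nikod\'ym property explicit (to produce the Bochner density $g$), whereas the paper's operator-theoretic argument in fact never uses the RNP---the representation $\LIP_0(\R_+;\BAN)\cong\mathcal{L}(\L^1(\R_+),\BAN)$ holds for any Banach space. Both arguments rest on the same analytic core (completeness of the exponential system in $\L^1(\R_+)$ under the Blaschke-type divergence~\eqref{E:SUMCOND}); the paper dispatches it in a single stroke by citing totality, while you unravel it componentwise and correctly flag the scalar M\"untz--Sz\'asz step as the place requiring care.
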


%========== REMARK
\begin{remark}\label{R:RNP}
If $\BAN$ has the Radon-Nikod\'{y}m property and $F \in \LIP_0(\R_+; \BAN)$,
then
\[
	\int_0^\infty g(t) \,dF(t) = \int_0^\infty g(t) F'(t) \,dt
\]
for all $g\in \L^1(\R_+)$ continuous. In this case, the Laplace-Stieltjes
transform \eqref{E:LST} reduces to the Laplace transform. Otherwise, it is a
proper generalization. Separable dual spaces and reflexive spaces have the
Radon–Nikod\'{y}m property; see \cite{Bourgin1983}.
\end{remark}

%========== PROOF
\begin{proof}[Proof of Lemma~\ref{L:LERCH}] There exists an isometric
isomorphism between $\LIP_0(\R_+; \BAN)$ and the space of bounded linear maps
from $\L^1(\R_+)$ to $\BAN$ (Riesz-Stieltjes representation): To any $F \in
\LIP_0(\R_+; \BAN)$ associate $T_F \colon \L^1(\R_+) \longrightarrow \BAN$ such
that
\[
	T_Fg := \int_0^\infty g(t) \,dF(t)
	\quad\text{for all $g \in \L^1(\R_+)$ continuous}
\]
and $T_F \ONE_{[0,t]} := F(t)$ for all $t\in \R_+$. By density, the map $T_F$ is
uniquely determined by these assumptions. On the other hand, the family of
functions
\[
	S := \{ \exp(-\lambda_n t) \colon n\in\N \},
\]
with complex numbers $\lambda_n$ as above, is \emph{total} in $\L^1(\R_+)$,
which means precisely that the only bounded linear map on $\L^1(\R_+)$ that
vanishes on $S$ is the zero functional. We refer the reader to Corollary~1.3 in
\cite{BaeumerNeubrander1994} for additional information.
\end{proof}

We now turn to separating classes of functions on topological spaces.

%========== DEFINITION
\begin{definition}
Let $(X,\tau)$ be a topological space and $\SEP$ a collection of $\R$-valued
Borel measurable functions on $X$. We say that $\SEP$ \emph{separates points} if
for any $x,y \in X$ with $x\neq y$ there exists a $g \in \SEP$ with $g(x)\neq
g(y)$. We say that $\SEP$ \emph{strongly separates points} if for any $x\in X$
and any neighborhood $O_x$ of $x$ there exists a finite collection $\{g_1,
\ldots, g_k \} \subset \SEP$ such that $\inf_{y \not\in O_x} \max_{1\LS i\LS k}
|g_i(x)-g_i(y)| > 0$.
\end{definition}

%========== LEMMA
\begin{lemma}\label{L:SSP}
Let $(X,\tau)$ be a topological space with countable basis and suppose that a
subset $\SEP \subset \C(X;\R)$ strongly separates points. Then there exists a
countable collection $\{g_i\}_{i\in\N} \subset \SEP$ that also strongly
separates points. Moreover, this collection can be taken closed under either
multiplication or addition if $\SEP$ is.
\end{lemma}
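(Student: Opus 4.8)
The plan is to exploit the countable basis $\{U_j\}_{j\in\N}$ of $(X,\tau)$ to extract, for each basic neighborhood, a finite subfamily of $\SEP$ witnessing strong separation, and then to take the union of all these finite subfamilies. First I would fix a countable basis $\{U_j\}_{j\in\N}$. For each pair $(x,j)$ with $x\in U_j$, strong separation gives a finite collection $\{g_1,\dots,g_k\}\subset\SEP$ with $\inf_{y\notin U_j}\max_i|g_i(x)-g_i(y)|>0$. The key observation is that this property is \emph{open in $x$}: if it holds at $x$ with a given finite collection and a given positive gap $2\delta$, then by continuity of the finitely many $g_i$ it holds at every $x'$ in a sufficiently small neighborhood of $x$ (still inside $U_j$) with gap $\delta$, using the same collection. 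Hence for fixed $j$ the sets of points $x\in U_j$ that are ``handled'' by some fixed finite subfamily form an open cover of $U_j$; by second countability (Lindel\"of) of the open subspace $U_j$, countably many such finite subfamilies already cover $U_j$. Taking the union over $j\in\N$ of all these countably many finite subfamilies yields a countable set $\{g_i\}_{i\in\N}\subset\SEP$.

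Next I would check that $\{g_i\}_{i\in\N}$ strongly separates points. Given $x\in X$ and a neighborhood $O_x$, pick a basic $U_j$ with $x\in U_j\subset O_x$; by construction there is a finite subfamily $F\subset\{g_i\}$ that handles $x$ relative to $U_j$, i.e.\ $\inf_{y\notin U_j}\max_{g\in F}|g(x)-g(y)|>0$. Since $y\notin O_x$ implies $y\notin U_j$, the same infimum over $y\notin O_x$ is bounded below by this positive number, so $F$ witnesses strong separation of $x$ from the complement of $O_x$. This gives the first assertion.

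For the closure statement, suppose $\SEP$ is closed under multiplication. Starting from the countable $\{g_i\}$ just produced, let $\{h_i\}_{i\in\N}$ be an enumeration of all finite products of elements of $\{g_i\}$ (there are only countably many such finite products); this family is contained in $\SEP$, contains $\{g_i\}$, is closed under multiplication, and still strongly separates points since it contains a strongly separating set. The case of closure under addition is identical, enumerating all finite sums instead. The main obstacle I anticipate is the openness step: one must be slightly careful that the neighborhood on which a fixed finite subfamily continues to work stays inside the same $U_j$ (so that the infimum is taken over the correct set $y\notin U_j$), and that one halves the gap to absorb the continuity error; once that is set up cleanly, the Lindel\"of argument and the closure bookkeeping are routine.
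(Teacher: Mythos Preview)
Your argument is correct: the openness-in-$x$ step works exactly as you describe (halving the gap absorbs the continuity error, and intersecting with $U_j$ keeps the infimum over the right complement $X\setminus U_j$), the Lindel\"of extraction on each basic open set $U_j$ is valid since second-countable spaces are hereditarily Lindel\"of, and the closure-under-products (or sums) bookkeeping is routine. The paper itself gives no proof of this lemma, deferring instead to Lemma~2 of \cite{BlountKouritzin2010}, so there is no in-paper approach to compare against; your direct second-countability/Lindel\"of argument is the natural one and would serve as a self-contained replacement for that citation.
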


%========== PROOF
\begin{proof}
We refer the reader to Lemma~2 in \cite{BlountKouritzin2010}.
\end{proof}

%========== THEOREM
\begin{theorem}\label{T:SEPA}
Let $(X,\tau)$ be a topological space and $\SEP$ a countable collection of
$\R$-valued Borel measurable functions of $X$ that is closed under
multiplication and strongly separates points. If $\mu$ is any Borel probability
measure on $\SBB$, then
\[
	\bigg( \int_\SBB g \,d\mu = 0 \quad\text{for all $g\in \SEP$} \bigg)
	\quad
	\Longrightarrow
	\quad
	\mu = 0.
\]
\end{theorem}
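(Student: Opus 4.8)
The plan is to reduce the statement to the classical Stone–Weierstrass theorem in a slightly unusual form. First I would observe that since $\SEP$ is countable, closed under multiplication, and strongly separates points of $X$, the algebra $\A$ generated by $\SEP$ together with the constant function $1$ (i.e.\ all finite linear combinations of finite products of elements of $\SEP$, including the empty product) is a subalgebra of bounded — no wait, the functions need not be bounded; rather I should work locally. The key point is that strong separation of points, as defined just above, is exactly the hypothesis one needs to run the argument that $\A$ separates points in the strong sense required to approximate indicator-like functions. So the crux is: if $\int_X g\,d\mu = 0$ for every $g\in\SEP$, then because $\A$ is spanned by products of such $g$ and these products again lie in the span of $\SEP$ (here is where closure under multiplication is used), we get $\int_X p\,d\mu = 0$ for every $p\in\A$ as well — at least for every $p$ in $\A$ that has no constant term. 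One must be slightly careful with the constant function: if $1\in\A$ then $\mu(X)=0$ would follow immediately, so presumably the intended conclusion $\mu=0$ for $\mu$ a Borel \emph{probability} measure should be read as a contradiction unless $X=\varnothing$; I suspect the statement is really that $\mu$ is a signed (or finite) Borel measure, or that the conclusion is "$\mu$ is not a probability measure," and I would flag this. Assuming the intended reading is that $\mu$ is a finite signed Borel measure with $\int g\,d\mu=0$ for all $g\in\SEP$, the target is $\mu=0$.

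The main steps I would carry out are as follows. Step one: extend the vanishing from $\SEP$ to the algebra $\A_0$ of polynomials in elements of $\SEP$ with zero constant term, using linearity of the integral and the fact that a product $g_{i_1}\cdots g_{i_k}$ of elements of $\SEP$ is again a finite linear combination of elements of $\SEP$ (or of $\SEP\cup\{1\}$) — this is precisely what "closed under multiplication" buys us, modulo bookkeeping about the constant term. Step two: fix an arbitrary point $x_0\in X$ and use strong separation to find, for each neighborhood $O$ of $x_0$, functions $g_1,\dots,g_k\in\SEP$ with $\delta := \inf_{y\notin O}\max_i |g_i(x_0)-g_i(y)| > 0$. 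From these build the single continuous function $\varphi(y) := \max_i |g_i(x_0) - g_i(y)|^2 = \max_i (g_i(x_0)-g_i(y))^2$, which vanishes at $x_0$, is $\ge\delta^2$ off $O$, and — crucially — lies (after expanding each square and the max via iterated $\min$/$\max$, or by a further Stone–Weierstrass-type approximation) in the uniform closure of $\A_0$ on compact sets. Step three: use $\varphi$ (or rather functions of the form $\max(1-n\varphi,0)$ or $e^{-n\varphi}$) to construct an approximate identity at $x_0$ and conclude $\mu=0$ locally, hence globally by a standard covering/$\sigma$-additivity argument.

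An honest alternative, probably cleaner, is to invoke a packaged result: a countable family that strongly separates points and is closed under multiplication generates, by Lemma~\ref{L:SSP} and the structure theory around it, a topology-compatible "presentation" of $X$; there is a known theorem (this is essentially the content of the Blount–Kouritzin setup cited for Lemma~\ref{L:SSP}) that a finite Borel measure on such an $X$ is determined by its integrals against such a family. So the shortest route is to cite the relevant statement from \cite{BlountKouritzin2010} or \cite{BlountKouritzin2010}-adjacent literature and note that the hypotheses here — $(X,\tau)$ topological, $\SEP$ countable, closed under multiplication, strongly separating — are exactly theirs. In a self-contained write-up, though, I would run the three steps above.

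The hard part will be Step two: legitimately placing the bump function $\varphi$ (and then a genuine approximate-identity sequence) into the uniform closure of $\A_0$ without accidentally needing the constant function or needing the $g_i$ to be bounded. The $\max$ and the squares are fine inside an algebra-with-Stone–Weierstrass argument only after one restricts to a compact set $K\ni x_0$ and works with the closed subalgebra of $C(K)$; one then has to patch the local conclusions "$\mu$ restricted to small neighborhoods of each point is zero" into "$\mu=0$" using that $X$ has a countable basis (so $\mu$ is determined by its values on a countable family of basic open sets, each of which is exhausted from inside by such neighborhoods). Keeping track of whether the constant is available — and hence whether "probability measure" in the statement is a typo for "finite signed measure" or whether the theorem is being used contrapositively — is the one genuine subtlety; everything else is routine Stone–Weierstrass and measure-theoretic glue.
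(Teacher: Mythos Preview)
Your ``cleaner alternative'' is exactly what the paper does: its entire proof is the single line ``We refer the reader to Theorem~11(c) in \cite{BlountKouritzin2010}.'' So on that count you matched the paper precisely.

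Your instinct about the statement is also well-placed. As written --- $\mu$ a Borel \emph{probability} measure with $\int g\,d\mu=0$ for all $g\in\SEP$ forces $\mu=0$ --- the conclusion is vacuously that no such probability measure exists. The Blount--Kouritzin result being invoked is really that such a family is \emph{measure-determining} (two finite Borel measures agreeing on $\SEP$ coincide), which is equivalent to the vanishing statement for finite \emph{signed} measures. In the paper's only application (Remark~\ref{R:FAMILY} and Step~2 of the proof of Theorem~\ref{T:SELECTION}) the theorem is not actually used in this integral form at all; what is used is simply that strong separation of points gives $\gamma(t)=\eta(t)$ from $g_i(\gamma(t))=g_i(\eta(t))$ for all $i$. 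So the imprecision in the statement is harmless for the paper, but you were right to flag it.

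Regarding your self-contained sketch: Step~one is fine once you note that ``closed under multiplication'' literally means $g_1g_2\in\SEP$, so every monomial $g_{i_1}\cdots g_{i_k}$ is already an element of $\SEP$, not merely a linear combination; the extension to $\A_0$ is then immediate by linearity. Step~two is the real work, and you correctly identify the difficulties (boundedness, availability of constants, patching local to global). Carrying this out rigorously without additional hypotheses on $X$ (such as local compactness or a countable base) is genuinely delicate --- which is precisely why the paper, and Blount--Kouritzin, package it as a citable theorem rather than rederive it. Your outline is the right shape for a self-contained proof, but completing it would essentially reprove the cited result.
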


%========== PROOF
\begin{proof}
We refer the reader to Theorem~11(c) in \cite{BlountKouritzin2010}.
\end{proof}

%========== REMARK
\begin{remark}\label{R:FAMILY}
We can apply Lemma~\ref{L:SSP} to a Polish space $(\SBB, d)$, with $\SEP$ the
family of Lipschitz continuous functions with bounded support. Notice that the
topology of a Polish space (which is a separable metric space) does have a
countable basis. Moreover, $\SEP$ strongly separates points on $\SBB$ since the
set
\[
	\Big\{ \big( 1-kd(\cdot, y) \big)_+ \colon y\in \SBB, k\in\N \Big\}
\]
belongs to $\SEP$ and strongly separates points. Then there exists a
countable collection $\{ g_i \}_{i\in \N}$ of elements of $\SEP$ that strongly
separates points. It follows that
\[
	\bigg( \int_\SBB g_i \,d\mu = 0 \quad\text{for all $i\in\N$} \bigg)
	\quad
	\Longrightarrow
	\quad
	\mu = 0,
\]
where $\mu$ is any Borel probability measure on $\SBB$; see
Theorem~\ref{T:SEPA}. Our choice of $\SEP$ is motivated by the theory of
continuity equations on metric spaces, where the satisfaction of the transport
ODE is defined by testing against a class of Lipschitz continuous functions. We
refer the reader to \cite{AmbrosioTrevisan2014} for further information.
\end{remark}

%%%%%%%%%%%%%%%%%%%%%%%%%%%%%%%%%%%%%%%%%%%%%%%%%%%%%%%%%%%%%%%%%%%%%%%%%%%%%%%%
%%%%%%%%%%%%%%%%%%%%%%%%%%%%%%%%%%%%%%%%%%%%%%%%%%%%%%%%%%%%%%%%%%%%%%%%%%%%%%%%
%%%%%%%%%%%%%%%%%%%%%%%%%%%%%%%%%%%%%%%%%%%%%%%%%%%%%%%%%%%%%%%%%%%%%%%%%%%%%%%%
%%%%%%%%%%%%%%%%%%%%%%%%%%%%%%%%%%%%%%%%%%%%%%%%%%%%%%%%%%%%%%%%%%%%%%%%%%%%%%%%
%%%%%%%%%%%%%%%%%%%%%%%%%%%%%%%%%%%%%%%%%%%%%%%%%%%%%% Measurable Semi-Processes

\section{Measurable Semi-Processes}\label{S:MSP}

Starting with the seminal work by DiPerna-Lions \cite{DiPernaLions1989}, a lot
of effort has been devoted to identifying successively weaker conditions that
ensure the existence and, in particular, the \emph{uniqueness} of (regular)
flows for \eqref{E:DIFFEQN}. These conditions typically come in the form of
regularity assumptions on the velocity field $\VELO$.

Here we will explore a different approach. We make only minimal assumptions on
the regularity of the velocity field $\VELO$ that ensure existence of solutions
to \eqref{E:DIFFEQN}. Since then uniqueness is typically lost, in order to be
able to anyway define a flow map $\FLOW$, we \emph{select} among all solutions
of \eqref{E:DIFFEQN} a suitable family of integral curves, one for each starting
point $x\in\Omega$. While the resulting flow  $\FLOW$ may not be unique, it will
still have the crucial semi-group property. Using the push-forward formula
\eqref{E:PUSHF} we obtain a solution of the continuity equation
\eqref{E:CONTEQN}. Again we do not expect uniqueness of solutions to
\eqref{E:CONTEQN}. Instead our approach amounts to \emph{selecting} a suitable
one (namely one that has associated to it a flow map with good properties). This
presents an alternative to the regularity-based approach to transport problems.

Notice that the idea of selecting a suitable solution among many possible ones
is at the heart of the theory of hyperbolic conservation laws. Recent
breakthroughs by De Lellis, Sz\'{e}kelyhidi, and others have conclusively
demonstrated that uniqueness cannot be expected for some fundamental physical
models, such as the Euler/Navier-Stokes equations of gas dynamics; see
\cites{DeLellisSzkelyhidi2009, DeLellisSzkelyhidi2010, BuckmasterVicol2019}.
This has reinvigorated the quest for suitable \emph{entropy conditions} that
among all weak solutions of the equations would pick the one with physical
relevance. The scope of this paper is much narrower since we only consider
transport equations. Moreover, our selection principle is different insofar as
the procedure is motivated purely by mathematical considerations.

%%%%%%%%%%%%%%%%%%%%%%%%%%%%%%%%%%%%%%%%%%%%%%%%%%%%%%%%%%%%%%%%%%%%%%%%%%%%%%%%
%%%%%%%%%%%%%%%%%%%%%%%%%%%%%%%%%%%%%%%%%%%%%%%%%%%%%%%% Differential Inclusions

\subsection{Differential Inclusions}
\label{SS:DI}

Let $\Omega \subset \R^d$ be closed. In the following, open/closed subsets of
and neighborhoods in $\Omega$ will always be understood with respect to the
relative topology. Let us first clarify our solution concept for
\eqref{E:DIFFEQN}.

%========== DEFINITION
\begin{definition}\label{D:ACSOL}
An \emph{a.c.\ solution} (also called Carath\'{e}odory solution) of
\eqref{E:DIFFEQN} is an absolutely continuous map $\gamma \colon [0,T]
\longrightarrow \Omega$ with the property that
\[
	\gamma(t) = x + \int_0^t \VELO\big( s,\gamma(s) \big) \,ds
	\quad\text{for all $t \in [0,T]$.}
\]
\end{definition}

By Cauchy-Peano theorem, if the velocity field $\VELO$ is continuous in space,
then a.c.\ solutions of \eqref{E:DIFFEQN} exist for $T$ sufficiently small. But
generally there is no uniqueness. A standard counterexample is the following
initial value problem
\[
	\dot{\gamma} = 2 \gamma^{1/2},
	\quad
	\gamma(0) = 0,
\]
which has \emph{infinitely many} solutions of the form
\[
	\gamma(t) = \begin{cases}
			0 & \text{if $t < c$}
\\
			(t-c)^2 & \text{if $t \GS c$}
		\end{cases}
	\quad\text{for all $t\in[0,T]$,}
\]
for any $c\GS 0$. If $\VELO$ is discontinuous in space, then even existence for
\eqref{E:DIFFEQN} may no longer be given, as the following example demonstrates:
If the velocity field
\[
	\VELO(x) := \begin{cases}
			+1 & \text{for $x \LS 0$}
\\
			-1 & \text{for $x > 0$}
		\end{cases}
\]
then there exists no solution of \eqref{E:DIFFEQN} for initial data $x = 0$.

\medskip

In order to have a robust existence theory at our disposal, instead of the
differential equation \eqref{E:DIFFEQN} we will consider \emph{differential
inclusions} of the form
\begin{equation}
	\dot{\gamma}(t) \in F\big( t,\gamma(t) \big)
	\quad
	\text{for $s\LS t\LS T$,}
	\quad
	\gamma(s) = x
\label{E:DIFFINCL}
\end{equation}
with $(s,x) \in [0,T]\times\Omega$ and $F \colon [s,T] \times \Omega
\longrightarrow \POWER(\R^d)$; see Definition~\ref{D:SVM}.

%========== DEFINITION
\begin{definition}
An \emph{a.c.\ solution} (also called Carath\'{e}odory solution) of
\eqref{E:DIFFINCL} is an absolutely continuous map $\gamma \colon [s,T]
\longrightarrow \Omega$ with the property that
\[
	\dot\gamma(t) \in F\big( t,\gamma(t) \big) 
	\quad\text{for a.e.\ $t \in [s,T]$.}
\]
\end{definition}

In order for a.c.\ solutions of \eqref{E:DIFFINCL} to remain inside of $\Omega$
it is necessary that the velocity field at the boundary does not point into the
complement of $\Omega$.

%========== DEFINITION
\begin{definition}\label{D:TANCONE}
We define the \emph{tangent cone} to $\Omega$ at the point $x$ as
\[
	\T_x \Omega := \bigg\{ v \in \R^d \colon \liminf_{\lambda\to 0+}
		\lambda^{-1} d(x+\lambda v, \Omega) = 0 \bigg\}
	\quad\text{for all $x \in \Omega$.}
\]
Here $d(x,y) := \| x-y \|$ for  $x,y \in \R^d$ is the induced distance.
\end{definition}

We then require that $\VELO(t,x) \in \T_x\Omega$ for all $x \in \partial
\Omega$. Note that $\T_x\Omega = \R^d$ if $x \in \mathring{\Omega}$. We can now
state the main existence result (recall Definitions~\ref{D:USC} and
\ref{D:MEAS}).

%========== THEOREM
\begin{theorem}\label{T:EXISTENCE}
Let $I := [s,T]$. Suppose that a set-valued map $F \colon I \times \Omega
\longrightarrow \POWER(\R^d)$ is given with nonempty closed convex values and
with the following properties:
\begin{enumerate}

\item For all $t\in I$, the map $x\mapsto F(t,x)$ is upper semicontinuous.

\item For all $x\in \Omega$, the map $t\mapsto F(t,x)$ is measurable.

\item For all $(t,x) \in [s,T)\times\Omega$, we have $F(t,x) \cap \T_x\Omega
\neq \varnothing$.

\item There exists a function $c\in \L^1(I)$ such that
\begin{equation}
	\|F(t,x)\| \LS c(t) (1+|x|)
	\quad\text{for all $(t,x) \in I\times \Omega$,}
\label{E:GROWTH}
\end{equation}
where $\|F(t,x)\| := \sup\{ |y| \colon y\in F(t,x) \}$.
\end{enumerate}
Then there exists an a.c.\ solution of \eqref{E:DIFFINCL} for every $x\in
\Omega$.
\end{theorem}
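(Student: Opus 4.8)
The plan is to establish local existence first, via a fixed-point / Euler-polygon argument for differential inclusions, and then to glue local solutions together using the linear growth bound \eqref{E:GROWTH} to rule out blow-up, so that the solution extends to the whole interval $[s,T]$. For the local step I would fix $x_0\in\Omega$, pick $r>0$, and work on a small time interval $[s,s+\delta]$ chosen so that the a priori estimate coming from \eqref{E:GROWTH} (via Gronwall: $|\gamma(t)-x_0|\le (1+|x_0|)\exp(\int_s^t c)-1$ or similar) keeps any candidate curve inside the closed ball $\bar B(x_0,r)\cap\Omega$. On this set $F$ is bounded in the sense that $\|F(t,x)\|\le c(t)M$ for a constant $M$ depending on $r,x_0$, and $t\mapsto c(t)$ is integrable, so candidate velocities lie in a fixed ball of $\L^1([s,s+\delta];\R^d)$ with equi-integrable (dominated) integrands.

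The core existence mechanism I would use is the standard one for upper semicontinuous, convex-compact-valued right-hand sides: construct approximate solutions $\gamma_n$ by an Euler scheme (or by solving \eqref{E:DIFFINCL} with a mollified/selection-based right-hand side), obtain from the $\L^1$-bound on $\dot\gamma_n$ that $\{\gamma_n\}$ is equi-absolutely-continuous and equibounded, hence by Arzel\`a--Ascoli (a subsequence) converges uniformly to some absolutely continuous $\gamma$, while $\dot\gamma_n\WEAK \dot\gamma$ weakly in $\L^1$. Then the convex-values plus upper semicontinuity assumption (1) lets one pass to the limit in the inclusion $\dot\gamma_n(t)\in F(t,\gamma_n(t))$ — the Mazur-lemma / convexity argument shows $\dot\gamma(t)$ lies in $\bigcap_{\varepsilon>0}\overline{\mathrm{conv}}\,F(t,\bar B(\gamma(t),\varepsilon))$, which by u.s.c. and closed convexity of the values equals $F(t,\gamma(t))$ for a.e.\ $t$. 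The measurability hypothesis (2) in $t$ is what makes $t\mapsto F(t,\gamma_n(t))$ measurable (composing a Carath\'eodory-type set-valued map with a continuous curve) so that measurable selections exist and the Euler/selection scheme is well-defined; here I would invoke the measurable selection machinery recorded in Theorem~\ref{T:MMT} and Lemmas~\ref{L:EQUIMEAS}--\ref{L:INTMEAS}.

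The viability constraint, assumption (3), $F(t,x)\cap \T_x\Omega\neq\varnothing$, is what guarantees the constructed curve actually stays in $\Omega$ rather than leaving through $\partial\Omega$; I would incorporate it by always choosing the approximate velocities from $F(t,x)\cap\T_x\Omega$ (nonempty by (3), and one can arrange measurability of this intersection using Lemma~\ref{L:INTMEAS} together with weak measurability of $x\mapsto\T_x\Omega$), or else invoke a Nagumo-type viability theorem directly: for u.s.c.\ convex-valued $F$ satisfying the tangency condition at every boundary point, viable solutions exist. Once a solution on $[s,s+\delta]$ is obtained with endpoint in $\Omega$, repeat from the new initial time; the Gronwall bound from \eqref{E:GROWTH} gives a uniform lower bound on the step length $\delta$ as long as $|\gamma(t)|$ stays finite, and since that same bound shows $|\gamma(t)|\le (1+|x|)\exp(\|c\|_{\L^1(I)})-1<\infty$ on all of $I$, finitely many steps cover $[s,T]$.

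The main obstacle I anticipate is the interplay of the \emph{boundary viability condition (3)} with the limiting procedure: ensuring that the approximate curves do not drift outside $\Omega$ in the limit, and that the selected approximate velocities can simultaneously be chosen in $F(t,x)\cap\T_x\Omega$ \emph{and} depend measurably on $t$ so the Euler scheme is legitimate. This requires either a careful Nagumo/viability argument (controlling $d(\gamma_n(t),\Omega)$ along the scheme and showing it vanishes in the limit) or a measurable-selection lemma for the intersection with the tangent cone; I expect the bulk of the technical work to sit there, whereas the growth bound (4) only enters for the routine globalization and the uniform a priori estimates.
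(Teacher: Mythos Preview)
Your sketch is a correct outline of the standard existence proof for differential inclusions with u.s.c.\ convex-valued right-hand side under a Nagumo-type tangency condition: approximate solutions, Arzel\`a--Ascoli compactness, Mazur/convexity to pass to the limit in the inclusion, viability via the tangent-cone hypothesis, and globalization by Gronwall. The identification of the viability step as the main technical point is also accurate.

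However, there is essentially nothing to compare against: the paper does not prove this theorem at all but simply cites Theorem~5.2 in Deimling's monograph \cite{Deimling1992}. Your proposal is, in broad strokes, a reconstruction of the argument one finds there (and in Aubin--Cellina), so in that sense it is consistent with the reference the paper invokes. If you want to match the paper exactly, a one-line citation suffices; if you want to actually supply a proof, what you have written is the right skeleton, though the measurable-selection step for velocities in $F(t,x)\cap\T_x\Omega$ and the control of $d(\gamma_n(t),\Omega)$ along the scheme would need to be fleshed out carefully (as you yourself note).
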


%========== PROOF
\begin{proof}
We refer the reader to Theorem~5.2 in \cite{Deimling1992}.
\end{proof}

%========== REMARK
\begin{remark}\label{R:GRONWALL}
By Gronwall's lemma and assumption~\eqref{E:GROWTH}, we have
\[
	|\gamma(t)| \LS \bigg( |x| + \int_s^t c(r) \,dr \bigg)
		\exp\bigg( \int_s^t c(r) \,dr \bigg)
\]
for all $s\LS t\LS T$. In particular, the solutions of \eqref{E:DIFFINCL} remain
bounded.
\end{remark}

The following topological properties of the solution set of \eqref{E:DIFFINCL}
will be crucial.

%========== THEOREM
\begin{theorem}\label{T:DEIMLING}
For any $x\in \Omega$ let $\Gamma(s,x)$ be the set of a.c.\ solutions of
\eqref{E:DIFFINCL}.
\begin{enumerate}

\item For all $x$ the set $\Gamma(s,x)$ is nonempty and compact.

\item The map $x\mapsto \Gamma(s,x)$ is upper semicontinuous.

\end{enumerate}
\end{theorem}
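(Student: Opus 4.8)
The plan is to work in the space $\C := \C(I;\R^d)$ of continuous curves on $I = [s,T]$ equipped with the sup-norm, and to realize $\Gamma(s,x)$ as a subset of a fixed compact ``envelope'' inside $\C$. First I would observe that, by Remark~\ref{R:GRONWALL}, every a.c.\ solution $\gamma$ of \eqref{E:DIFFINCL} starting at $x$ satisfies a uniform bound $|\gamma(t)| \LS R(x)$ on $I$, where $R(x)$ depends only on $|x|$ and $\|c\|_{\L^1(I)}$. On the set where $|\gamma| \LS R(x)$ the growth bound \eqref{E:GROWTH} gives $|\dot\gamma(t)| \LS c(t)(1+R(x))$ for a.e.\ $t$, so the curves in $\Gamma(s,x)$ are equi-absolutely-continuous: $|\gamma(t)-\gamma(t')| \LS (1+R(x)) \int_{t'}^t c(r)\,dr$. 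Hence $\Gamma(s,x)$ is bounded and equicontinuous, and by Arzel\`a--Ascoli its closure in $\C$ is compact. Nonemptiness is exactly Theorem~\ref{T:EXISTENCE}.

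The substantive part of (1) is that $\Gamma(s,x)$ is \emph{closed} in $\C$ (hence compact, being a closed subset of a compact set). Here I would take a sequence $\gamma_n \in \Gamma(s,x)$ with $\gamma_n \to \gamma$ uniformly and show $\gamma \in \Gamma(s,x)$. The derivatives $\dot\gamma_n$ are bounded in $\L^1(I)$ by an equi-integrable majorant $c(t)(1+R(x))$, so after passing to a subsequence $\dot\gamma_n \WEAK g$ weakly in $\L^1(I;\R^d)$ by the Dunford--Pettis theorem; integrating, $\gamma(t) = x + \int_s^t g(r)\,dr$, so $\gamma$ is absolutely continuous with $\dot\gamma = g$ a.e. It remains to see $\dot\gamma(t) \in F(t,\gamma(t))$ a.e. This is the classical convergence-of-selections step: using that $F$ has closed convex values, that $x \mapsto F(t,x)$ is u.s.c.\ and $t \mapsto F(t,x)$ is measurable, together with Mazur's lemma (convex combinations of $\dot\gamma_n$ converge strongly, hence a.e.\ along a further subsequence) and the upper semicontinuity of $F$ to localize the limit into $F(t,\gamma(t))$. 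This closed-graph/weak-compactness argument is the main obstacle and is where the convexity of the values of $F$ is essential; I expect it to be citable from \cite{Deimling1992} alongside Theorem~\ref{T:EXISTENCE}.

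For (2), upper semicontinuity of $x \mapsto \Gamma(s,x)$, I would fix $x_0$ and an open $V \subset \C$ with $\Gamma(s,x_0) \subset V$ and argue by contradiction: if there were $x_n \to x_0$ and $\gamma_n \in \Gamma(s,x_n) \setminus V$, then the uniform bounds above (applied on a fixed neighborhood of $x_0$, so $R$ is uniformly bounded) make $\{\gamma_n\}$ precompact in $\C$, so a subsequence converges uniformly to some $\gamma$. By the same weak-$\L^1$-compactness and closed-graph argument as in (1)—now also using $\gamma_n(s) = x_n \to x_0$—one gets $\gamma \in \Gamma(s,x_0) \subset V$, contradicting $\gamma_n \notin V$ and $V$ open. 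Alternatively, since the envelope can be taken locally constant in $x$, one may invoke Lemma~\ref{L:USC}: it suffices to check that $\Gamma(s,\cdot)$ has closed graph into a locally compact target, which is precisely the $x_n \to x_0$ version of the closedness argument. Both routes reduce to the same limiting lemma, so the work is concentrated there; again I expect this to follow from the cited theory in \cite{Deimling1992} (e.g.\ the stability results for differential inclusions), and the write-up can simply assemble these ingredients.
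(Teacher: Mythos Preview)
Your sketch is correct and follows the standard route (Arzel\`a--Ascoli for precompactness via the Gronwall bound and equi-absolute-continuity, Dunford--Pettis plus Mazur for the derivatives, and the closed-graph-into-a-compact-envelope argument for u.s.c.). The paper, however, gives no argument at all: its entire proof is a one-line citation to Theorem~7.1 in \cite{Deimling1992}. So you have reconstructed precisely the content that the paper outsources, and your own remark that the result ``can simply be cited from \cite{Deimling1992}'' already matches what the paper actually does.
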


%========== PROOF
\begin{proof}
We refer the reader to Theorem~7.1 in \cite{Deimling1992}.
\end{proof}

Here $\Gamma(s,x)$ is understood as a subset of $\C([s,T]; \Omega)$, the space
of continuous curves $\gamma \colon [s,T] \longrightarrow \R^d$ with $\gamma(t)
\in \Omega$ for all $t$, endowed with the $\sup$-norm. 
% See page 26 bottom in \cite{Deimling1992}.

\medskip

Let us now return to the transport equation \eqref{E:TREQN3} for a given
velocity field $\VELO$. As outlined in the Introduction, we intend to approach
this problem by factoring out the geometry, i.e., by constructing first a
suitable density $\RHO$ satisfying \eqref{E:CONTEQN} and a flow map $\FLOW$ as
in \eqref{E:FLOW}. In order to be able to use Theorem~\ref{T:EXISTENCE}, we
define
\begin{equation}
	F(t,x) := \bigcap_{\xi \in \SPHERE^{d-1}} \big\{ v\in\R^d \colon
		\xi\cdot v \LS h(t,x,\xi) \big\}
	\quad\text{for $(t,x) \in [0,T] \times \Omega$,}
\label{E:CONVE}
\end{equation}
with essential supporting function $h(t,x,\xi) := \lim_{\delta\to 0}
h_\delta(t,x,\xi)$ and
\begin{align}
	h_\delta(t,x,\xi) & := \ESUP_{y\in B_\delta(x)\cap\Omega}
		\big( \xi\cdot\VELO(t,y) \big)
\label{E:SUPPF}
\\
		& \hphantom{:}= 
			\inf_{\LEB^d(N) = 0} \sup_{y \in B_\delta(x)\setminus N}
				\big( \xi\cdot\VELO(t,y) \big).
\nonumber
\end{align}
By construction, the behavior of $\VELO(t,\cdot)$ on $\LEB^d$-null sets is
irrelevant for the definition of $F(t,x)$. In particular, we may not have
$\VELO(t,x) \in F(t,x)$. We emphasize that the velocity field $\VELO$ is a
concrete function, not an equivalence class modulo null sets.

The essential $\sup$ is monotone with respect to set inclusion: for $A\subset B
\subset \Omega$ and $f \colon \Omega \longrightarrow \bar{\R} := \R \cup
\{\pm\infty\}$ we have that $\ESUP_{x\in A} f(x) \LS \ESUP_{x\in B} f(x)$.
Indeed for any $\EPS>0$ there exists a set $N\subset\Omega$ with $\LEB^d(N) = 0$
such that
\[
	\ESUP_{x\in B} f(x) + \EPS
		\GS \sup_{x\in B\setminus N} f(x)
		\GS \sup_{x\in A\setminus N} f(x)
		\GS	\ESUP_{x\in A} f(x).
\]
Since $\EPS>0$ was arbitrary, the claim follows.

%========== REMARK
\begin{remark}
We remark that \eqref{E:CONVE} is equivalent to the more common definition
\begin{equation}
	F(t,x) := \bigcap_{\delta > 0} \bigcap_{\LEB^d(N) = 0}
		\overline{\CONV} \, \VELO\big( t,B_\delta(x)\setminus N \big)
	\quad\text{for $(t,x) \in [0,T] \times \Omega$,}
\label{E:FILIPPOV}
\end{equation}
which was introduced by Filippov; see \cites{Filippov1960, Filippov1964}.
$F(t,x)$ is the smallest closed convex set, any neighborhood of which contains
the values of $\VELO(t,z)$ for almost all $z$ in some neighborhood of $x$. We
will call \emph{Filippov solution} of the differential equation
\eqref{E:DIFFEQN} any a.c.\ solution of  the differential inclusion
\eqref{E:DIFFINCL} with $F$ defined by \eqref{E:CONVE}.
\end{remark}

%========== PROPOSITION
\begin{proposition}\label{P:COMPATIBLE}
Let $I := [s,T]$ for some $T>0$ and $\Omega \subset \R^d$ closed. Suppose that a
velocity field $\VELO \in \L^\infty_\LOC(I\times \Omega; \R^d)$ is given with
the following properties:
\begin{enumerate}

\item There exists a function $c\in \L^1(I)$ such that
\begin{equation}
	|\VELO(t,x)| \LS c(t) (1+|x|)
	\quad\text{for all $(t,x) \in I\times \Omega$.}
\label{E:VELOB}
\end{equation}

\item For all $(t,x) \in [s,T)\times\partial\Omega$, we have $F(t,x) \cap
\T_x\Omega \neq \varnothing$, where the set-valued function $(t,x) \mapsto
F(t,x)$ is defined in \eqref{E:CONVE}; see also Definition~\ref{D:TANCONE}.
\end{enumerate}
Then the map $F$ satisfies the conditions of Theorem~\ref{T:EXISTENCE}.
\end{proposition}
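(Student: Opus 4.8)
The goal is to verify that the Filippov set-valued map $F$ defined in \eqref{E:CONVE} satisfies the four hypotheses of Theorem~\ref{T:EXISTENCE}: (0) nonempty closed convex values; (1) $x\mapsto F(t,x)$ u.s.c.; (2) $t\mapsto F(t,x)$ measurable; (3) the tangency condition $F(t,x)\cap\T_x\Omega\neq\varnothing$; and (4) the linear growth bound \eqref{E:GROWTH}. Items (3) and part of (0) are essentially handed to us: (3) is exactly hypothesis~(2) of the Proposition (together with the trivial observation that $\T_x\Omega=\R^d$ for interior $x$), and convexity and closedness of each $F(t,x)$ are immediate from \eqref{E:CONVE}, since $F(t,x)$ is an intersection of closed half-spaces. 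So the real work is: nonemptiness, the growth bound, upper semicontinuity in $x$, and measurability in $t$.

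First I would dispose of the growth bound (4) and nonemptiness (0) together. From \eqref{E:VELOB} we get $|\xi\cdot\VELO(t,y)|\le c(t)(1+|y|)$ for $|\xi|=1$, hence for $y\in B_\delta(x)\cap\Omega$ we have $\xi\cdot\VELO(t,y)\le c(t)(1+|x|+\delta)$ off a null set, so $h_\delta(t,x,\xi)\le c(t)(1+|x|+\delta)$ and, letting $\delta\to0$, $h(t,x,\xi)\le c(t)(1+|x|)$. Likewise $h(t,x,\xi)\ge -c(t)(1+|x|)$ by applying the estimate to $-\xi$. Therefore every $v\in F(t,x)$ satisfies $\xi\cdot v\le c(t)(1+|x|)$ for all unit $\xi$, i.e.\ $|v|\le c(t)(1+|x|)$, which is \eqref{E:GROWTH}. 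Nonemptiness then follows from the equivalent Filippov form \eqref{E:FILIPPOV}: each $\overline{\CONV}\,\VELO(t,B_\delta(x)\setminus N)$ is a nonempty closed convex subset of the fixed compact ball $\overline{B_{c(t)(1+|x|+\delta_0)}(0)}$ for $\delta\le\delta_0$, the family is nested (decreasing in $\delta$ and decreasing as we enlarge $N$ — using monotonicity of the essential sup noted in the text), so $F(t,x)$ is a nested intersection of nonempty compacts, hence nonempty (and compact); one should check the equivalence of \eqref{E:CONVE} and \eqref{E:FILIPPOV} claimed in the Remark, which is a standard support-function computation.

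Next, upper semicontinuity of $x\mapsto F(t,x)$ for fixed $t$. The clean route is via $h_\delta$: for fixed $t$ and $\xi$, the function $x\mapsto h_\delta(t,x,\xi)$ is nonincreasing as $\delta$ shrinks and, more importantly, $h(t,\cdot,\xi)=\inf_\delta h_\delta(t,\cdot,\xi)$ is upper semicontinuous in $x$, because $h_\delta(t,\cdot,\xi)$ is "almost locally constant" in the sense that $h_\delta(t,x',\xi)\le h_{\delta+|x'-x|}(t,x,\xi)$ whenever $B_{\delta}(x')\subset B_{\delta+|x'-x|}(x)$, and letting first $x'\to x$ (so the radius $\delta+|x'-x|\to\delta$) and then $\delta\to 0$ gives $\limsup_{x'\to x} h(t,x',\xi)\le h(t,x,\xi)$. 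Upper semicontinuity of the family of supporting functions $\{h(t,\cdot,\xi)\}_\xi$ translates into u.s.c.\ of the set-valued map: if $V\supset F(t,x)$ is open, then, since $F$ is compact-valued with a uniform local bound by \eqref{E:GROWTH}, a compactness argument shows that for $x'$ close enough to $x$ all the half-space constraints defining $F(t,x')$ have moved only slightly, so $F(t,x')\subset V$. I would phrase this last implication carefully — it is where one uses that we are in finite dimensions and that the values live in a fixed compact set — but it is standard; Lemma~\ref{L:USC} is available to switch between the $\varepsilon$-form and the $S^+(V)$-open form of u.s.c.

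Finally, measurability of $t\mapsto F(t,x)$ for fixed $x$. By Lemma~\ref{L:EQUIMEAS}(2) it suffices to show weak measurability, since the values are compact; and weak measurability of $t\mapsto F(t,x)=\bigcap_{\delta>0}F_\delta(t,x)$ (with $F_\delta$ the $\delta$-truncated Filippov set) can be obtained from Lemma~\ref{L:INTMEAS}, provided each $t\mapsto F_\delta(t,x)$ is weakly measurable and has closed values with at least one compact — here all values are compact by the growth bound. It reduces, via support functions, to measurability in $t$ of $h_\delta(t,x,\xi)$ for each fixed $\xi$; this follows because $t\mapsto\xi\cdot\VELO(t,y)$ is measurable and the essential supremum over $y\in B_\delta(x)\cap\Omega$ can, by separability of $B_\delta(x)$ and a standard argument, be realized as a countable supremum of measurable functions of $t$. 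One subtlety I would watch is that the essential-sup definition should be replaced by an equivalent countable-sup form uniformly in $t$ — the ``$\inf_{\LEB^d(N)=0}\sup_{y\notin N}$'' expression in \eqref{E:SUPPF} must be handled so that measurability in $t$ is preserved, which is the point where I expect to have to be most careful; a Fubini/Lusin-type argument or an explicit countable dense selection in the essential sup does the job.

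The main obstacle, in my estimation, is not any single hard theorem but the bookkeeping connecting the three representations — the intersection-of-half-spaces form \eqref{E:CONVE}, the Filippov convex-hull form \eqref{E:FILIPPOV}, and the support-function form via $h_\delta$ — and making sure each regularity property (u.s.c.\ in $x$, measurability in $t$) is transported through the right representation with the essential-sup subtleties controlled. Once that dictionary is in place, items (0)–(4) fall out as indicated and Theorem~\ref{T:EXISTENCE} applies.
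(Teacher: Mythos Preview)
Your overall strategy matches the paper's: dispose of nonemptiness, convexity, closedness, and the growth bound directly from \eqref{E:VELOB} and the support-function description; prove u.s.c.\ in $x$ via upper semicontinuity of $x\mapsto h(t,x,\xi)$; and then handle measurability in $t$ separately. For the u.s.c.\ step, the paper proceeds slightly differently from you --- rather than arguing that ``all half-space constraints move only slightly'' (which would require uniformity in $\xi$ that you have not justified), it shows that u.s.c.\ of the support function implies the graph of $x\mapsto F(t,x)$ is closed, and then invokes the standard fact that closed graph plus locally compact values implies u.s.c. Your route is fixable, but the closed-graph argument is cleaner.

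The genuine gap is in your measurability argument. You claim that the essential supremum $h_\delta(t,x,\xi)=\ESUP_{y\in B_\delta(x)\cap\Omega}\xi\cdot\VELO(t,y)$ can ``by separability of $B_\delta(x)$ and a standard argument be realized as a countable supremum of measurable functions of $t$.'' This is false as stated: for a merely measurable function $y\mapsto\VELO(t,y)$, the essential supremum is \emph{not} determined by the values on any countable dense set of $y$'s --- think of $\VELO(t,\cdot)=\ONE_E$ for a fat Cantor set $E$. You cannot choose a single countable $y$-set that works for all $t$ simultaneously, and you cannot let the set depend on $t$ without destroying measurability. Your fallback suggestion of a ``Fubini/Lusin-type argument'' is in the right spirit but is not a proof.

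The paper resolves this with a concrete device you did not mention: for fixed $x,\xi,\delta$, write $A=B_\delta(x)\cap\Omega$, normalize $\mu=\LEB^d\llcorner\Omega/|A\cap\Omega|$, set $f_t(y)=\xi\cdot\VELO(t,y)$, and observe that
\[
	\ESUP_{y\in A} f_t(y) \;=\; \lim_{r\to\infty}\frac{1}{r}\log\int_A \exp\big(r f_t(y)\big)\,\mu(dy).
\]
The integral on the right is measurable in $t$ by Fubini--Tonelli, hence so is the limit. This $L^r$-norm limit is the missing ingredient; once you have measurability of $t\mapsto h(t,x,\xi)$ for each fixed $\xi$, the paper passes to measurability of $t\mapsto F(t,x)$ via the distance formula $d(y,F(t,x))=\sup_{\xi\in S}(\xi\cdot y-h(t,x,\xi))$ over a countable dense $S\subset\SPHERE^{d-1}$ --- here the countable-sup trick \emph{does} work, because $\xi\mapsto h(t,x,\xi)$ is convex, hence continuous.
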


%========== REMARK
\begin{remark}
From a measure theory point of view it would be very natural to define the
essential supporting function as the \emph{approximate} upper limit
\[
  \tilde{h}(t,x,\xi) := \LIMSUP_{y\to x} \Big( \xi\cdot \VELO(t,y) \Big)
\]
for $(t,x) \in I\times \Omega$ and $\xi \in \SPHERE^{d-1}$; see Section~2.9.12
in \cite{Federer1969}. One can show that
  
\begin{equation}
  \tilde{h}(t,x,\xi) = \lim_{\delta\to 0} \Bigg(
    \inf_{C\in\mathcal{C}} \sup_{y\in B_\delta(x)\cap C} 
    \Big( \xi\cdot \VELO(t,y) \Big)	\Bigg)
\label{E:EDC}
\end{equation}
where $\mathcal{C}$ is  the class of  measurable subsets of $\Omega$ with
density $1$ at $x$; see the argument in \cite{Smallwood1972}. This class
contains in particular sets of the form $\Omega\setminus N$ with $\LEB^d(N) =
0$, which implies that \eqref{E:EDC} is not bigger than the $h(t,x,\xi)$ defined
using \eqref{E:SUPPF}. In order to prove that it can be strictly smaller (and
not u.s.c.), we may use a construction from \cite{Gusev2018} for $d=1$: Consider
a Borel set $P\subset\R$ with the property that 
\begin{equation}
\begin{minipage}[c]{0.7\textwidth}
\centering
for every open interval $J\subset\R$ the sets $J\cap P$ and $J\setminus P$ both 
have positive Lebesgue measure.
\end{minipage}
\label{E:PROPP}
\end{equation}
The existence of such a set can be established arguing as in
\cite{KatznelsonStromberg1974}. We then define $f\colon\Omega \longrightarrow
\R$ by $f(x) := +1$ if $x \in P\cap \Omega$, $f(x) := -1$ otherwise, and set
$\VELO(t,\cdot) := f$ for all $t$. Since $f$ is Borel measurable, it is
approximately continuous almost everywhere; see Theorem~2.9.13 in
\cite{Federer1969}. In particular, we have that $\tilde{h}(t,x,\xi) = \xi f(x)$
for $\xi = \pm 1$, hence $\tilde{F}(t,x) = \{f(x) \}$ is single-valued a.e.,
where $\tilde{F}$ is defined as in \eqref{E:CONVE} with $\tilde{h}$ in place of
$h$. On the other hand, it holds $F(t,x) = [-1,1]$ everywhere because of
\eqref{E:PROPP}. The constant function $\gamma(t) := \alpha$ for $t \in [0,T]$,
where $\alpha \in (-1,1)$, is a Filippov solution corresponding to
$\VELO(t,\cdot) = f$ since $\dot{\gamma}(t) = 0 \in F(t,\gamma(t))$. The
assumption of $\gamma$ being a Carath\'{e}odory solution, however, leads to the
contradiction
\[
  f\big( \gamma(0) \big) 
    = \frac{1}{T} \int_0^T f\big( \gamma(0) \big) \,dt
    = \frac{1}{T} \int_0^T f\big( \gamma(t) \big) \,dt
    = \frac{1}{T} \big( \gamma(T)-\gamma(0) \big) 
    = 0.
\]
In fact, it is shown in \cite{Gusev2018} that the set of Carath\'{e}odory
solutions is empty.
\end{remark}

%========== REMARK
\begin{remark}
The Filippov construction \eqref{E:FILIPPOV} changes the velocity field, which
may be undesired in some applications. On the other hand, the density $\RHO$ and
the velocity field $\VELO$ are often constructed simultaneously from an
approximation, e.g., in the construction of solutions to the compressible Euler
equations.
\end{remark}

%========== REMARK
\begin{remark}
As mentioned in the Introduction, Filippov solutions have already been
considered in \cite{PoupaudRascle1997} in order to construct solutions of the
continuity equation: Under the assumption that Filippov solutions are uniquely
determined, which follows for example from a one-sided Lipschitz condition on
the velocity field $\VELO$ (see \cites{Filippov1988} for details), a unique flow
can be built from Filippov solutions. A \emph{measure solution} of the
continuity equation for $\VELO$ is then defined as the push-forward of the
initial data under this flow. By uniqueness of the flow, uniqueness in the class
of measure solutions follows. We refer the reader to \cite{PoupaudRascle1997}
for details; see also \cite{BouchutJamesMancini2005}.
% There is a stability result for the Filippov flows in
% \cite{BianchiniGloyer2011}, which will be discussed in Section~\ref{SS:SOFM}.

In contrast, our main interest is not on uniqueness, but on existence of flows
for possibly low-regularity velocity fields, for which uniqueness of Filippov
solutions or of the corresponding flows may not be given. As in
\cite{PoupaudRascle1997} we then define solutions of the continuity equation
using the push-forward of initial data under the flow.
\end{remark}

%========== PROOF
\begin{proof}[Proof of Proposition~\ref{P:COMPATIBLE}] The result is well-known;
see \cites{Filippov1988, AubinCellina1984, Deimling1992, Hoermander1997}, for
instance. In these references, the arguments are often only sketched, therefore
we provide a detailed proof for the reader's convenience; see also
\cite{Haller2008}. Note first that $h_\delta(t,x,\xi)$ is well-defined for
\emph{every} $x\in \Omega$ and a.e.\ $t\in I$. Then \eqref{E:VELOB}
implies the bound
\[
	|h_\delta(t,x,\xi)| \LS c(t) (1+|x|+\delta)
	\quad\text{for all $(t,x) \in I\times \Omega$,}
\]
for all $\xi\in\SPHERE^{d-1}$ and $\delta>0$. It is straightforward to check
that the map $\xi \mapsto h_\delta(t,x,\xi)$ is one-homogeneous and convex, and
the same is true for its pointwise limit $h(t,x,\xi)$. It follows that for all
$x\in \Omega$ and a.e.\ $t\in I$ the set $F(t,x)$ is nonempty and convex,
satisfying \eqref{E:GROWTH}. Since $F(t,x)$ is also closed, it is in fact
compact. Recall that in this case, weakly measurable and measurable are
equivalent; see Lemma~\ref{L:EQUIMEAS}.

For the remaining proof, we proceed in two steps.

\medskip

\textbf{Step~1.} The function $x\mapsto \VELO(t,x)$ with $x\in \Omega$ is
measurable for a.e.\ $t\in I$. We claim that that for any such $t$, the map $x
\mapsto h_\delta(t,x,\xi)$ is upper semicontinuous, for every $\xi \in
\SPHERE^{d-1}$ and $\delta>0$. We must prove the following inequality:
\begin{equation}
	\limsup_{y\to x} h_\delta(t,y,\xi) \LS h_\delta(t,x,\xi).
\label{E:ONE}
\end{equation}
We consider any sequence of $y_k\in\Omega$ such that $|y_k-x| \LS 1/k$ and
\[
	\lim_{k\to \infty} h_\delta(t,y_k,\xi)
		= \limsup_{y\to x} h_\delta(t,y,\xi).
\]
Then $B_\delta(y_k) \subset B_{\delta+1/k}(x)$, which implies that
\begin{equation}
	\ESUP_{z\in B_\delta(y_k)\cap\Omega} \big( \xi\cdot\VELO(t,z) \big)
		\LS \ESUP_{z\in B_{\delta+1/k}(x)\cap\Omega} 
			\big( \xi\cdot\VELO(t,z) \big).
\label{E:HUSC}
\end{equation}
Since the balls $B_{\delta+1/k}(x)$ form a nested and decreasing sequence of
sets, the right-hand side of \eqref{E:HUSC} converges to $h_\delta(t,x,\xi)$ as
$k\to\infty$. This proves \eqref{E:ONE}. We now use that the pointwise $\inf$ of
a family of upper semicontinuous functions is upper semicontinuous. This follows
from the fact that for u.s.c.\ functions the preimages of open intervals of the
form $(-\infty,\alpha)$ with $\alpha\in\R$ are open, which implies that the
preimages of such intervals under pointwise infima of u.s.c.\ functions are
unions of open sets, thus open again. Notice that in the definition of
$h(t,x,\xi)$ we can replace $\lim_{\delta\to 0}$ by $\inf_{\delta>0}$ because
$h_\delta(t,x,\xi)$ is nonincreasing as $\delta$ gets smaller. We conclude that
$x \mapsto h(t,x,\xi)$ is upper semicontinuous, for all $\xi\in \SPHERE^{d-1}$
and a.e.\ $t\in I$.
% Upper semicontinuity implies measurability because
% %
% \[
% 	\{x \in \Omega \colon h(t,x,\xi)<\alpha \}
% 	\quad\text{is open for all $\alpha\in\R$.}
% \]
% %

In order to prove that the set-valued map $x\mapsto F(t,x)$ is upper
semicontinuous, for a.e.\ $t\in I$, we first prove that the function has closed
graph. Consider again a sequence of $y_k \in \Omega$ with $y_k \longrightarrow
x$ as $k\to\infty$ and suppose that $v_k \in F(t,y_k)$ for all $k$ with $v_k
\longrightarrow v$. By definition of $F(t,y_k)$ we have $\xi\cdot v_k \LS
h(t,y_k,\xi)$ for all $\xi \in \SPHERE^{d-1}$. By upper semicontinuity of the
essential support function, it follows that
\[
	\xi\cdot v = \lim_{k\to\infty} \xi\cdot y_k
		\LS \limsup_{k\to\infty} h(t,y_k,\xi) \LS h(t,x,\xi).
\]
Since this holds for every $\xi \in \SPHERE^{d-1}$, we conclude that $v\in
F(t,x)$.

It remains to prove that the closed graph property implies upper semicontinuity;
see Corollary~1.1 in \cite{AubinCellina1984}. For simplicity, we define the
set-valued map $S(x) := F(t,x)$ for all $x\in \Omega$ and $t$ as above. Recall
that $S$ has compact values. Let us \emph{fix} $x_0\in \Omega$ for the
following. For any open subset $V \subset \BAN$ with $S(x_0) \subset V$, we must
establish the existence of a neighborhood $U \subset \Omega$ of $x_0$ such that
$S(U) \subset V$. It will be sufficient to consider $S$ restricted to a bounded
neighborhood $U_0\subset \Omega$ of $x_0$, for which $S(U_0)$ is contained in a
compact subset $K \subset E$. This is possible because of \eqref{E:GROWTH}.

We consider the complement $W := K \setminus V$, which is again a compact subset
of $E$. By assumption, $S(x_0) \subset V$. Therefore, for any $y \in W$ we have
$y \not\in S(x_0)$, and thus $(x_0,y) \not\in \GRAPH(S)$. Since $\GRAPH(S)$ is
closed, there exist neighborhoods $N(y)$ of $y$ and $N_y(x_0)$ of $x_0$,
(relatively) open in $E$ and $\Omega$, respectively, such that
\[
	\GRAPH(S) \cap \big( N_y(x_0) \times N(y) \big) = \varnothing
	\quad\text{for all $y\in W$.}
\]
In particular, it follows that $S(x) \cap N(y) = \varnothing$ for every $x\in
N_y(x_0)$. Since $W$ is compact, it can be covered by finitely many $N(y_i)$
where $y_i \in W$ and $i=1\ldots n$. We now define $U := U_0 \cap
\bigcap_{i=1}^n N_{y_i}(x_0)$, which is (relatively) open in $\Omega$. For $x
\in U$ we have $S(x) \subset K = V \cup W$ because $x \in U_0$. On the other
hand, it holds
\[
	S(x) \cap N(y_i) = \varnothing
	\quad\text{for all $i=1\ldots n$,}
\]
because $x \in N_{y_i}(x_0)$. Since the $N(y_i)$ cover $W$, we conclude that
$S(x) \cap W = \varnothing$ for all $x\in U$, thus $S(U) \subset V$. As
$x_0\in\Omega$ was arbitrary, $S$ is upper semicontinuous. 

% We remark in passing that upper semicontinuity of $S$ implies its measurability.
% This can be proved as in Step~2 by using the measurability of $x\mapsto
% h(t,x,\xi)$.

\medskip

\textbf{Step~2.} We now prove measurability of the map $t \mapsto F(t,x)$ for
any \emph{fixed} $x\in \Omega$. Since there is no upper semicontinuity in time,
a new argument different from the one in Step~1 is needed. We first prove that
the essential supporting function
\[
	t \mapsto h(t,x,\xi)
	\quad\text{for $t\in I$ is measurable,}
\]
for every \emph{fixed} $\xi \in \SPHERE^{d-1}$. To simplify notation, for given
$\delta>0$ we write
\[
	A := B_\delta(x),
	\quad
	\mu := \frac{\LEB^d \llcorner \Omega}{|A\cap \Omega|},
	\quad\text{and}\quad
	f_t(x) := \xi \cdot \VELO(t,x).
\]
Notice that $\mu(A) = 1$, by construction. For any $r>0$ the function 
\[
	t \mapsto \int_A \exp\big( r f_t(y) \big) \,\mu(dy) =: \phi(t,r)
	\quad\text{is measurable for any $r>0$;}
\]
see Corollary~3.3.3 in \cite{Bogachev2007}. Recall that the composition of a
measurable function with a continuous one is measurable. Then also the function
\begin{equation}
	t \mapsto \frac{1}{r} \log\big( \phi(t,r) \big) 
	\quad\text{is measurable for any $r>0$.}
\label{E:DELTME}
\end{equation}

We now claim that for a.e.\ $t\in I$ we have
\begin{equation}
	\lim_{r\to\infty} \frac{1}{r} \log\big( \phi(t,r) \big) 
		= \ESUP_{y\in A} f_t(y) =: \alpha_t.
\label{E:ESUP}
\end{equation}
Here the essential $\sup$ is taken with respect to $\mu$. Assume first that
$\alpha_t>0$. Then we estimate $\phi(t,r) \LS \exp(r \alpha_t)$, from which we
get \eqref{E:ESUP} with inequality $\LS$. 

To prove the converse, we need to estimate $\phi(t,r)$ from below. We have
\[
	\phi(t,r) 
		\GS \exp(r\beta) \mu(A\cap\{f_t\GS\beta\})
			+ \int_{A \cap \{f_t<\beta\}} \exp\big( r f_t(y) \big) \,\mu(dy)
		=: \phi_\beta(t,r)
\]
for $0<\beta<\alpha_t$ and $r>0$. The first term in $\phi_\beta(t,r)$ grows
unboundedly as $r\to \infty$ since $\mu(A\cap\{f_t\GS \beta\})>0$ for all
$\beta>0$ strictly less than the $\ESUP$ of $f_t$. Here we have used the
assumption $\alpha_t>0$. The second term in $\phi_\beta(t,r)$ is nonnegative. 

For a.e.\ $y\in A$ and all $-1<h<1$ it holds
\[
	\frac{\exp\big( (r+h) f_t(y) \big)-\exp\big( r f_t(y) \big)}{h}
		= f_t(y) \exp\big( (r+\theta h) f_t(y) \big) 
\]
for some $\theta \in [0,1]$, by the mean value theorem. It follows that
\[
	\bigg| \frac{\exp\big( (r+h) f_t(y) \big)-\exp\big( r f_t(y) \big)}{h} \bigg|
		\LS \|f_t\|_{\L^\infty(A)} \exp\big( (r+1) \|f_t\|_{\L^\infty(A)} \big),	
\]
which is finite for a.e.\ $t$. By dominated convergence, we obtain for $h\to 0$
that
\[
	\partial_r\phi_\beta(t,r) = \beta\exp(r\beta) \mu(A\cap\{f_t\GS\beta\})
		+ \int_{A\cap\{f_t<\beta\}} f_t(y) \exp\big( r f_t(y) \big) \,\mu(dy).
\]
Using de l'Hopital rule and canceling $\exp(r\beta)$, we now obtain the identity
\begin{align}
	& \lim_{r\to\infty} \frac{1}{r} \log\big( \phi_\beta(t,r) \big)
		= \lim_{r\to\infty} \frac{\partial_r\phi_\beta(t,r)}{\phi_\beta(t,r)}
\label{E:FRAC}\\
%	& \qquad = \lim_{r\to\infty}
%		\frac{\DST \beta \exp(r\beta) \mu(A\cap\{f_t\GS\beta\})
%			+ \int_{A\cap\{f_t<\beta\}} f_t(y) \exp\big( r f_t(y) \big) \,\mu(dy)}
%		{\DST \exp(r\beta) \mu(A\cap\{f_t\GS\beta\})
%			+ \int_{A\cap\{f_t<\beta\}} \exp\big( r f_t(y) \big) \,\mu(dy)}
%\nonumber\\
	& \qquad = \lim_{r\to\infty} 
		\frac{\DST \beta \mu(A\cap\{f_t\GS\beta\})
			+ \int_{A\cap\{f_t<\beta\}} f_t(y) \exp\Big( r \big( f_t(y)-\beta 
				\big) \Big) \,\mu(dy)}
		{\DST \mu(A\cap\{f_t\GS\beta\})
			+ \int_{A\cap\{f_t<\beta\}} \exp\Big( r \big( f_t(y)-\beta \big)
				\Big) \,\mu(dy)}.
\nonumber
\end{align}
Note that $|f_t(y) \exp(r(f_t(y)-\beta))| \LS \|f_t\|_{\L^\infty(A)}$ for every
$y\in A\cap\{f_t<\beta\}$, with a similar estimate for the integrand in the
denominator of \eqref{E:FRAC}. The two integrals in \eqref{E:FRAC} converge to
zero as $r\to\infty$, by dominated convergence. Thus
\[
	\lim_{r\to\infty} \frac{1}{t} \log\big( \phi(t,r) \big)
		\GS \beta 
	\quad\text{for all $0<\beta<\alpha_t$.}
\]
This proves the identity \eqref{E:ESUP} in the case $\alpha_t>0$.

If $\alpha_t\LS 0$, then we define the function $g_t := f_t - (\alpha_t-1)$ and
rewrite
\[
	\phi(t,r) = \exp\big( r (\alpha_t-1) \big) 
		\int_A \exp\big( r g_t(y) \big) \,\mu(dy)
	\quad\text{for all $r>0$.}
\]
Recall that $\mu(A)=1$. We have $\ESUP_{x\in A} g_t(y) = 1$, which is positive.
Then
\begin{align*}
	\lim_{r\to\infty} \frac{1}{r} \log\big( \phi(t,r) \big) 
		& = (\alpha_t-1) + \lim_{r\to\infty} \frac{1}{r} \log\bigg(
			\int_A \exp\big( r g_t(y) \big) \,\mu(dy) \bigg)
\\
		& = (\alpha_t-1) + 1 = \alpha_t,
\end{align*}
where we have used identity \eqref{E:ESUP} with $g_t$ in place of $f_t$.

Combining \eqref{E:DELTME} and \eqref{E:ESUP}, we conclude that the map
\begin{equation}
	t \mapsto \ESUP_{y\in B_\delta(x)\cap\Omega} \big( \xi\cdot \VELO(t,y) \big)
		= h_\delta(t,x,\xi)
	\quad\text{for $t\in I$ is measurable}
\label{E:DETH}
\end{equation}
since the pointwise limits of sequences of measurable functions are measurable.
The essential supporting function $t \mapsto h(t,x,\xi)$, which is obtained from
\eqref{E:DETH} by sending $\delta\to 0$, is measurable as well, for all
$x\in\Omega$ and $\xi \in \SPHERE^{d-1}$ fixed. 

For $x\in \Omega$ fixed we define $F_x(t) := F(t,x)$ for all $t\in I$. We must
show that 
\[
	F_x^-(B) = \{ t\in I \colon F_x(t)\cap B \neq \varnothing \}
	\quad\text{is measurable for all $B\subset \R^d$ open.}
\]
We follow Theorem~8.3.1 in \cite{AubinFrankowska2009} and consider first the
case of an open ball $B = B_r(y)$ centered at $y\in \R^d$ with radius $r>0$.
Then $F_x(t)\cap B_r(y)$ is nonempty if and only if the Euclidean distance $d(y,
F_x(t))$ is strictly less than $r$. Thus
\begin{equation}
	F_x^-\big( B_r(y) \big) 
		= \big\{ t\in I \colon d\big( y,F_x(t) \big) < r \big\}.
\label{E:PREIM}
\end{equation}
Since the set $F_x(t)$ is closed and convex, the distance $d(y,F_x(t))$ can be
expressed in terms of $h(t,x,\xi)$ as follows (see Theorem~8.2.14 in
\cite{AubinFrankowska2009}):
\begin{align}
	d\big(y,F_x(t)) 
		& = d\big(0,F_x(t)-y)
\label{E:DISTF}\\
		& = -\inf_{\xi\in\SPHERE^{d-1}} \sup_{a\in F_x(t)} 
			\big( \xi\cdot(a-y) \big)
		= \sup_{\xi\in\SPHERE^{d-1}} \big( \xi\cdot y-h(t,x,\xi) \big).
\nonumber
\end{align}
It suffices to consider in \eqref{E:DISTF} the $\sup$ over a \emph{countable}
set $S$ that is dense in $\SPHERE^{d-1}$. Indeed it is known that a convex
function defined on an open convex set $U$ in a normed vector space is Lipschitz
continuous on any compact subset of $U$; see \cite{RobertsVarberg1974}. If the
$\sup$ in \eqref{E:DISTF} is attained at $\xi \in \SPHERE^{d-1}$, then there
exists a sequence of $\xi_k \in S$ with $\xi_k \longrightarrow \xi$ and
$h(t,x,\xi_k) \longrightarrow h(t,x,\xi)$ as $k\to\infty$, by density of $S$.

Since the function $t \mapsto h(t,x,\xi)$ is measurable for every $x\in\Omega$
and $\xi\in S$ fixed, it follows that the distance \eqref{E:DISTF}, as the
pointwise $\sup$ of countably many measurable functions, is measurable in $t$
for every $y\in\R^d$. This implies that \eqref{E:PREIM} is measurable for any
$B_r(y)$. For a general open set $B\subset \R^d$ we have the identity
\begin{equation}
	F_x^-(B) = \bigcup_{n\in\N} F_x^{-1}\big( B_{r_n}(y_n) \big),
\label{E:FMINB}
\end{equation}
with $(B_{r_n}(y_n))_n$ a countable family of open balls whose union equals $B$.
Thus \eqref{E:FMINB} is measurable for all $B\subset\R^n$ open, which proves
that $F_x$ is measurable.
\end{proof}

%%%%%%%%%%%%%%%%%%%%%%%%%%%%%%%%%%%%%%%%%%%%%%%%%%%%%%%%%%%%%%%%%%%%%%%%%%%%%%%%
%%%%%%%%%%%%%%%%%%%%%%%%%%%%%%%%%%%%%%%%%%%%%%%%%%%%%%%%%% Selection of Flow Map

\subsection{Selection of Flow Map}

Here is the main result of Section~\ref{S:MSP}.

%========== THEOREM
\begin{theorem}\label{T:SELECTION}
Let $I := [0,T]$ for some $T>0$ and $\Omega \subset \R^d$ closed. Suppose that a
velocity $\VELO \in \L^\infty_\LOC(I\times \Omega; \R^d)$ is given with the
properties listed in Proposition~\ref{P:COMPATIBLE}. Then there exists a map
$\FLOW \colon I\times I\times \Omega \longrightarrow \Omega$ such that
\begin{enumerate}

\item For all $0\LS s\LS t\LS T$, the map $x \mapsto \FLOW(t, s, x)$ is Borel
measurable.

\item For all $(s,x) \in I \times \Omega$, the map $t \mapsto \FLOW(t, s, x)$ is
a Filippov solution of 
\begin{equation}
	\dot{\gamma}(t) = \VELO\big( t,\gamma(t) \big)
	\quad
	\text{for $s\LS t\LS T$,}
	\quad
	\gamma(s) = x.
\label{E:DIFFEQN2}
\end{equation}
\item The flow has the semigroup property: for all $x\in \Omega$ it holds
\begin{equation}
	\FLOW(t, r, x) = \FLOW\big( t, s, \FLOW(s, r, x) \big)
	\quad\text{for $0\LS r\LS s\LS t\LS T$.}
\label{E:SEMIGROUP}
\end{equation}
\end{enumerate}
\end{theorem}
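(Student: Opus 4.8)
The plan is to construct $\FLOW$ by transfinite (actually just iterated, countable) selection combined with a monotone-class argument on dyadic times, using the Measurable Maximum Theorem (Theorem~\ref{T:MMT}) as the basic selection engine. First I would fix once and for all a countable collection $\{g_i\}_{i\in\N}$ of bounded Lipschitz functions on the Polish space $\C(I;\Omega)$ that strongly separates points, as provided by Remark~\ref{R:FAMILY}; order the $g_i$ so that partial information about finitely many of them pins down a curve more and more tightly. By Theorem~\ref{T:DEIMLING}, for each $(s,x)$ the solution set $\Gamma(s,x)\subset\C([s,T];\Omega)$ is nonempty and compact, and $x\mapsto\Gamma(s,x)$ is u.s.c., hence (being compact-valued into a separable metric space) weakly measurable by Lemmas~\ref{L:USC} and~\ref{L:EQUIMEAS}. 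Now define a selector by successive \emph{lexicographic maximization}: let $\Gamma_1(s,x)$ be the set of maximizers of $\gamma\mapsto g_1(\gamma)$ over $\Gamma(s,x)$; inductively let $\Gamma_{i+1}(s,x)$ be the maximizers of $g_{i+1}$ over $\Gamma_i(s,x)$. By Theorem~\ref{T:MMT} each $\Gamma_i(\cdot,\cdot)$ has nonempty compact values and is (weakly) measurable in $x$ for each fixed $s$; the intersection $\Gamma_\infty(s,x):=\bigcap_i\Gamma_i(s,x)$ is nonempty (nested nonempty compacts) and, by Lemma~\ref{L:INTMEAS}, measurable, and because the $g_i$ strongly separate points it is a \emph{singleton}. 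Calling that point $\gamma_{s,x}$ and setting $\FLOW(t,s,x):=\gamma_{s,x}(t)$ gives properties~(1) and~(2) immediately: (2) holds because every element of $\Gamma(s,x)$ is a Filippov solution, and (1) holds because $x\mapsto\gamma_{s,x}$ is a measurable selector, so $x\mapsto\gamma_{s,x}(t)=e_t(\gamma_{s,x})$ is Borel for each $t$.

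The real work is the semigroup property~(3). The key structural fact is \emph{concatenation stability} of $\Gamma$: if $\gamma\in\Gamma(r,x)$ then its restriction to $[s,T]$ lies in $\Gamma(s,\gamma(s))$, and conversely any $\gamma\in\Gamma(r,x)$ can be cut at $s$ and reglued with any element of $\Gamma(s,\gamma(s))$ to produce another element of $\Gamma(r,x)$ — this is just that the differential inclusion~\eqref{E:DIFFINCL} is autonomous in its dependence on past/future and that absolute continuity is preserved under concatenation at a single time. I would then show that the lexicographic selection \emph{commutes} with this cut-and-reglue. The delicate point is that the finitely many functionals $g_i$ on $\C([r,T];\Omega)$ do not split cleanly into a ``$[r,s]$ part'' and an ``$[s,T]$ part''. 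I would handle this by refining the selection: instead of a single sequence $\{g_i\}$, I would use, for each dyadic rational $q\in[0,T]$, a separate countable separating family $\{g_i^q\}$ of Lipschitz functionals on $\C([q,T];\Omega)$, and perform the lexicographic maximization on $\Gamma(r,x)$ by first running through all $g_i^{q}$ for the \emph{smallest} relevant dyadic $q=r$ that also control the future beyond any given $s$, arranged so that the selected curve on $[r,T]$, restricted to $[s,T]$, is forced to be the lexicographic maximizer among \emph{all} curves in $\Gamma(s,\gamma(s))$. Concretely: order the test functionals so that, at stage depending on $s$, one has already determined $\gamma|_{[r,s]}$ (hence the value $\gamma(s)$) and from that stage on the remaining functionals act only on the future and reproduce exactly the canonical selection out of $\gamma(s)$ at time $s$. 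Running the argument at all dyadic $s$ and then using continuity of the curves and density of the dyadics in $[0,T]$ upgrades \eqref{E:SEMIGROUP} from dyadic $s$ to all $s\in[r,t]$.

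I expect step~(3) to be the main obstacle, specifically the bookkeeping that makes the one selection procedure on $[r,T]$ restrict to \emph{the} selection procedure on $[s,T]$ simultaneously for every dyadic $s$. The clean way to organize this is to define the selector not by a fixed enumeration but by a \emph{well-ordering compatible with restriction}: think of a curve as determined by its values at the countable set of dyadic times (plus continuity), and define, for each dyadic time $t_k$ in increasing order, $\FLOW(t_k,r,x)$ as the value at $t_k$ of the lexicographic maximizer over the (still nonempty, compact) set of solutions in $\Gamma(r,x)$ whose values at $t_1,\dots,t_{k-1}$ have already been fixed to the previously selected ones — maximizing, say, a fixed injective bounded continuous $\R^d\to\R$ composition $\phi(\gamma(t_k))$. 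Measurability at each stage is Theorem~\ref{T:MMT} again (the constraint set is a weakly measurable compact-valued map by Lemma~\ref{L:INTMEAS}), nonemptiness is the nested-compacts argument, and single-valuedness in the limit over all dyadics follows from continuity of curves. With this formulation the semigroup identity is essentially automatic: the constraints defining the selection out of $(r,x)$, once the trajectory has reached $(s,\FLOW(s,r,x))$, are literally the constraints defining the selection out of $(s,\FLOW(s,r,x))$, because the functional $\phi(\gamma(t_k))$ for $t_k\ge s$ depends only on the future and the optimization set factorizes by concatenation stability. The remaining routine verifications — that the iterated-maximization curve is genuinely an element of $\Gamma$, that the resulting $x\mapsto\FLOW(t,s,x)$ is Borel as a countable-stage limit of measurable selectors, and that \eqref{E:SEMIGROUP} passes to non-dyadic $s$ by continuity — I would relegate to short paragraphs.
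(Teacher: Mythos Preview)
You have the right scaffolding --- iterated application of Theorem~\ref{T:MMT}, nested nonempty compacts, Lemma~\ref{L:INTMEAS} for the intersection --- and you correctly isolate the semigroup property~(3) as the real difficulty. But your proposed repair for~(3) has genuine gaps.

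First, the device of maximizing $\phi(\gamma(t_k))$ for ``a fixed injective bounded continuous $\phi\colon\R^d\to\R$'' cannot work for $d\GS 2$: no such $\phi$ exists (a continuous injection of $\R^d$ into $\R$ is ruled out by an elementary connectedness argument). You would need at least $d$ separate scalar maximizations per dyadic time, which changes the bookkeeping. Second, ``each dyadic time $t_k$ in increasing order'' is not an enumeration: the dyadics in $[0,T]$ are dense, so there is no order-preserving bijection with $\N$. Any actual enumeration will interleave times before and after a given $s$, and then your claim that the constraints for the selection out of $(r,x)$ restrict to ``literally the constraints defining the selection out of $(s,\FLOW(s,r,x))$'' no longer holds. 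Third, the passage from dyadic $s$ to all $s$ ``by continuity'' is unjustified: while $s\mapsto\FLOW(s,r,x)$ is continuous (it is a solution curve), you would need joint continuity of $(s,y)\mapsto\FLOW(t,s,y)$, and the selection is only Borel in $y$.

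The idea you are missing, and which the paper uses, is to choose functionals that are \emph{additive over time intervals}. The paper maximizes
\[
	f_{\lambda,\varphi}(\gamma)
		= \int_s^T \exp(-\lambda t)\,\varphi\big(\gamma(t)\big)\,dt,
\]
for countably many pairs $(\lambda,\varphi)$. Because the integral splits as $\int_r^T = \int_r^s + \int_s^T$ and because of the cut-and-splice property you already noted, a maximizer over $\Gamma(r,x)$, restricted to $[s,T]$, is automatically a maximizer over $\Gamma\big(s,\gamma(s)\big)$ --- for \emph{every} $s\in[r,T]$, not just dyadic ones. This gives the semigroup property directly, with no limiting argument. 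The remaining issue is to show that countably many such functionals suffice to pin down a single curve; this is where Lemma~\ref{L:LERCH} (Lerch's theorem on the Laplace transform) enters: equality of all $f_{\mu_n,g_i}$ for a suitable sequence $(\mu_n)$ and a separating family $\{g_i\}\subset\CB(\Omega)$ forces $g_i(\gamma(t))=g_i(\eta(t))$ for all $t$, hence $\gamma=\eta$.
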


%========== REMARK
\begin{remark}
Theorem~\ref{T:SELECTION} does not state that the map $x \mapsto \FLOW(t, s, x)$
is injective for all $0\LS s\LS t\LS T$, nor that flows lines are unique for all
$x\in \Omega$. The semigroup property \eqref{E:SEMIGROUP} does imply, however,
that whenever two characteristics starting a distinct locations in $\Omega$ meet
at some later time, then the integral curves will coincide from that time on.
Indeed suppose that $x_1, x_2 \in \Omega$ and $r_1, r_2, s\in I$ are such that
\[
	\FLOW(s,r_1,x_1) = \FLOW(s,r_2,x_2) =: x_m,
\]
with $s\GS r_1, r_2$. Applying \eqref{E:SEMIGROUP} twice, we observe that
\begin{align*}
	\FLOW(t,r_1,x_1) = \FLOW(t,s,x_m) = \FLOW(t,r_2,x_2)
	\quad\text{for all $s\LS t\LS T$.}
\end{align*}
Using language from \cite{BianchiniBonicatto2017}, we say the flow lines are
\emph{forward untangled}.
\end{remark}

%========== REMARK
\begin{remark}
We emphasize again that while the theory outlined in the Introduction provides
sufficient conditions in terms of regularity of $\VELO$ so that flow lines are
unique, here we make only minimal assumptions on $\VELO$, which cannot rule out
non-uniqueness. But then we \emph{select} integral curves so that we are still
able to define a flow.
\end{remark}

%========== PROOF
\begin{proof}[Proof of Theorem~\ref{T:SELECTION}] The proof follows the approach
developed in \cites{CardonaKapitanski2017a,CardonaKapitanski2017b}. It is based
on a repeated application of the Measurable Maximum Theorem. Therefore we start
by particularizing Theorem~\ref{T:MMT} to the situation at hand: Recall that
$\Omega \subset \R^d$ is closed. We consider the measurable space
\[
	(A, \mathcal{A}) := \big( \Omega, \BOREL(\Omega) \big)
	\quad\text{with $\BOREL(\Omega)$ the Borel $\sigma$-algebra.}
\]
As mentioned before, open/closed sets and neighborhoods in $\Omega$ are
understood with respect to the relative topology. We then consider the Banach
spaces
\[
	\BAN := \C([s,T]; \R^d) 
	\quad\text{endowed with the $\sup$-norm,}
\]
with $s\in I$. Notice that $\C([s,T]; \Omega)$ is a separable metrizable space,
as required by Theorem~\ref{T:MMT}. We define the set-valued map $x \mapsto
\Gamma(s,x)$ with $x\in\Omega$ as
\[
	\Gamma(s,x) := \{ \gamma \in \C([s,T];\R^d) \colon 
		\text{$\gamma$ is a Filippov solution of \eqref{E:DIFFEQN2}} \}.
\]
%
% To simplify notation, we also write $\Gamma(x) := \Gamma(0,x)$ for $x\in
% \Omega$.
Then the conditions of Theorem~\ref{T:MMT} are fulfilled, for any $s\in I$
fixed:
\begin{enumerate}[parsep=1ex, label=(A\theenumi)]

\item \emph{For all $x\in \Omega$ the set $\Gamma(s,x)$ is nonempty and compact.}
\\[1ex]
This follows from Theorem~\ref{T:DEIMLING} and Proposition~\ref{P:COMPATIBLE},
which checks that the assumptions of Theorem~\ref{T:EXISTENCE} are satisfied.
Compactness (with respect to the $\sup$-norm) follows from Arzel\`{a}-Ascoli
theorem and the fact that solutions of \eqref{E:DIFFEQN2} are Lipschitz
continuous since the velocity $\VELO$ is locally bounded.

\item \emph{The map $x\mapsto \Gamma(s,x)$ for $x\in \Omega$ is weakly (Borel) 
measurable.}
\\[1ex]
By Theorem~\ref{T:DEIMLING}, the map $x\mapsto \Gamma(s,x)$ is upper
semicontinuous, hence
\[
	\text{$\Gamma(s,\cdot)^-(W)$ is closed in $\Omega$ whenever $W \subset 
		\C([s,T]; \R^d)$ is closed;} 
\]
recall the equivalent definitions of upper semicontinuity in Lemma~\ref{L:USC}.
This implies that the map is measurable with respect to the Borel algebra
$\BOREL(\Omega)$, which is generated by the closed subsets of $\Omega$; see
Definition~\ref{D:MEAS}. On the other hand, measurability implies weak
measurability; see Lemma~\ref{L:EQUIMEAS}.
\end{enumerate}

We will apply Theorem~\ref{T:MMT} to functionals of the form
\begin{equation}
	f_{\lambda,\varphi}(\gamma) 
		:= \int_s^T \exp(-\lambda t) \varphi\big( \gamma(t) \big) \,dt
	\quad\text{for all $\gamma \in \C([s,T];\R^d)$}
\label{E:FLV}
\end{equation}
with $\lambda>0$ and $\varphi \in \CB(\Omega)$, for any $s\in[0,T]$. The
functional $f_{\lambda,\varphi}$ is well-defined. It is linear and continuous
with respect to the topology induced by the $\sup$-norm. Thus
$f_{\lambda,\varphi}$ is, in fact, a Carath\'{e}odory function $\Omega \times
\BAN \longrightarrow \R$ (which does not explicitly depend on the position $x\in
\Omega$). Applying Theorem~\ref{T:MMT}, we obtain that the set-valued map
$\Gamma_{\lambda,\varphi} \colon I\times\Omega \longrightarrow \POWER(\BAN)$ of
maximizers of $f_{\lambda,\varphi}$, defined by
\begin{equation}
	\Gamma_{\lambda,\varphi}(s,x) := \ARGMAX\Big\{ 
		f_{\lambda, \varphi}(\gamma) \colon \gamma \in \Gamma(s,x) \Big\}
	\quad\text{for $(s,x) \in I \times \Omega$,}
\label{E:GLV}
\end{equation}
again has the following properties, for any $s\in I$ fixed:
\begin{enumerate}[parsep=1ex]

\item \emph{For all $x \in \Omega$ the set $\Gamma_{\lambda,\varphi}(s,x)$ is
nonempty and compact.}
	
\item \emph{The map $x\mapsto \Gamma_{\lambda,\varphi}(x)$ for $x\in \Omega$ is
weakly measurable.}

\end{enumerate}
In particular, we can now repeat the procedure above, substituting the
maximizers-valued function $\Gamma_{\lambda,\varphi}$ for $\Gamma$, with a
different choice of $\lambda'>0$ and $\varphi' \in \CB(\Omega)$. Notice that
since $\Gamma(s,x)$ is compact and $f_{\lambda,\varphi}$ continuous, the $\sup$
of $f_{\lambda,\varphi}$ is attained, which explains why the set of maximizers
$\Gamma_{\lambda,\varphi}(s,x)$ is nonempty for all $(s,x)$. It is compact
because it is a closed (by continuity of $f_{\lambda, \varphi}$) subset of the
compact set $\Gamma(s,x)$.

\medskip

The approach above does not directly use that the family of curves consists of
solutions of the differential equation \eqref{E:DIFFEQN2}. We now highlight two
crucial properties of these curves and show that they are preserved under the
maximization:
\begin{enumerate}[parsep=1ex, label=(A\theenumi)]
\setcounter{enumi}{2}

\item[(A3a)] \emph{For any $0\LS r\LS s\LS T$ and any $\gamma \in \Gamma(r,x)$
with $x\in \Omega$, we have that}
\[
	\gamma|_{[s,T]} \in \Gamma\big( s, \gamma(s) \big).
\]
%
% \\[1ex]
Indeed, any flow line starting at time $r$ at the position $x$ is described by a
curve in $\C([r,T]; \Omega)$ that is a Filippov solution of the differential
equation on that time interval. That is, the function is absolutely continuous,
thus differentiable a.e., and satisfies a differential inclusion for almost
every time. This is preserved when restricted to a smaller time interval
$[s,T]$, and that part of the curve then belongs to the funnel of flow lines
that start at time $s$ at position $\gamma(s)$. This follows immediately from
the definition.

\item[(A3b)] \emph{For any $0\LS r\LS s\LS T$, any $\gamma\in \Gamma(r,x)$ with
$x\in\Omega$, and any $\eta \in \Gamma(s,\gamma(s))$, we have that the spliced
curve $\gamma\bowtie\eta$ belongs to $\Gamma(r,x)$, where}
\begin{equation}
	\gamma\bowtie\eta(t) := \begin{cases}
		\gamma(t) & \text{for $r\LS t\LS s$,}
\\
		\eta(t) & \text{for $s\LS t\LS T$.}
	\end{cases}
\label{E:BOWTIE}
\end{equation}
%
% \\[1ex]
The curves $\gamma$ and $\eta$ are absolutely continuous on $[r,T]$ and $[s,T]$,
respectively. Restricting $\gamma$ to $[r,s]$ and joining it with $\eta$ at time
$s$, we preserve the absolute continuity because $\gamma(s) = \eta(s)$, by
construction. Therefore $\gamma\bowtie\eta$ is again differentiable a.e.\ and
satisfies the required differential inclusion for almost all times in $[r,T]$,
which proves the claim. Note that if there was a unique solution starting at
time $s$ at location $\gamma(s)$, then $\eta$ would have to coincide with
$\gamma$ on the time interval $[s,T]$, thus $\gamma\bowtie\eta = \gamma$. The
splicing of curves is possible because our solution concept requires only that
the differential inclusion/equation  be satisfied almost everywhere in time.
\end{enumerate}

We claim that (A3a)/(A3b) hold with $\Gamma$ replaced by $\Gamma_{\lambda,
\varphi}$; recall \eqref{E:GLV} and \eqref{E:FLV}. Consider a curve $\gamma \in
\Gamma_{\lambda, \varphi}(r,x)$ with $x\in \Omega$. We must show that
$\gamma|_{[s,T]}$ maximizes the functional $f_{\lambda, \varphi}$ over the set
$\Gamma(s, \gamma(s))$, for any $0\LS r\LS s\LS T$. Note that this is not
obvious since the maximization problems for different $s$ are defined over
different sets, so the corresponding maximizers need not be related. We know,
however, that for every $\eta \in \Gamma(s,\gamma(s))$, the spliced curve
$\gamma\bowtie\eta$ belongs to $\Gamma(r,x)$ because of (A3b). Since $\gamma$
maximizes $f_{\lambda, \varphi}$ over the set $\Gamma(r,x)$, we have
\begin{align}
	\int_r^T \exp(-\lambda t) \varphi\big( \gamma(t) \big) \,dt
		= f_{\lambda, \varphi}(\gamma)
		& \GS f_{\lambda, \varphi}(\gamma\bowtie\eta)
\label{E:INQQ}\\
		& = \int_r^T \exp(-\lambda t) \varphi( \gamma\bowtie\eta(t) 
			\big) \,dt
\nonumber
\end{align}
By definition~\eqref{E:BOWTIE}, the second integral can be decomposed as
\[
	\int_r^T \exp(-\lambda t) \varphi( \gamma\bowtie\eta(t) \big) \,dt
		= \int_r^s \exp(-\lambda t) \varphi( \gamma(t) \big) \,dt
		+ \int_s^T \exp(-\lambda t) \varphi( \eta(t) \big) \,dt.
\]
Decomposing the first integral in \eqref{E:INQQ} and simplifying, we find that
\begin{align*}
	f_{\lambda, \varphi}\big( \gamma|_{[s,T]} \big)
		= \int_s^T \exp(-\lambda t) \varphi\big( \gamma(t) \big) \,dt
		& \GS \int_s^T \exp(-\lambda t) \varphi( \eta(t) \big) \,dt
\\
		& \vphantom{\int}
			= f_{\lambda, \varphi}(\eta)
	\quad\text{for all $\eta\in\Gamma\big( s,\gamma(s) \big)$,}
\end{align*}
which proves that indeed $\gamma|_{[s,T]} \in \Gamma_{\lambda, \varphi}(s,
\gamma(s))$. This is statement (A3a).

Consider now $\gamma \in \Gamma_{\lambda, \varphi}(r,x)$ and $\eta \in
\Gamma_{\lambda, \varphi}(s, \gamma(s))$ for some $s\in I$. We must show that
the spliced curve $\gamma\bowtie\eta$ maximizes $f_{\lambda, \varphi}$ over
$\Gamma(r, x)$. Since we have just proved that $\gamma_{[s,T]} \in
\Gamma_{\lambda, \varphi}(s,\gamma(x))$ and by choice of $\eta$, we have the
following identity:
\[
	\int_s^T \exp(-\lambda t) \varphi( \gamma(t) \big) \,dt
		= \max_{\zeta \in \Gamma(s, \gamma(s))} f_{\lambda, \varphi}(\zeta)
		= \int_s^T \exp(-\lambda t) \varphi( \eta(t) \big) \,dt.
\]
Decomposing the integrals again, we obtain
\begin{align*}
	f_{\lambda, \varphi}(\gamma\bowtie\eta)
		%\int_r^T \exp(-\lambda t) \varphi\big( \gamma\bowtie\eta(t) \big) \,dt
		&= \int_r^s \exp(-\lambda t) \varphi( \gamma(t) \big) \,dt
			+ \int_s^T \exp(-\lambda t) \varphi( \eta(t) \big) \,dt
\\
		&= \int_r^s \exp(-\lambda t) \varphi( \gamma(t) \big) \,dt
			+ \int_s^T \exp(-\lambda t) \varphi( \gamma(t) \big) \,dt
		= f_{\lambda, \varphi}(\gamma).
\end{align*}
Since $\gamma$ maximizes $f_{\lambda, \varphi}$ over $\Gamma(r,x)$, so does
$\gamma\bowtie\eta$. This is statement (A3b).

To finish the proof, we proceed in three steps.

\medskip

\textbf{Step~1.} Let $\{ g_i \}_{i\in\N}$ be a countable collection of Lipschitz
continuous functions of bounded support that are closed under multiplication and
strongly separates points on the separable metric space $\Omega\subset\R^d$, endowed with
the Euclidean distance; see Remark~\ref{R:FAMILY}. Consider also a sequence
$(\mu_n)_{n\in\N}$ of distinct complex numbers $\mu_n$ satisfying condition
\eqref{E:SUMCOND}. We now fix some enumeration
\[
	(\lambda_k, \varphi_k) := (\mu_{n_k}, g_{i_k})
\]
of all pairs $(\mu_n, g_i)$. We define recursively the sets $\Gamma_0(s,x) :=
\Gamma(s,x)$ and
\[
	\Gamma_k(s,x) := \ARGMAX\Big\{ f_{\lambda_k, \varphi_k}(\gamma) \colon
		\gamma \in \Gamma_{k-1}(s,x) \Big\}
	\quad\text{with $(s,x) \in I\times\Omega$,}
\]
for all $k\in\N$. Finally, we define the intersections $\Gamma_*(s,x) :=
\bigcap_{k\in\N} \Gamma_k(s,x)$. 

As explained above, all $\Gamma_k$ have the properties (A1)-(A3b). Since
$\C([s,T];\R^d)$ is a Banach space (complete) with respect to uniform
convergence, and since the sets $\Gamma_k(s,x)$ are decreasing, nonempty, and
compact, the intersection $\Gamma_*(s,x)$ is again nonempty (and compact), for
every $(s,x) \in I\times\Omega$. Because of Lemma~\ref{L:INTMEAS}, we conclude
that for all $s\in I$, the map $x\mapsto \Gamma_*(s,x)$ is (Borel) measurable.

\medskip

\textbf{Step~2.} We claim that for each $(s,x) \in I\times \Omega$, the set
$\Gamma_*(s,x)$ contains precisely one element, i.e., one curve $\gamma \in
\C([s,T];\R^d)$ that is a Filippov solution of \eqref{E:DIFFEQN2}. Indeed fix
any $(s,x)$ and consider $\gamma, \eta \in \Gamma_*(s,x)$. By construction, we
have
\[
	f_{\mu_n, g_i}(\gamma) = f_{\mu_n, g_i}(\eta)
	\quad\text{for all $i,n \in \N$.}
\]
Defining $F_{\gamma, i}(t) := \int_0^t g_i(\gamma(r)) \,dr$ for $t\GS 0$
(extending $g_i(\gamma)$ by zero outside $[s,T]$), we obtain a function in
$\LIP_0(\R_+; \R)$. Notice that $\gamma$ remains bounded (see
Remark~\ref{R:GRONWALL}) and $g_i$ has bounded support. We define $F_{\eta, i}$
in a similar way. Then
\[
	\int_0^\infty \exp(-\mu_n t) \,d(F_{\gamma,i}-F_{\eta,i})(t)
		= \int_s^T \exp(-\mu_n t) \Big( g_i\big( \gamma(t) \big) 
			-g_i\big( \eta(t) \big) \Big) \,dt 
		= 0
\]
for all $n\in\N$, for any $i\in\N$; see Remark~\ref{R:RNP}. Applying
Lemma~\ref{L:LERCH}, we get
\[
	F_{\gamma,i}(t) = F_{\eta,i}(t)
	\quad\text{for all $t\GS 0$,}
\]
which implies that $g_i(\gamma(t)) = g_i(\eta(t))$ for a.e.\ $t\in[s,T]$, by
differentiation. But since both $\gamma$ and $\eta$ are continuous on $[s,T]$,
and $g_i$ is continuous as well, we get
\[
	g_i\big( \gamma(t) \big) = g_i\big( \eta(t) \big)
	\quad\text{for all $t\in[s,T]$ and all $i\in\N$.}
\]
The collection $\{g_i\}_{i\in\N}$ strongly separates points, by assumption.
Hence $\gamma(t) = \eta(t)$ for all $t\in[s,T]$. This proves that
$\Gamma_*(s,x)$ contains only one curve.

\medskip

\textbf{Step~3.} For all $(r,x) \in \Omega$, we now define the flow
\[
	\FLOW(t,r,x) := \gamma(t)
	\quad\text{for all $t\in[r,T]$,}
\]
where $\gamma$ is the unique curve in $\Gamma_*(r,x)$. Then $t\mapsto
\FLOW(t,r,x)$ is a Filippov solution of \eqref{E:DIFFEQN2}, by construction.
Moreover, since the map $x \mapsto \FLOW(t,r,x)$ is the composition of the
(Borel) measurable set-valued function $x\mapsto \Gamma_*(r,x)$ with the
evaluation
\[
	e_t \colon \C([r,T];\R^d) \longrightarrow \Omega
\]
defined by $e_t(\gamma) := \gamma(t)$ for all $t\in[r,T]$, which is continuous,
it is Borel measurable. It only remains to establish the semigroup property
\eqref{E:SEMIGROUP}. Notice that if $\gamma \in \Gamma_*(r,x)$ for some $x \in
\Omega$, then $\gamma \in \Gamma_k(r,x)$ for all $k\in\N$, by definition of
$\Gamma_*(r,x)$. Then
\[
	\gamma|_{[s,T]} \in \Gamma_k\big( s,\gamma(s) \big)
	\quad\text{for all $0\LS r\LS s\LS T$ and $k\in\N$}
\]
because of property (A3a), which implies that $\gamma|_{[s,T]} \in
\Gamma_*(s,\gamma(s))$. But $\Gamma_*(s,\gamma(s))$ contains exactly one curve:
the one we have used to define the map $t\mapsto\FLOW(t,s,\gamma(s))$, which
must therefore coincide with $\gamma|_{[s,T]}$,i.e., with $\gamma \in
\Gamma_*(r,x)$ restricted to $[s,T]$. Since $\FLOW(t,r,x) = \gamma(t)$ for all
$0\LS r\LS t\LS T$ (in particular, for $t=s$), we get
\[
	\FLOW(t,r,x) = \gamma(t) = \FLOW\big( t,s,\gamma(s) \big)
		= \FLOW\big( t,s, \FLOW(s,r,x) \big)
\]
for all $0\LS r\LS s\LS t\LS T$ and $x\in\Omega$. This completes the proof.
\end{proof}

%========== COROLLARY
\begin{corollary}
Let $I := [0,T]$ for some $T>0$ and $\Omega \subset \R^d$ closed. Suppose that a
velocity field $\VELO \in \L^\infty_\LOC(I\times \Omega; \R^d)$ as in
Theorem~\ref{T:SELECTION} is given, with associated flow map $\FLOW$ as defined
there. For any finite Borel measure $\bar{\RHO}\GS 0$ on $\Omega$, we define
\begin{equation}
	\RHO(t,\cdot) := \FLOW(t,0,\cdot)\#\bar{\RHO}
	\quad\text{for all $t\in I$,}
\label{E:RHOT}
\end{equation}
where $\#$ denotes the push-forward operator; see \eqref{E:PUH}. Then $\RHO$ is
a solution of the continuity equation \eqref{E:CONTEQN} in distributional sense,
with initial data $\RHO(0,\cdot) = \bar{\RHO}$.
\end{corollary}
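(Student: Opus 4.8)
The plan is to carry out the ``easy'' half of the superposition principle: lift $\bar\RHO$ to a Borel measure $\eta$ on the space of curves, supported on the selected integral curves, and then integrate the elementary chain rule along those curves to recover the distributional form of \eqref{E:CONTEQN}. First I would set $\BAN := \C([0,T];\R^d)$ with the $\sup$-norm, a Polish space whose Borel $\sigma$-algebra is generated by the evaluation maps $e_t(\gamma) := \gamma(t)$ (indeed by those with $t$ rational, by continuity). Define $\Phi \colon \Omega \longrightarrow \BAN$, $\Phi(x) := \FLOW(\cdot,0,x)$. Since $e_t \circ \Phi = \FLOW(t,0,\cdot)$ is Borel measurable for every fixed $t$ by Theorem~\ref{T:SELECTION}(1), the map $\Phi$ is Borel measurable, so $\eta := \Phi\#\bar\RHO$ is a well-defined finite nonnegative Borel measure on $\BAN$ of total mass $\bar\RHO(\Omega)$. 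Using the composition rule for push-forwards together with \eqref{E:RHOT}, one gets $\RHO(t,\cdot) = e_t\#\eta$ for all $t\in I$; and by Theorem~\ref{T:SELECTION}(2) the measure $\eta$ is concentrated on $\Omega$-valued curves $\gamma$ solving \eqref{E:DIFFEQN2} with $s = 0$, each of which is Lipschitz with speed controlled through \eqref{E:GROWTH} (cf.\ Remark~\ref{R:GRONWALL}) and satisfies $\gamma(0) = x$ when $\gamma = \Phi(x)$.

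Next I would fix a test function $\varphi \in \C^\infty_c\big([0,T)\times\R^d\big)$. For $\eta$-a.e.\ $\gamma$ the map $t\mapsto\varphi(t,\gamma(t))$ is absolutely continuous, with $\tfrac{d}{dt}\varphi(t,\gamma(t)) = \partial_t\varphi(t,\gamma(t)) + \VELO(t,\gamma(t))\cdot\nabla\varphi(t,\gamma(t))$ for a.e.\ $t$, using $\dot\gamma(t) = \VELO(t,\gamma(t))$ a.e.; integrating over $[0,T]$, using $\varphi(T,\cdot)\equiv 0$, and recalling $\gamma(0) = x$, gives
\[
	-\varphi(0,x) = \int_0^T \Big[ \partial_t\varphi\big(t,\gamma(t)\big) + \VELO\big(t,\gamma(t)\big)\cdot\nabla\varphi\big(t,\gamma(t)\big) \Big]\,dt
\]
for $\bar\RHO$-a.e.\ $x$, with $\gamma = \Phi(x)$. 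At each time $t$ this integrand is supported in $\{\gamma(t)\in K\}$, with $K$ the ($x$-)support of $\varphi$, and bounded there by $\|\partial_t\varphi\|_\infty + c(t)\big(1+\sup_{y\in K}|y|\big)\|\nabla\varphi\|_\infty \in \L^1((0,T))$ uniformly in $\gamma$ (again \eqref{E:GROWTH}); Fubini therefore applies, and integrating the identity over $\eta$ — equivalently over $\bar\RHO$ through $\Phi$ — and rewriting the inner curve integral via $\RHO(t,\cdot) = e_t\#\eta$ yields
\[
	\int_0^T\!\!\int_\Omega \big[ \partial_t\varphi(t,y) + \VELO(t,y)\cdot\nabla\varphi(t,y) \big]\,\RHO(t,dy)\,dt + \int_\Omega \varphi(0,y)\,\bar\RHO(dy) = 0,
\]
which is precisely the distributional formulation of \eqref{E:CONTEQN} with $\RHO(0,\cdot) = \bar\RHO$.

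I expect the only non-routine step to be the Borel measurability of the curve-valued selection $x\mapsto\FLOW(\cdot,0,x)$ in the first step; this is exactly where it matters that Theorem~\ref{T:SELECTION} furnishes a measurable family of selected curves and not merely one curve per starting point. A secondary point to keep in mind is the meaning of the product $\RHO\VELO$ when $\RHO(t,\cdot)$ fails to be absolutely continuous: here it is to be understood through the displayed identity, i.e.\ via testing against $\nabla\varphi$ along the curves, where $\VELO$ is evaluated pointwise (recall that $\VELO$ is a concrete function, not an equivalence class). If one prefers to work only with the differential-inclusion form $\dot\gamma(t)\in F(t,\gamma(t))$ of Theorem~\ref{T:SELECTION}(2), the same computation shows $\partial_t\RHO + \nabla\cdot\BM = 0$ with $\BM$ the push-forward of the curve velocities, which coincides with $\RHO\VELO$ as soon as $F(t,\cdot)$ is single-valued along the selected curves (e.g.\ when $\VELO$ is continuous in space).
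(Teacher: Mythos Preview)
Your argument is correct and is precisely the ``standard argument'' the paper merely cites (to \cite{DeLellis2007}): push $\bar\RHO$ forward to a measure on curves via the measurable selection, differentiate $\varphi$ along each curve, and integrate using Fubini. Your closing caveat about $\dot\gamma \in F(t,\gamma)$ versus $\dot\gamma = \VELO(t,\gamma)$ is well taken and in fact more careful than the paper, which silently identifies the two throughout (see e.g.\ \eqref{E:PARTIAL}); the distributional identity you derive holds with $\RHO\VELO$ replaced by the push-forward of the curve velocities, which is how the product should be read when $\VELO$ is genuinely discontinuous.
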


%========== PROOF
\begin{proof}
This follows from standard arguments; see \cite{DeLellis2007}, for instance.
Note that \eqref{E:RHOT} is well-defined since the map $x\mapsto \FLOW(t,0,x)$
is Borel measurable, for all $t\in I$.
\end{proof}

%========== REMARK
\begin{remark}
After possibly modifying it on a set of measure zero, we may assume that the map
$t\mapsto \RHO(t,\cdot)$ for $t\in I$ is continuous with respect to weak*
convergence of measures; see Lemma~3.7 in \cite{DeLellis2007}. If the measure
$\RHO(t,\cdot)$ is absolutely continuous with respect to $\LEB^d$, then we use
the same symbol also for its Radon-Nikod\'{y}m density.
\end{remark}

%%%%%%%%%%%%%%%%%%%%%%%%%%%%%%%%%%%%%%%%%%%%%%%%%%%%%%%%%%%%%%%%%%%%%%%%%%%%%%%%
%%%%%%%%%%%%%%%%%%%%%%%%%%%%%%%%%%%%%%%%%%%%%%%%%%%%%%%%%%%%%%%%%%%%%%%%%%%%%%%%
%%%%%%%%%%%%%%%%%%%%%%%%%%%%%%%%%%%%%%%%%%%%%%%%%%%%%%%%%%%%%%%%%%%%%%%%%%%%%%%%
%%%%%%%%%%%%%%%%%%%%%%%%%%%%%%%%%%%%%%%%%%%%%%%%%%%%%%%%%%%%%%%%%%%%%%%%%%%%%%%%
%%%%%%%%%%%%%%%%%%%%%%%%%%%%%%%%%%%%%%%%%%%%%%%%%%%%%%%%%%%% Transport Equations

\section{Transport Equations}\label{S:TE}

In this section, we first consider a very simple transport equation with
constant velocity field and investigate the well-posedness of its variational
formulation. Then we will introduce the notion of flow solution to the transport
equation \eqref{E:TREQN3} and establish their existence and uniqueness. Finally,
we will discuss stability properties and possible numerical approximations using
the Petrov-Galerkin framework.

We start with the simple transport equation
\begin{equation}
\begin{alignedat}{2}
	\partial_t U + CU & = F
		&& \quad\text{in $Q := (0,T)\times D$,}
\\
	U(0,\cdot) & = \bar{U}
		&& \quad\text{in $D$,}
\end{alignedat}
\label{E:TRSIMP}
\end{equation}
with $T>0$ and $D \subset \R^d$ a bounded Lipschitz domain, for 
\begin{equation}
	C \in \L^\infty(Q),
	\quad
	F \in \L^2(Q),
	\quad
	\bar{U} \in \L^2(D).
\label{E:DATA}
\end{equation}
Formally \eqref{E:TRSIMP} can be interpreted as a family of ODEs and solved
explicitly by
\begin{equation}
\begin{gathered}
	U(t,x) = \Bigg( \bar{U}(x) + \int_0^t F(s,x) 
		\exp\big( I(s,x) \big) \,ds \Bigg) \exp\big( -I(t,x) \big),
\\
	I(t,x) := \int_0^t C(r,x) \,dr
\end{gathered}
\label{E:EXPLICIT}
\end{equation}
for $t\in [0,T]$, with $x\in D$ \emph{fixed}. We are going to rephrase the
problem in variational form, which is more suitable for the transport equation
\eqref{E:TREQN3}. 

%========== REMARK
\begin{remark}\label{R:NONPOS}
We may assume that $C(t,x)\GS 0$ for a.e.\ $(t,x)$. Indeed if $U$ is a solution
of \eqref{E:TRSIMP}, then the rescaled function $\tilde{U}(t,x) := \exp(-\lambda
t) U(t,x)$ satisfies
\[
\begin{alignedat}{2}
	\partial_t \tilde{U} + (C+\lambda) \tilde{U} & = \tilde{F}
		&& \quad\text{in $Q$,}
\\
	\tilde{U}(0,\cdot) & = \bar{U}
		&& \quad\text{in $D$,}
\end{alignedat}
\]
where $\tilde{F}(t,x) := \exp(-\lambda t) F(t,x)$ for a.e.\ $(t,x)$. Then
$C+\lambda \GS 0$ if $\lambda \GS \|C\|_{\L^\infty(Q)}$. In particular, we can
consider the case that $C$ vanishes.
\end{remark}

For any $U,V \in \C(\bar{Q}) \cap \C^1(Q)$ we define
\begin{equation}
	\BOP_\circ U := \partial_t U + CU
	\quad\text{and}\quad
	\BOP_\circ^* V := -\partial_t V + CV,
\label{E:BOPN}
\end{equation}
with $\BOP_\circ^*$ the formal adjoint of $\BOP_\circ$. Defining the spaces
\[
	\C^1_\pm(Q) := \big\{ U \in \C(\bar{Q})\cap\C^1(Q) \colon
		U(s_\pm,\cdot) = 0 \big\}
\]
with $s_- := 0$, $s_+ := T$, and using Green's Theorem, we have that
\[
	\langle\BOP_\circ U, V\rangle_{\L^2(Q)}
		= \langle U, \BOP_\circ^* V\rangle_{\L^2(Q)}
	\quad\text{for all $U\in \C^1_-(Q)$ and $V\in \C^1_+(Q)$,}
\]
where $\langle\cdot,\cdot\rangle_{\L^2(Q)}$ is the $\L^2(Q)$-inner product. One
can show that the graph norm
\begin{equation}
	\|V\|_* := \|\BOP_\circ^* V\|_{\L^2(Q)}
\label{E:GRAH}
\end{equation}
is indeed a norm on $\C^1_+(Q)$ since 
\[
	\|V\|_{\L^2(Q)} \LS 2T \|\BOP_\circ^* V\|_{\L^2(Q)}
	\quad\text{for all $V\in\C^1_+(Q)$;}
\]
see Proposition~2.2(i) in \cite{BrunkenSmetanaUrban2019}. (Note that the flow
associated to the constant velocity field in \eqref{E:TRSIMP} is trivially
$Q$-filling, as defined there.) Let $\Y$ be the closure of $\C^1_+(Q)$ with
respect to \eqref{E:GRAH}, which is a Hilbert space with inner product/norm
\[
	\langle V,W\rangle_\Y := \langle \BOP^*V, \BOP^*W\rangle_{\L^2(Q)}
	\quad\text{and}\quad
	\|V\|_\Y := \|\BOP^*V\|_{\L^2(Q)}
\]
for all $V, W \in \Y$. Here $\BOP^* \colon \Y \longrightarrow \L^2(Q)$ denotes
the continuous extension of $\BOP_\circ^*$ from $\C^1_+(Q)$ to $\Y$. We define
$\BOP \colon \L^2(Q) \longrightarrow \Y'$ by duality as $\BOP := (\BOP^*)^*$.

We can now present the variational formulation of \eqref{E:TRSIMP}, which will
be the basis of the Petrov-Galerkin approximation in Section~\ref{SS:PGA}. Let 
\begin{equation}
	b(U,V) := \langle U,\BOP^* V\rangle_{\L^2(Q)}
	\quad\text{for all $U\in \L^2(Q)$ and $V\in\Y$.}
\label{E:BILINEAR}
\end{equation}
Moreover, define the linear functional
\begin{equation}
	\ell(V) := \langle F,V\rangle_{\L^2(Q)} + \int_D \bar{U}(x) V_0(x) \,dx
	\quad\text{for all $V\in\Y$,}
\label{E:RHS}
\end{equation}
where $V_0$ is the strong $\L^2(D)$-trace of the test function $V$ on the
boundary $\{0\}\times D$. The existence of traces was proved in Proposition~2.4
in \cite{DahmenHuangSchwabWelper2011}, for instance.

%========== THEOREM
\begin{theorem}\label{T:EXI}
For functions $C\GS 0, F$ and initial data $\bar{U}$ as in \eqref{E:DATA}, let
the bilinear form $b$ and the functional $\ell$ be defined by \eqref{E:BILINEAR}
and \eqref{E:RHS}, respectively. Then 
\begin{equation}
	\sup_{U\in\L^2(Q)} \sup_{V\in\Y} 
			\frac{b(U,V)}{\|U\|_{\L^2(Q)} \|V\|_\Y}
		= \inf_{U\in\L^2(Q)} \sup_{V\in\Y} 
			\frac{b(U,V)}{\|U\|_{\L^2(Q)} \|V\|_\Y}
		= 1,
\label{E:OPTIM}
\end{equation}
so that the continuity and $\inf$-$\sup$-conditions for the form $b$ are
satisfied with optimal constant $1$. This is equivalent to the following
stability estimates:
\begin{gather*}
	\|\BOP\|_{\LINEAR(\L^2(Q), \Y')} 
		= \|\BOP^{-1}\|_{\LINEAR(\Y', \L^2(Q))} = 1, 
\\
	\|\BOP^*\|_{\LINEAR(\Y, \L^2(Q))} 
		= \|(\BOP^*)^{-1}\|_{\LINEAR(\L^2(Q), \Y)} = 1.
\end{gather*}
Here $\LINEAR$ denotes spaces of bounded linear maps, and $\Y'$ is the dual of
$\Y$. In particular, there exists a unique $U\in \L^2(Q)$ that solves equation
\eqref{E:TRSIMP} in the sense that
\begin{equation}
	b(U,V) = \ell(V)
	\quad\text{for all $V\in\Y$,}
\label{E:VARIATIONAL}
\end{equation}
and the stability estimate $\|U\|_{\L^2(Q)} \LS \|\ell\|_{\Y'}$ holds. 

Moreover, for any $W \in \L^2(Q)$ we have that
\begin{equation}
	\|U-W\|_{\L^2(Q)} = \|R_W\|_{\Y'},
\label{E:ERROR}
\end{equation}
with \emph{residual} $R_W \in \Y'$ defined for all $V\in\Y$ as
\[
	R_W(V) := b(U-W,V) = \ell(V)-b(W,V).
\]
Therefore the approximation error can be controled in terms of the residual.
\end{theorem}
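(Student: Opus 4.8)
The plan is to reduce the whole theorem to the single statement that $\BOP^* \colon \Y \longrightarrow \L^2(Q)$ is a surjective isometry. That $\BOP^*$ is an isometry is built into the definition of the norm on $\Y$, namely $\|\BOP^* V\|_{\L^2(Q)} = \|V\|_\Y$; in particular $\BOP^*$ is injective and, since $\Y$ is complete, its range $\mathcal{R} := \BOP^*(\Y)$ is a closed subspace of $\L^2(Q)$. Granting surjectivity, the rest is formal. For the continuity constant, Cauchy--Schwarz gives $b(U,V) = \langle U, \BOP^* V\rangle_{\L^2(Q)} \LS \|U\|_{\L^2(Q)} \|V\|_\Y$, so both suprema in \eqref{E:OPTIM} are $\LS 1$, with equality upon taking $U := \BOP^* V$ for any $V \neq 0$. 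For the $\inf$--$\sup$ constant, given $U \in \L^2(Q) \setminus \{0\}$, surjectivity furnishes $V \in \Y$ with $\BOP^* V = U$; then $b(U,V) = \|U\|_{\L^2(Q)}^2$ and $\|V\|_\Y = \|U\|_{\L^2(Q)}$, so the test quotient equals $1$. Hence $\inf$--$\sup \GS 1$, and as it is trivially $\LS \sup$--$\sup = 1$, all three numbers in \eqref{E:OPTIM} are $1$.

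The crux, and the only real obstacle, is thus surjectivity of $\BOP^*$, i.e.\ showing that the closed subspace $\mathcal{R}$ is all of $\L^2(Q)$. I would argue that its orthogonal complement is trivial. Since $\C^1_+(Q)$ is dense in $\Y$ and $\BOP^*$ is the continuous extension of $\BOP_\circ^*$, a function $g \in \L^2(Q)$ with $g \perp \mathcal{R}$ satisfies $\langle g, -\partial_t V + CV \rangle_{\L^2(Q)} = 0$ for every $V \in \C^1_+(Q)$. Testing against $V \in \C_c^\infty(Q)$ shows $\partial_t g = -Cg$ in the distributional sense; since $C \in \L^\infty(Q)$ this means $\partial_t g \in \L^2(Q)$, so after modification on a null set $t \mapsto g(t,\cdot)$ is continuous from $[0,T]$ into $\L^2(D)$. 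Integrating by parts in the identity above, using that $V$ vanishes on $\{T\}\times D$ while $V|_{\{0\}\times D}$ ranges over a dense subset of $\L^2(D)$, yields the initial condition $g(0,\cdot) = 0$. Then $\frac{d}{dt}\|g(t,\cdot)\|_{\L^2(D)}^2 = -2\int_D C(t,x) g(t,x)^2\,dx \LS 0$ because $C \GS 0$, whence $\|g(t,\cdot)\|_{\L^2(D)} \LS \|g(0,\cdot)\|_{\L^2(D)} = 0$ and $g \equiv 0$. Equivalently, one can display the preimage: for $f \in \L^2(Q)$ the backward problem $-\partial_t V + CV = f$, $V(T,\cdot)=0$, is solved by $V(t,x) = \int_t^T \exp(-\int_t^s C(r,x)\,dr)\, f(s,x)\,ds$, which belongs to $\L^2(Q)$ precisely because $C \GS 0$, and a mollification argument places $V$ in $\Y$ with $\BOP^* V = f$. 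This step is where the hypotheses $C \GS 0$ (monotonicity of the fiberwise ODE energy) and $C \in \L^\infty(Q)$ (to bootstrap $\partial_t g \in \L^2$) are used, and one must keep in mind that $\Y$ genuinely embeds into $\L^2(Q)$ — guaranteed by the Poincar\'{e}-type bound $\|V\|_{\L^2(Q)} \LS 2T\|V\|_\Y$ quoted before the theorem — so that all the pairings above make sense.

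It remains to collect the consequences. Once $\BOP^*$ is a surjective isometry, its dual $\BOP = (\BOP^*)^* \colon \L^2(Q) \longrightarrow \Y'$ is one as well: for $U \in \L^2(Q)$, $\|\BOP U\|_{\Y'} = \sup_{\|V\|_\Y = 1}\langle U, \BOP^* V\rangle_{\L^2(Q)} = \sup_{\|w\|_{\L^2(Q)}=1}\langle U,w\rangle_{\L^2(Q)} = \|U\|_{\L^2(Q)}$, the middle step because $\BOP^*$ carries the unit sphere of $\Y$ onto that of $\L^2(Q)$, and surjectivity of $\BOP$ follows from Riesz representation in $\Y$. This gives the four operator-norm identities, which merely restate \eqref{E:OPTIM}. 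Since $b(U,V) = \langle \BOP U, V\rangle_{\Y'\times\Y}$, the variational problem \eqref{E:VARIATIONAL} is the equation $\BOP U = \ell$ in $\Y'$; because $\ell \in \Y'$ — by Cauchy--Schwarz and boundedness on $\Y$ of the trace $V \mapsto V_0$ — it has the unique solution $U = \BOP^{-1}\ell$, with $\|U\|_{\L^2(Q)} = \|\ell\|_{\Y'}$. Finally, for any $W \in \L^2(Q)$ the residual is $R_W = \BOP(U-W)$ in $\Y'$, so $\|R_W\|_{\Y'} = \|\BOP(U-W)\|_{\Y'} = \|U-W\|_{\L^2(Q)}$, which is \eqref{E:ERROR}.
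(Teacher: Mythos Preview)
Your argument is correct. The paper's own proof is not self-contained: it simply cites Theorem~2.5 of \cite{BrunkenSmetanaUrban2019} (itself based on \cite{DahmenHuangSchwabWelper2011}) for \eqref{E:OPTIM} and the operator-norm identities, and then derives \eqref{E:ERROR} in one line from the continuity and $\inf$--$\sup$ constants, exactly as you do at the end. What you have written is essentially an unpacking of that citation: the observation that $\BOP^*$ is an isometry by definition of $\|\cdot\|_\Y$, together with surjectivity via the orthogonal-complement/energy argument (or the explicit backward formula), is precisely the mechanism behind the cited result. So there is no genuine methodological difference, only a difference in presentation --- your version is self-contained, the paper's is a reference.
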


%========== PROOF
\begin{proof}
This is \cite{BrunkenSmetanaUrban2019} Theorem~2.5, which is based on
\cite{DahmenHuangSchwabWelper2011}. Then \eqref{E:ERROR} follows from
\[
	\|U-W\|_{\L^2(Q)} \left\{ \genfrac{}{}{0pt}{}{\LS}{\GS} \right\}
		\sup_{V\in\Y} \frac{b(U-W,V)}{\|V\|_\Y}
			= \sup_{V\in\Y} \frac{R_W(V)}{\|V\|_\Y} = \|R_W\|_{\Y'},
\]
which is a consequence of the continuity and the $\inf$-$\sup$-condition
\eqref{E:OPTIM}.
\end{proof}

%========== REMARK
\begin{remark}
Given assumptions \eqref{E:DATA} on the data, the map $U$ in \eqref{E:EXPLICIT}
is well-defined for $t\in [0,T]$ and a.e.\ $x\in D$. In particular, $U$ belongs
to $\L^2(Q)$, it is differentiable with respect to $t$, and $\partial_t U =
-CU+F$ belongs to $\L^2(Q)$ as well. Using integration by parts, one finds that
$U$ satisfies the variational formulation \eqref{E:VARIATIONAL}. Since
variational solutions are unique, the $U$ in \eqref{E:EXPLICIT} coincides with
the unique solution to \eqref{E:VARIATIONAL}.
\end{remark}

As explained in the Introduction, we want to solve transport equations of the
form \eqref{E:TREQN3} by decoupling geometry and transport. 

%========== DEFINITION
\begin{definition}\label{D:EXB}
Let $I := [0,T]$ for $T>0$, $D\subset\R^d$ a bounded Lipschitz domain, and $Q :=
(0,T)\times D$. Consider $\VELO \in \L^\infty(\bar{Q};\R^d)$ and $\FLOW \colon
\bar{Q} \longrightarrow \bar{D}$ such that
\begin{enumerate}
	
\item For all $t\in I$, the map $x \mapsto \FLOW(t, x)$ is Borel measurable.
		
\item For all $x \in \bar{D}$, the map $t \mapsto \FLOW(t, x)$ is a Filippov
solution of 
\[
	\dot{\gamma}(t) = \VELO\big( t,\gamma(t) \big)
	\quad
	\text{for $t\in I$,}
	\quad
	\gamma(0) = x.
\]

\item The flow lines are forward untangled: for $s\in I$ and $x_1, x_2 \in
\bar{D}$
\[
	\text{$\FLOW(s,x_1) = \FLOW(s,x_2)$}
	\quad\Longrightarrow\quad
	\text{$\FLOW(t,x_1) = \FLOW(t,x_2)$ for all $s\LS t\LS T$.}
\]

\end{enumerate}
For all $t\in I$, we define a density $\RHO(t,\cdot)$ as in \eqref{E:RHOT} with
initial measure $\bar{\RHO} := \LEB^d\llcorner D$, and the space-time measure
$\sigma(dz, dt) := \RHO(t,dz) \,dt$ on $\bar{Q}$. Consider now
\begin{equation}
	c \in \L^\infty(\bar{Q}, \sigma),
	\quad
	f \in \L^2(\bar{Q}, \sigma),
	\quad
	\bar{u} \in \L^2(D).
\label{E:DATA3}
\end{equation}
We say that $u\in \L^2(\bar{Q}, \sigma)$ is a \emph{flow-solution} of
\begin{equation}
\begin{alignedat}{2}
	\partial_t u + \VELO\cdot\nabla u + cu & = f
		&& \quad\text{in $\bar{Q}$,}
\\
	u(0,\cdot) & = \bar{u}
		&& \quad\text{in $D$,}
\end{alignedat}
\label{E:TRSE}
\end{equation}
if the following is true: for all $\varphi \in \C(\bar{Q})$ it holds
\begin{equation}
	\iint_{\bar{Q}} u(t,z) \varphi(t,z) \,\RHO(t,dz) \,dt
		= \iint_Q U(t,x) \varphi\big( t,\FLOW(t,x) \big) \,dx \,dt,
\label{E:DEFU}
\end{equation}
where $U \in \L^2(Q)$ is a variational solution of \eqref{E:TRSIMP} (satisfying
\eqref{E:VARIATIONAL}) with
\begin{equation}
	C(t,x) := c\big( t,\FLOW(t,x) \big),
	\quad
	F(t,x) := f\big( t,\FLOW(t,x) \big),
	\quad
	\bar{U}(x) := \bar{u}(x)
\label{E:DATA2}
\end{equation}
for $(t,x) \in (0,T)\times D$. We assume that $c \GS 0$ $\sigma$-a.e.; cf.\
Remark~\ref{R:NONPOS}.
\end{definition}

As was shown in Theorem~\ref{T:SELECTION}, a flow $\FLOW$ as in
Definition~\ref{D:EXB} does exist, but may be non-unique. Once the flow has been
selected we have well-posedness:

%========== THEOREM
\begin{theorem}
Under the assumptions listed in Definition~\ref{D:EXB}, there exists a unique
flow-solution of the initial value problem \eqref{E:TRSE} of the transport
equation.
\end{theorem}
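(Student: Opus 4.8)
The plan is to reduce the whole statement to Theorem~\ref{T:EXI} together with the Riesz representation theorem: once the flow $\FLOW$ is fixed, the auxiliary function $U$ in Definition~\ref{D:EXB} is uniquely determined, and the defining relation \eqref{E:DEFU} is nothing but a disguised Riesz representation of a bounded linear functional on $\L^2(\bar Q,\sigma)$. The only genuine analytic input is the well-posedness of the constant-coefficient problem \eqref{E:TRSIMP}.

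First I would record the change-of-variables identity on which everything rests. Write $\Psi\colon Q \to \bar Q$, $\Psi(t,x):=(t,\FLOW(t,x))$. Since $t\mapsto\FLOW(t,x)$ is continuous and $x\mapsto\FLOW(t,x)$ is Borel, $\FLOW$ is jointly Borel measurable (a standard fact about Carath\'eodory maps, e.g.\ by approximating $\FLOW(t,x)$ by the step-in-time functions $\sum_k \ONE_{[t_k,t_{k+1})}(t)\FLOW(t_k,x)$), hence so is $\Psi$. Because $\RHO(t,\cdot)=\FLOW(t,\cdot)\#(\LEB^d\llcorner D)$ and $\sigma(dz,dt)=\RHO(t,dz)\,dt$ by Definition~\ref{D:EXB}, the definition of push-forward and Fubini--Tonelli (using the weak$^*$-continuity of $t\mapsto\RHO(t,\cdot)$ to ensure the inner integral is measurable in $t$) give, for every nonnegative Borel $g$ on $\bar Q$,
\[
	\iint_Q g\big(t,\FLOW(t,x)\big)\,dx\,dt
		= \int_0^T\!\!\int_{\bar D} g(t,z)\,\RHO(t,dz)\,dt
		= \iint_{\bar Q} g\,d\sigma ,
\]
i.e.\ $\Psi\#(\LEB^{d+1}\llcorner Q)=\sigma$. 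Taking $g$ to be the indicator of $\{|c|>\|c\|_{\L^\infty(\bar Q,\sigma)}\}$, of $\{c<0\}$, and then $g=|f|^2$, one reads off that the data in \eqref{E:DATA2} satisfy $C\in\L^\infty(Q)$ with $C\GS0$ a.e., $F\in\L^2(Q)$ with $\|F\|_{\L^2(Q)}=\|f\|_{\L^2(\bar Q,\sigma)}$, and $\bar U=\bar u\in\L^2(D)$. Thus \eqref{E:DATA} holds and Theorem~\ref{T:EXI} yields a \emph{unique} $U\in\L^2(Q)$ satisfying the variational identity \eqref{E:VARIATIONAL}, with $\|U\|_{\L^2(Q)}\LS\|\ell\|_{\Y'}$.

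Next I would produce $u$. Consider the linear functional $L(\varphi):=\iint_Q U(t,x)\,\varphi(t,\FLOW(t,x))\,dx\,dt$ for $\varphi\in\C(\bar Q)$. By Cauchy--Schwarz and the identity above,
\[
	|L(\varphi)| \LS \|U\|_{\L^2(Q)}\Bigg(\iint_Q \big|\varphi(t,\FLOW(t,x))\big|^2\,dx\,dt\Bigg)^{1/2}
		= \|U\|_{\L^2(Q)}\,\|\varphi\|_{\L^2(\bar Q,\sigma)} .
\]
Since $\bar Q$ is a compact metric space and $\sigma$ is a finite Borel measure on it ($\sigma(\bar Q)=T|D|$), $\C(\bar Q)$ is dense in $\L^2(\bar Q,\sigma)$, so $L$ extends uniquely to a bounded functional on $\L^2(\bar Q,\sigma)$. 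By Riesz representation there is a unique $u\in\L^2(\bar Q,\sigma)$ with $L(\varphi)=\iint_{\bar Q}u\varphi\,d\sigma$ for all $\varphi\in\L^2(\bar Q,\sigma)$, in particular for all $\varphi\in\C(\bar Q)$; that is, $u$ is a flow-solution of \eqref{E:TRSE}, and it obeys $\|u\|_{\L^2(\bar Q,\sigma)}\LS\|U\|_{\L^2(Q)}$. For uniqueness, note that $U$ in Definition~\ref{D:EXB} is the same for every flow-solution, so if $u_1,u_2$ both satisfy \eqref{E:DEFU}, subtracting gives $\iint_{\bar Q}(u_1-u_2)\varphi\,d\sigma=0$ for all $\varphi\in\C(\bar Q)$, whence $u_1=u_2$ in $\L^2(\bar Q,\sigma)$ by density.

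I do not expect a real obstacle: the substance of the statement is that Definition~\ref{D:EXB} is well posed, and the only hard theorem invoked (Theorem~\ref{T:EXI}) is cited from the literature. The most delicate bookkeeping is in the second paragraph --- joint Borel measurability of $\FLOW$ and the push-forward identity $\Psi\#\LEB^{d+1}=\sigma$, which in turn relies on \eqref{E:RHOT}, Fubini--Tonelli, and the (assumed) weak$^*$-continuity of $t\mapsto\RHO(t,\cdot)$. If one wanted to corroborate the terminology one could further check that $u(t,\FLOW(t,x))=U(t,x)$ for $\LEB^{d+1}$-a.e.\ $(t,x)\in Q$, and that when $\FLOW(t,\cdot)$ is invertible the formula \eqref{E:DEFU} reproduces the classical solution along characteristics; this is not needed for the theorem as stated.
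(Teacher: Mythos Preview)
Your proposal is correct and follows essentially the same route as the paper: verify via the push-forward identity $\Psi\#(\LEB^{d+1}\llcorner Q)=\sigma$ that the transformed data \eqref{E:DATA2} satisfy \eqref{E:DATA}, then invoke Theorem~\ref{T:EXI} to obtain the unique $U\in\L^2(Q)$. You go further than the paper by explicitly producing $u$ from $U$ via Riesz representation on $\L^2(\bar Q,\sigma)$ and proving its uniqueness by density of $\C(\bar Q)$; the paper's proof stops at the existence of $U$ and leaves the passage to $u$ implicit in the definition, so your argument is in fact more complete.
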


%========== PROOF
\begin{proof}
Our assumptions \eqref{E:DATA3} imply that Theorem~\ref{T:EXI} can be applied.
Indeed recall that the push-forward formula \eqref{E:RHOT} implies the identity
\begin{equation}
	\iint_Q g\big( t,\FLOW(t,x) \big) \,dx \,dt 
		= \iint_{\bar{Q}} g(t,z) \,\RHO(t,dz) \,dt 
\label{E:PUSHL}
\end{equation}
for all $g \in \L^\infty(I\times\Omega)$ with bounded support. From this, we get
\[
	\iint_Q |F(t,x)|^2 \,dx \,dt = \iint_{\bar{Q}} |f(t,z)|^2 \,\RHO(t,dz) \,dt
\]
(using a suitable approximation argument), which is finite. In a similar way,
one can check that the other functions in \eqref{E:DATA2} satisfy
\eqref{E:DATA}, as required.
\end{proof}

Notice that the function
\begin{equation}
	V(t,x) := \zeta\big( t,\FLOW(t,x) \big)
	\quad\text{for $(t,x) \in Q$}
\label{E:ZETAT}
\end{equation}
belongs to the test space $\Y$, for any $\zeta \in \C(\bar{Q}) \cap \C^1(Q)$.
Indeed since the function $V$ is bounded and $Q$ has finite Lebesgue measure,
and since
\begin{equation}
	\partial_t V(t,x) = \partial_t\zeta\big( t,\FLOW(t,x) \big)
		+ \VELO\big( t,\FLOW(t,x) \big) \cdot \nabla\zeta\big( t,\FLOW(t,x) \big),
\label{E:PARTIAL}
\end{equation}
by definition of the flow $\FLOW$, the claim follows from the boundedness of
$\VELO$. We can use this test function in the variational formulation
\eqref{E:VARIATIONAL} and obtain
\begin{align}
	& \iint_Q U(t,x) \Big\{ -\partial_t\zeta\big( t,\FLOW(t,x) \big)
		- \VELO\big( t,\FLOW(t,x) \big) \cdot \nabla\zeta\big( t,\FLOW(t,x) \big)
\nonumber\\
	& \hspace*{18em}
		+ C(t,x) \zeta\big( t,\FLOW(t,x) \big) \Big\} \,dx \,dt
\nonumber\\
	& \qquad
			= \iint_Q F(t,x) \zeta\big( t,\FLOW(t,x) \big) \,dx \,dt
			+ \int_D \bar{U}(x) \zeta(x) \,dx.
\label{E:PASSTO}
\end{align}
Because of \eqref{E:DATA2} and \eqref{E:PUSHL} (and another approximation
argument), this gives
\begin{align*}
	& \iint_{\bar{Q}} u(t,z) \Big\{ -\partial_t\zeta(t,z)
		- \VELO(t,z) \cdot \nabla\zeta(t,z)
		+ c(t,z) \zeta(t,z) \Big\} \,\RHO(t,dz) \,dt
\\
	& \qquad
		= \iint_{\bar{Q}} f(t,z) \zeta(t,z) \,\RHO(t,dz) \,dt
			+ \int_D \bar{u}(z) \zeta(z) \,\bar{\RHO}(dz),
\end{align*}
which is the weak formulation of the initial value problem
\[
\begin{alignedat}{2}
	\partial_t (\RHO u) + \nabla\cdot(\RHO u\VELO) + c\RHO u & = \RHO f
		&& \quad\text{in $\bar{Q}$,}
\\
	(\RHO u)(0,\cdot) & = \bar{\RHO}\bar{u}
		&& \quad\text{in $D$.}
\end{alignedat}
\]
Since $\RHO$ is a weak solution of the continuity equation
\[
\begin{alignedat}{2}
	\partial_t \RHO + \nabla\cdot(\RHO \VELO) & = 0
		&& \quad\text{in $\bar{Q}$,}
\\
	\RHO(0,\cdot) & = \bar{\RHO}
		&& \quad\text{in $D$,}
\end{alignedat}
\]
we formally get \eqref{E:TRSE} using the product rule. Therefore the notion of
flow-solution is indeed a suitable solution concept for the transport equation
\eqref{E:TRSE}.

%========== REMARK
\begin{remark}
If there exist $s\in I$ and $x_1,x_2 \in \bar{D}$ such that $\FLOW(s,x_1) = z =
\FLOW(s,x_2)$, then the solution $u$ at $(s,z)$ is found as the superposition of
the values of $u$ along all incoming flow lines, because of \eqref{E:DEFU}. This
is similar to the sticky particle dynamics for pressureless gas dynamics; see
\cite{BrenierGangboSavareWestdickenberg2013}. In particular, if all flow lines
starting from a set with positive Lebesgue measure merge at $(s,z)$, then the
density $\RHO$ becomes singular, forming a Dirac measure at $z$, for instance.
Since the flow lines are assumed to be forward untangled, however, we have that
$C(t,x_1) = C(t,x_2)$ and $F(t,x_1) = F(t,x_2)$ for all $s\LS t\LS T$; recall
\eqref{E:DATA2}. The ODEs \eqref{E:TRSIMP} at the points $x_1$ and $x_2$,
respectively, are therefore identical after the merging; see
Figure~\ref{F:UNTANGLED}.
%
% \begin{figure}[t]
% \centering
% %
% \begin{minipage}[b]{0.495\textwidth}
% \centering
% \includesvg[extractformat=pdf,height=0.5\textwidth]{flowlines.svg}
% \end{minipage}
% %
% \hfill
% %
% \begin{minipage}[b]{0.495\textwidth}
% \centering
% \includesvg[extractformat=pdf,height=0.5\textwidth]{odes.svg}
% \end{minipage}
% %
% \caption{Merging of flow lines and untangled ODEs}
% \label{F:UNTANGLED}
% \end{figure}
%
\begin{figure}[t]
\centering
\begin{minipage}[b]{0.495\textwidth}
\centering
\includegraphics[height=0.5\textwidth]{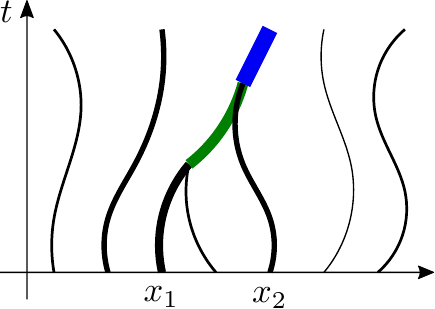}
\end{minipage}
\hfill
\begin{minipage}[b]{0.495\textwidth}
\centering
\includegraphics[height=0.5\textwidth]{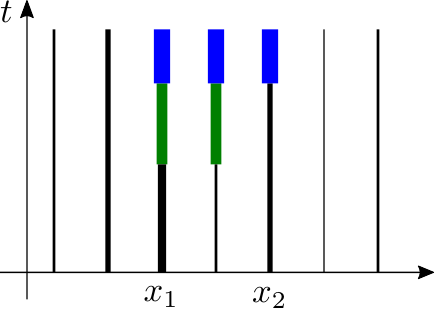}
\end{minipage}
\caption{Trajectories in physical and parameter space}
\label{F:UNTANGLED}
\end{figure}
\end{remark}

We have the following compactness result for flow solutions of \eqref{E:TRSE}.

%========== THEOREM
\begin{theorem}\label{T:COMPACT}
Let $I := [0,T]$ for $T>0$, $D \subset\R^d$ a bounded Lipschitz domain, and $Q
:= (0,T)\times D$. Consider a sequence of velocity fields $\VELO_n \in
\L^\infty(\bar{Q};\R^d)$ with corresponding flows $\FLOW_n \colon \bar{Q}
\longrightarrow \bar{D}$ whose flow lines are forward untangled. For all $t\in
I$, we define a density $\RHO_n(t,\cdot)$ as in \eqref{E:RHOT} with initial data
$\bar{\RHO} := \LEB^d \llcorner D$, and the space-time measure $\sigma_n(dz,dt)
:= \RHO_n(t,dz) \,dt$ in $\bar{Q}$. Consider
\[
	c_n \in \L^\infty(\bar{Q}, \sigma_n),
	\quad
	f_n \in \L^2(\bar{Q}, \sigma_n),
	\quad
	\bar{u}_n \in \L^2(D),
\]
and define initial data $\bar{U}_n(x) := \bar{u}_n(x)$ and functions
\begin{equation}
	C_n(t,x) := c_n\big( t,\FLOW_n(t,x) \big),
	\quad
	F_n(t,x) := f_n\big( t,\FLOW_n(t,x) \big)
\label{E:NEWDATA}
\end{equation}
for all $(t,x) \in (0,T)\times D$. We assume that $c_n \GS 0$ $\sigma_n$-a.e.;
cf.\ Remark~\ref{R:NONPOS}.

Suppose that $u_n\in \L^2(\bar{Q}, \sigma_n)$ are flow-solutions of
\begin{equation}
\begin{alignedat}{2}
	\partial_t u_n + \VELO_n\cdot\nabla u_n + c_n u_n & = f_n
		&& \quad\text{in $\bar{Q}$,}
\\
	u_n(0,\cdot) & = \bar{u}_n
		&& \quad\text{in $D$,}
\end{alignedat}
\label{E:APFLO}
\end{equation}
where $\BOP_n$ and $\BOP_n^*$ and the associated bilinear forms $b_n$ and
right-hand sides $\ell_n$ are defined as above, using the indexed quantities
$C_n$, $F_n$, and $\bar{U}_n$; see \eqref{E:BOPN}, \eqref{E:BILINEAR}, and
\eqref{E:RHS}. Let $U_n \in \L^2(Q)$ be the generator of the flow-solution $u_n$
as in \eqref{E:DEFU}.

We now make the following assumptions:
\begin{enumerate}

\item There exist $\VELO \in \L^\infty(\bar{Q}; \R^d)$ and an associated flow
$\FLOW \colon \bar{Q} \longrightarrow \bar{D}$ with
\begin{equation}
	\begin{aligned}
		\VELO_n & \longrightarrow \VELO
\\
		\FLOW_n & \longrightarrow \FLOW
	\end{aligned}
	\Bigg\rbrace
	\quad\text{pointwise a.e.\ in $\bar{Q}$.}
\label{E:CONVF}
\end{equation}
The flow lines of $\FLOW$ are forward untangled.

\item There exists $\bar{u} \in \L^2(D)$ such that 
\begin{equation}
	\bar{u}_n \WEAK \bar{u}
	 \quad\text{weakly in $\L^2(D)$.}
\label{E:WINIT}
\end{equation}

\item There exist $c \in \L^\infty(\bar{Q}, \sigma)$ and $f \in \L^2(\bar{Q},
\sigma)$ such that
\begin{alignat}{2}
	C_n & \WEAK C &&
	\quad\text{pointwise a.e.\ in $Q$,}
\label{E:CONVC}\\
	F_n & \WEAK F &&
	\quad\text{weakly in $\L^2(Q)$,}
\label{E:WRHS}
\end{alignat}
with $C_n, F_n$ and $C, F$ defined by \eqref{E:NEWDATA} and \eqref{E:DATA2},
respectively, and with density $\RHO$ and space-time measure $\sigma$ as in
Definition~\ref{D:EXB}.

\end{enumerate}
There exists a subsequence (not relabeled) and $U\in \L^2(Q)$ with $U_n \WEAK U$
weakly in $\L^2(Q)$ such that $u\in \L^2(\bar{Q}, \sigma)$ defined by
\eqref{E:DEFU} is a flow-solution of \eqref{E:TRSE}.
\end{theorem}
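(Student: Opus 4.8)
The plan is to carry out the limit at the level of the \emph{generators} $U_n \in \L^2(Q)$ rather than directly with the flow-solutions $u_n$: by construction each $U_n$ is the variational solution of the simple transport equation \eqref{E:TRSIMP} with data $C_n, F_n, \bar{U}_n$ in the sense of \eqref{E:VARIATIONAL}, and the transport geometry re-enters only at the end through the push-forward identity \eqref{E:DEFU}. Assumption~(1) guarantees that the limit objects $\VELO, \FLOW, \RHO, \sigma$ form an admissible configuration for Definition~\ref{D:EXB}. First I would prove a uniform bound $\|U_n\|_{\L^2(Q)} \LS M$, extract a subsequence with $U_n \WEAK U$ weakly in $\L^2(Q)$, pass to the limit in the variational identity to identify $U$ as the variational solution of \eqref{E:TRSIMP} with the \emph{limiting} data \eqref{E:DATA2} (built from $c, f, \bar{u}$), and finally reconstruct $u$ from $U$ through \eqref{E:DEFU}.

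For the a priori bound, Theorem~\ref{T:EXI} gives $\|U_n\|_{\L^2(Q)} \LS \|\ell_n\|_{\Y_n'}$ with $\inf$-$\sup$ constant equal to $1$ \emph{regardless of the coefficient} $C_n$; this is precisely what renders the $n$-dependence of the test spaces $\Y_n$ harmless. To bound $\|\ell_n\|_{\Y_n'}$ uniformly, I would combine the Poincar\'e-type estimate $\|V\|_{\L^2(Q)} \LS 2T\|V\|_{\Y_n}$ (Proposition~2.2(i) in \cite{BrunkenSmetanaUrban2019}, whose constant does not depend on $n$, the flow associated with \eqref{E:TRSIMP} being stationary irrespective of the coefficient) with the trace estimate, valid for $V \in \C^1_+(Q)$,
\[
	\|V_0\|_{\L^2(D)}^2 = -2\iint_Q V\,\partial_t V\,dx\,dt
		= -2\iint_Q C_n V^2\,dx\,dt + 2\iint_Q V\,\BOP_n^* V\,dx\,dt
		\LS 2\|V\|_{\L^2(Q)}\,\|V\|_{\Y_n},
\]
which holds since $V(T,\cdot)=0$ and $C_n \GS 0$. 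Together with \eqref{E:RHS} this yields $\|\ell_n\|_{\Y_n'} \LS 2T\|F_n\|_{\L^2(Q)} + 2\sqrt{T}\,\|\bar{u}_n\|_{\L^2(D)}$, and the right-hand side is bounded because the weakly convergent sequences $(F_n)$ and $(\bar{u}_n)$ are bounded in $\L^2$, by \eqref{E:WRHS} and \eqref{E:WINIT}. Hence $\|U_n\|_{\L^2(Q)} \LS M$ uniformly, and we may pass to a subsequence with $U_n \WEAK U$ weakly in $\L^2(Q)$.

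To pass to the limit, I would test \eqref{E:VARIATIONAL} for $U_n$ against a fixed $V \in \C^1_+(Q)$ --- a dense subspace of $\Y$ that lies in every $\Y_n$ --- obtaining
\[
	\iint_Q U_n\,\big( -\partial_t V + C_n V \big)\,dx\,dt
		= \iint_Q F_n V\,dx\,dt + \int_D \bar{u}_n(x)\,V_0(x)\,dx.
\]
On the left, $\iint_Q U_n(-\partial_t V)\,dx\,dt \to \iint_Q U(-\partial_t V)\,dx\,dt$ since $U_n \WEAK U$ and $\partial_t V \in \L^2(Q)$, while $\iint_Q U_n C_n V\,dx\,dt \to \iint_Q U C V\,dx\,dt$ because $U_n \WEAK U$ weakly and $C_n V \to C V$ \emph{strongly} in $\L^2(Q)$ --- here I would use \eqref{E:CONVC} together with the uniform essential bound on the $C_n$ and dominated convergence. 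On the right, $\iint_Q F_n V\,dx\,dt \to \iint_Q F V\,dx\,dt$ by \eqref{E:WRHS} and $\int_D \bar{u}_n V_0\,dx \to \int_D \bar{u}\, V_0\,dx$ by \eqref{E:WINIT}. Therefore $\langle U, \BOP^* V\rangle_{\L^2(Q)} = \ell(V)$ for all $V$ in this dense subspace, and by continuity of both sides on $\Y$ the identity extends to all $V\in\Y$. Since $C\GS 0$ (an a.e.\ limit of the $C_n\GS 0$) and $C, F, \bar{u}$ satisfy \eqref{E:DATA}, Theorem~\ref{T:EXI} identifies $U$ as \emph{the} unique variational solution of \eqref{E:TRSIMP} with the limiting data \eqref{E:DATA2}.

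Finally, to reconstruct $u$, I would observe that $\varphi \mapsto \iint_Q U(t,x)\,\varphi\big(t,\FLOW(t,x)\big)\,dx\,dt$ is linear on $\C(\bar{Q})$ and, by the push-forward identity \eqref{E:PUSHL} (so that $\|\varphi(\cdot,\FLOW(\cdot,\cdot))\|_{\L^2(Q)} = \|\varphi\|_{\L^2(\bar{Q},\sigma)}$), bounded of norm at most $\|U\|_{\L^2(Q)}$ on the dense subspace $\C(\bar{Q}) \subset \L^2(\bar{Q},\sigma)$; the Riesz representation theorem then produces a unique $u \in \L^2(\bar{Q},\sigma)$ satisfying \eqref{E:DEFU}, which is by construction a flow-solution of \eqref{E:TRSE} in the sense of Definition~\ref{D:EXB}. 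Since $U$, hence $u$, is uniquely determined by the limit data, the convergence in fact holds along the whole sequence. The main obstacle is the $n$-dependence of the test spaces $\Y_n$ and the space-time measures $\sigma_n$; it is circumvented by working only with fixed smooth test functions common to all the $\Y_n$, but this forces both a uniform bound on the data in $\Y_n'$ --- available because the $\inf$-$\sup$ constant of Theorem~\ref{T:EXI} is $1$ irrespective of $C$ and because $C_n \GS 0$ --- and the passage to the limit in $\iint_Q U_n C_n V\,dx\,dt$, a product of a merely weakly convergent sequence with a varying coefficient, which is exactly why \eqref{E:CONVC} must provide pointwise a.e.\ convergence of the $C_n$ and not just weak convergence.
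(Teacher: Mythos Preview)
Your proof is correct and follows essentially the same route as the paper's: bound the generators $U_n$ uniformly via the stability estimate of Theorem~\ref{T:EXI}, extract a weak $\L^2(Q)$ limit $U$, pass to the limit in the variational identity using that $C_nV\to CV$ strongly in $\L^2(Q)$ against the weak convergence $U_n\WEAK U$, and then reconstruct $u$. You are in fact more careful than the paper on two points: you make explicit why the bound on $\|\ell_n\|_{\Y_n'}$ is uniform in $n$ (via the $n$-independent Poincar\'e and trace estimates, using $C_n\GS 0$), and you address the $n$-dependence of the test spaces by working on the common dense core $\C^1_+(Q)$ before extending. The only genuine difference is in the last step: the paper recovers $u$ by showing $\sigma_n\WEAK\sigma$ and $u_n\sigma_n\WEAK\BM$ weak* in measures and then invoking lower semicontinuity of $(\sigma,\BM)\mapsto\sup_\zeta\iint(\zeta\,d\BM-\tfrac12|\zeta|^2\,d\sigma)$ to write $\BM=u\sigma$ with $u\in\L^2(\bar{Q},\sigma)$, whereas you produce $u$ directly from $U$ by Riesz representation. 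Your route is shorter and entirely adequate for what the theorem asserts; the paper's route yields the extra information that $u_n\sigma_n\to u\sigma$ in the sense of measures.
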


%========== REMARK
\begin{remark}
A typical situation to which Theorem~\ref{T:COMPACT} can be applied, arises when
the velocity $\VELO$ and the data $c, f$, and $\bar{u}$ in \eqref{E:TRSE} are
approximated by piecewise constant or linear finite elements over a suitable
discretization of $\Omega$. The approximate velocity field $\VELO_n$ induces an
approximate flow $\FLOW_n$, which in turn determines approximate flow solutions
$u_n$; see \eqref{E:APFLO}. Then Theorem~\ref{T:COMPACT} establishes the
convergence of $u_n$ towards a flow solution of \eqref{E:TRSE}. The crucial
assumption here is \eqref{E:CONVF}.
\end{remark}

%========== PROOF
\begin{proof}
The weak convergences \eqref{E:WINIT} and \eqref{E:WRHS} imply uniform
boundedness of the linear forms $\ell_n$ in $\Y'$. Because of the uniform
stability estimate $\|U_n\|_{\L^2(Q)} \LS \|\ell_n\|_{\Y'}$ (see
Theorem~\ref{T:EXI}), it follows that the sequence of $U_n$ is uniformly
bounded, thus weakly precompact. We can therefore extract a subsequence (not
relabeled) such that $U_n \WEAK U$ for some $U \in \L^2(Q)$. Notice that for any
$\varphi \in \C(\bar{Q})$, the map
\[
	(t,x) \mapsto \varphi\big( t, \FLOW_n(t,x) \big)
	\quad\text{for $(t,x) \in \bar{Q}$}
\]
is uniformly bounded in $\L^\infty(\bar{Q})$ and converges pointwise a.e. This
implies that
\[
	\sigma_n \WEAK \sigma
	\quad\text{weak* in the sense of measures,}
\]
for some $\sigma \in \M(\bar{Q})$ that can be disintegrated in the form
$\sigma(dz, dt) =: \RHO(t,dz) \,dt$. Moreover, the measure $\RHO$ satisfies
\eqref{E:PUSHL}. Similarly, we have the convergence
\[
	\sigma_n u_n \longrightarrow \BM
	\quad\text{weak* in the sense of measures,}
\]
for some finite measure $\BM \in \M(\bar{Q})$. Because of lower semicontinuity
of 
\[
	(\sigma,\BM) \mapsto \sup_{\zeta \in \C(\bar{Q})} \iint_{\bar{Q}} \bigg(
		\zeta \,d\BM - \frac{1}{2} |\zeta|^2 \,d\sigma \bigg) 
\]
(see Section~2.6 in \cite{AmbrosioFuscoPallara2000}), it follows that
$\BM(dz,dt) =: u(t,z) \,\RHO(t,dz) \,dt$ with 
\[
	u(t,\cdot) \in \L^2\big( \bar{D}, \RHO(t,\cdot) \big)
	\quad\text{for all $t\in I$.}
\]
In particular, the functions $U \in \L^2(Q)$ and $u \in \L^2(\bar{Q}, \sigma)$
are related by \eqref{E:DEFU}.

It remains to check that $U$ is a solution of the variational problem
\eqref{E:VARIATIONAL}. This follows from passing to the limit in the integrals
defining the bilinear forms $b_n$ and the right-hand sides $\ell_n$. For
instance, for any $V \in \Y$ the maps
\[
	(t,x) \mapsto C_n(t,x) V(t,x)
	\quad\text{for $(t,x) \in Q$}
\]
converge \emph{strongly in $\L^2(Q)$} because the first factor remains bounded
and converges pointwise a.e., by assumption \eqref{E:CONVC}. It follows that
\[
	\iint_Q U_n(t,x) C_n(t,x) V(t,x) \,dx \,dt
		\longrightarrow 
			\iint_Q U(t,x) C(t,x) V(t,x) \,dx \,dt
\]
because $U_n \WEAK U$ weakly in $\L^2(Q)$. In a similar way, we find that
\begin{align*}
	\iint_Q U_n(t,x) \partial_t V(t,x) \,dx \,dt
		& \longrightarrow 
			\iint_Q U(t,x) \partial_t V(t,x) \,dx \,dt
\\
	\iint_Q F_n(t,x) V(t,x) \,dx \,dt
		& \longrightarrow 
			\iint_Q F(t,x) V(t,x) \,dx \,dt
\\
	\int_D \bar{U}_n(x) V_0(x) \,dx
		& \longrightarrow 
			\int_D \bar{U}(x) V_0(x) \,dx,			
\end{align*}
where $V_0$ is the strong $\L^2(D)$-trace of $V\in \Y$ on the
boundary $\{0\}\times D$. Thus
\begin{equation}
	b(U,V) 
		= \lim_{n\to\infty} b_n(U_n, V)
		= \lim_{n\to\infty} \ell_n(V) = \ell(V)
\label{E:LIMIT}
\end{equation}
for all $V\in\Y$. This concludes the proof of the theorem.
\end{proof}

%========== REMARK
\begin{remark}
A sufficient condition for \eqref{E:CONVC} and \eqref{E:WRHS} is that $c_n$ and
$f_n$
\begin{itemize}

\item are Borel measurable, 

\item uniformly bounded in $\L^\infty(\bar{Q})$,

\item converge pointwise a.e.\ to $c$ and $f$, respectively.

\end{itemize}
In this case, for any $V \in \Y$ the maps
\[
	(t,x) \mapsto C_n(t,x) V(t,x) = c_n\big( t,\FLOW_n(t,x) \big) V(t,x)
	\quad\text{for $(t,x) \in Q$}
\]
converge \emph{strongly in $\L^2(Q)$} because the first factor remains bounded
and converges pointwise a.e., by assumptions \eqref{E:CONVF} and
\eqref{E:CONVC}. The same applies to 
\[
	(t,x) \mapsto F_n(t,x) = f_n\big( t,\FLOW_n(t,x) \big)
	\quad\text{for $(t,x) \in Q$.}
\]
We then pass to the limit in the integrals
\begin{align*}
	\iint_Q U_n(t,x) c_n\big( t,\FLOW(t,x) \big) V(t,x) \,dx \,dt
		& \longrightarrow 
			\iint_Q U(t,x) c\big( t,\FLOW(t,x) \big) V(t,x) \,dx \,dt,
\\
	\iint_Q f_n\big( t,\FLOW(t,x) \big) V(t,x) \,dx \,dt
		& \longrightarrow 
			\iint_Q f\big( t,\FLOW(t,x) \big) V(t,x) \,dx \,dt.
\end{align*}
We can argue as above to obtain \eqref{E:LIMIT}.

If there exists a constant $C>0$ such that the densities $\RHO_n$ satisfy the
inequalities \eqref{E:NIC} below uniformly in $n$, then a sufficient condition
for \eqref{E:WRHS} is for $f_n$ to \emph{converge strongly in $\L^2(Q)$}. In
this case, integration with respect to the space-time measure $\sigma_n$ is
equivalent to integration against the $(d+1)$-dimensional Lebesgue measure on
$Q$. Indeed, let us define the truncation operator $T_R \colon \R
\longrightarrow \R$ as
\[
	T_R(s) := \begin{cases}
		s & \text{if $|s| \LS N$}
\\
		Ns/|s| & \text{otherwise}
	\end{cases}
	\quad\text{for all $s\in\R$.}
\]
We can then estimate
\begin{multline*}
	\iint_Q \big| (1-T_R)\big( F_n(t,x) \big) \big|^2 \,dx \,dt
		= \iint_Q \Big| (1-T_R)\Big( f_n\big( t,\FLOW_n(t,x) \big) \Big) 
			\Big|^2 \,dx \,dt
\\
		= \iint_Q \big| (1-T_R)\big( f_n(t,z) \big) \big|^2 \,\RHO_n(t,dz) \,dt
		\LS C \iint_Q \big| (1-T_R)\big( f_n(t,z) \big) \big|^2 \,dz \,dt,
\end{multline*}
which is arbitrarily small uniformly in $n$ if $R>0$ is large enough. For the
function $T_R(F_n)$ we can argue as above. This gives $F_n \longrightarrow F$
strongly in $\L^2(Q)$. 
\end{remark}

%========== REMARK
\begin{remark}
As in the proof of Theorem~\ref{T:COMPACT}, we find that the maps
\[
	V(t,x) := \zeta\big( t,\FLOW_n(t,x) \big)
	\quad\text{for $(t,x)\in Q$}
\]
converge \emph{strongly in $\Y$} to the function $V$ defined in \eqref{E:ZETAT},
for any $\zeta \in \C(\bar{Q})\cap\C^1(Q)$. Indeed note that the derivative
$\partial_t V_n$ is of the form \eqref{E:PARTIAL}. We can then combine the
uniform boundedness of $(\partial_t\zeta, \nabla\zeta)$ and the velocity fields
$\VELO_n$, with the pointwise convergence \eqref{E:CONVF} to obtain strong
convergence in $\L^2(Q)$ for both $V$ and its partial derivative $\partial_t V$.
This enables us to pass to the limit in the analogue of \eqref{E:PASSTO}.
\end{remark}

%%%%%%%%%%%%%%%%%%%%%%%%%%%%%%%%%%%%%%%%%%%%%%%%%%%%%%%%%%%%%%%%%%%%%%%%%%%%%%%%
%%%%%%%%%%%%%%%%%%%%%%%%%%%%%%%%%%%%%%%%%%%%%%%%%%%%%%%%%% Stability of Flow Map

\subsection{Stability of Flow Map}\label{SS:SOFM}

We are interested in the situation where the sequence of flows $\FLOW_n$ in
Theorem~\ref{T:COMPACT} is generated by an approximation $\VELO_n$ of the
velocity field, e.g., in a numerical method. A recent result by Bianchini and
Bonicatto \cite{BianchiniGloyer2011} provides strong convergence in $\L^1(Q)$ if
the sequence of (approximate) velocity fields is uniformly of bounded variation
and nearly incompressible.

%========== DEFINITION
\begin{definition}
We say that a velocity $\VELO \in \L^\infty(Q; \R^d)$ is \emph{nearly
incompressible} if there exists a function $\RHO \in \L^\infty(Q)$ and a
constant $C>0$ such that 
\begin{equation}
	C^{-1} \LS \RHO(t,x) \LS C
	\quad\text{for a.e.\ $(t,x) \in Q$,}
\label{E:NIC}
\end{equation}
where $\RHO$ is a distributional solution of the continuity equation
\eqref{E:CONTEQN}. We will assume in the following that the initial data is
given by the constant function: $\RHO(0,x) = 1$ for all $x\in\Omega$, in which
case we will refer to $\RHO$ as a \emph{density} associated to $\VELO$. We will
also say that $\RHO$ is nearly incompressible if \eqref{E:NIC} holds.
\end{definition}
	
%========== REMARK
\begin{remark}
A sufficient condition for near incompressibility is that $\nabla\cdot\VELO$ is
bounded. Indeed the continuity equation can (formally) be rewritten as
\[
	\partial_t \RHO + \VELO\cdot\nabla \RHO = -\RHO \nabla\cdot\VELO
	\quad\text{in $Q$,}
\]
so that the change of $\log\RHO$ along the integral curves of $\VELO$ is given
by $-\nabla\cdot\VELO$. In general, near incompressibility is strictly weaker
since it allows for certain discontinuities in the velocity field, as is
relevant for hyperbolic conservation laws, for instance.
\end{remark}

%========== REMARK
\begin{remark}
Near incompressibility encodes important information about the flow: On the one
hand, the lower bound in \eqref{E:NIC} (absence of vacuum) implies that there
are no sets of positive Lebesgue measure that are not reached by the flow. This
is a weak form of surjectivity. On the other hand, the upper bound in
\eqref{E:NIC} (absence of concentrations) guarantees that no sets of positive
Lebesgue measure are mapped to small sets such as a single point. This is a weak
form of injectivity.
\end{remark}

We can now state the main stability result for approximate flow maps.

%========== THEOREM
\begin{theorem}\label{T:BRESSAN}
Let $I := [0,T]$ for $T>0$, $D \subset\R^d$ a bounded Lipschitz domain, and $Q
:= (0,T)\times D$. Consider a velocity field $\VELO \in \L^1((0,T); \BVS(D;
\R^d))$ bounded and nearly incompressible. Then there exists a unique regular
Lagrangian flow (RLF) for $\VELO$, i.e., a flow $\FLOW \colon \bar{Q}
\longrightarrow \bar{D}$ associated to $\VELO$ such that for a.e.\ $t\in(0,T)$
\[
	\text{$A\subset \bar{D}$ Borel with $|A|=0$}
	\quad\Longrightarrow\quad
	|\{ x\in\bar{D} \colon \FLOW(t,x)\in A\}| = 0.
\]
Here $|\cdot|$ denotes the $d$-dimensional Lebesgue measure. 

Moreover, consider a sequence of velocities $\VELO_n$ with the following
properties:
\begin{itemize}

\item For all $n\in\N$, we have $\VELO_n \in \L^1((0,T); \BVS(D; \R^d))$.

\item The quantity $\|\VELO_n\|_{\L^\infty(Q)}$ is bounded uniformly in $n$ and 
\[
	\VELO_n \longrightarrow \VELO
	\quad\text{strongly in $\L^1(Q)$.}
\]

\item There exists a constant $C>0$ such that for all $n\in\N$
\[
	C^{-1} \LS \RHO_n(t,x) \LS C
	\quad\text{for a.e.\ $(t,x) \in Q$,}
\]
where $\RHO_n$ is defined by \eqref{E:RHOT}, with $\FLOW_n$ the RLF generated by
$\VELO_n$.

\end{itemize}
Then $\FLOW_n \longrightarrow \FLOW$ strongly in $\L^1(Q)$.
\end{theorem}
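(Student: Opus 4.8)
This statement is Bressan's compactness conjecture, proved by Bianchini and Bonicatto \cite{BianchiniGloyer2011}; the plan is to take the renormalization property for bounded, nearly incompressible $\BVS$ velocity fields from that work as a black box and to deduce everything else by the standard superposition-principle machinery. For the existence and uniqueness of the RLF I would first invoke the deep fact that every bounded distributional solution $\RHO$ of the continuity equation \eqref{E:CONTEQN} for such a velocity field is renormalized, so that \eqref{E:CONTEQN} admits at most one bounded solution with a prescribed initial datum. A bounded solution $\RHO$ with $\RHO(0,\cdot)\equiv1$ and $C^{-1}\LS\RHO\LS C$ exists by near incompressibility, and the superposition principle represents it by a finite Borel measure on $\C([0,T];\bar{D})$ concentrated on a.c.\ solutions of \eqref{E:DIFFEQN}; near incompressibility forces the time-marginals to stay comparable to $\LEB^d$. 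A flow map is then extracted by a measurable selection exactly as in Section~\ref{S:MSP}, and uniqueness of the bounded solution $\RHO$ forces any two such flows to agree for $\LEB^d$-a.e.\ $x$ and a.e.\ $t$, which gives the unique RLF.

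For the stability part I would lift each $\FLOW_n$ to the finite Borel measure $\eta_n$ on $\C([0,T];\bar{D})$ obtained by pushing forward $\LEB^d\llcorner D$ under $x\mapsto\FLOW_n(\cdot,x)$. Since $\|\VELO_n\|_{\L^\infty(Q)}$ is bounded uniformly, every curve in $\mathrm{supp}\,\eta_n$ is uniformly Lipschitz and valued in the bounded set $\bar{D}$, so by Arzel\`{a}--Ascoli all supports lie in one compact subset of $\C([0,T];\bar{D})$; hence $\{\eta_n\}$ is tight and I may extract a subsequence with $\eta_n\WEAK\eta$ narrowly. Since $e_0$ is continuous and $e_0\#\eta_n=\LEB^d\llcorner D$, the limit satisfies $e_0\#\eta=\LEB^d\llcorner D$, and the bound $e_t\#\eta_n=\RHO_n(t,\cdot)\LS C\LEB^d$ passes to $e_t\#\eta$.

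Next I would show that $\eta$ is concentrated on a.c.\ solutions of $\dot\gamma=\VELO(t,\gamma)$: for every $n$ one has $\int\big|\gamma(t)-\gamma(0)-\int_0^t\VELO_n(s,\gamma(s))\,ds\big|\,d\eta_n=0$ because $\eta_n$ is carried by ODE solutions for $\VELO_n$, while by the uniform near incompressibility of the marginals $e_s\#\eta_n$
\[
	\int\!\!\int_0^t\big|\VELO-\VELO_n\big|\big(s,\gamma(s)\big)\,ds\,d\eta_n
		\LS C\,\|\VELO-\VELO_n\|_{\L^1(Q)}\longrightarrow 0 ;
\]
passing to the narrow limit (approximating $\VELO$ by continuous fields, for which $\gamma\mapsto\int_0^t\VELO(s,\gamma(s))\,ds$ is continuous, and using the same marginal bound for $\eta$ to control the error) gives $\int\big|\gamma(t)-\gamma(0)-\int_0^t\VELO(s,\gamma(s))\,ds\big|\,d\eta=0$. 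Thus $\eta$ is a superposition solution associated to $\VELO$; by the renormalization/uniqueness input such a superposition is unique and deterministic, so $\eta$ is the lift of the RLF $\FLOW$, and since the limit is subsequence-independent the whole sequence $\eta_n$ converges to it. To upgrade to strong $\L^1$ convergence I would test $\eta_n\WEAK\eta$ against $\gamma\mapsto|\gamma(t)|^2$, obtaining $\|\FLOW_n(t,\cdot)\|_{\L^2(D;\R^d)}\to\|\FLOW(t,\cdot)\|_{\L^2(D;\R^d)}$ for each $t$, and against $\gamma\mapsto\psi(\gamma(0))\,\gamma(t)$ with $\psi\in\C(\bar{D})$, obtaining (after density in $\L^2$) $\FLOW_n(t,\cdot)\WEAK\FLOW(t,\cdot)$ in $\L^2(D;\R^d)$; convergence of norms plus weak convergence in a Hilbert space give strong $\L^2(D;\R^d)$ convergence for each $t$, and since all maps are bounded by $\mathrm{diam}(D)$, dominated convergence in $t$ yields $\FLOW_n\to\FLOW$ in $\L^2(Q)$, hence in $\L^1(Q)$.

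I expect the genuinely hard step to be precisely the one I import wholesale: the renormalization (hence uniqueness) property for bounded, nearly incompressible $\BVS$ velocity fields, whose proof in \cite{BianchiniGloyer2011} rests on a delicate anisotropic decomposition of the velocity field along its own flow. Everything above it is routine superposition-principle bookkeeping, the only care needed being the systematic use of the uniform near-incompressibility bound to reduce integrals along curves to Lebesgue integrals and to identify the narrow limit as a genuine superposition solution rather than merely a candidate with the right time-marginals.
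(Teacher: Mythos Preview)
Your proposal is correct and matches the paper's treatment: the paper does not give its own proof but cites Bianchini--Bonicatto \cite{BianchiniBonicatto2017} for the renormalization property of bounded nearly incompressible $\BVS$ fields and refers to Theorem~3.22 in \cite{DeLellis2007} for the deduction of the stability statement from renormalization, which is exactly the superposition-principle argument (tightness of the lifted measures $\eta_n$, identification of the narrow limit via uniqueness, upgrade to strong convergence) that you spell out. One small correction: the reference for the renormalization result should be \cite{BianchiniBonicatto2017}, not \cite{BianchiniGloyer2011}, which in this paper concerns monotone velocity fields (the paper itself has the same slip in the paragraph introducing the theorem).
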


The statement of Theorem~\ref{T:BRESSAN} was first formulated in
\cite{Bressan2003} and is nowadays known as Bressan's conjecture. It was proved
very recently by Bianchini and Bonicatto \cite{BianchiniBonicatto2017}. The
proof establishes the existence of a set of \emph{untangled flow lines} that
partition the space-time domain and induce the disintegration of the continuity
equation into one-dimensional transport problems along integral curves. One can
then show that the flow has the renormalization property, from which Bressan's
conjecture can be deduced; see Theorem~3.22 in \cite{DeLellis2007} for details.
It would be very interesting to make the stability/convergence of the
approximate flows $\FLOW_n$ more quantitative, along the lines of
\cite{Seis2019}, for example; see also \cite{CrippaDeLellis2008} for additional
information.

%========== REMARK
\begin{remark}
Another stability result for approximate flows was derived in
\cite{BianchiniGloyer2011} for Filippov solutions for \emph{monotone} velocity
fields, which satisfy the inequality
\begin{equation}
	\langle \VELO(t,y)-\VELO(t,x), y-x\rangle \LS \alpha(t) |y-x|^2
\label{E:MONO}
\end{equation}
for a.e.\ $t\in I$ and $x,y\in \R^d$, where $\alpha \in \L^1(I)$. Rescaling
time, one can reduce the problem to the case of zero right-hand side in
\eqref{E:MONO}; cf.\ Remark~\ref{R:NONPOS}. The condition \eqref{E:MONO} is
sufficient to ensure uniqueness of Filippov solutions; see \cite{Filippov1964}.
In the context of transport equations it has been studied  in
\cites{PoupaudRascle1997,BouchutJamesMancini2005}. Note that \eqref{E:MONO}
corresponds to a compressive situation where $\nabla\cdot\VELO$ may become a
negative measure. 
\end{remark}

%%%%%%%%%%%%%%%%%%%%%%%%%%%%%%%%%%%%%%%%%%%%%%%%%%%%%%%%%%%%%%%%%%%%%%%%%%%%%%%%
%%%%%%%%%%%%%%%%%%%%%%%%%%%%%%%%%%%%%%%%%%%%%%%%%% Petrov Galerkin Approximation

\subsection{Petrov-Galerkin Approximation}\label{SS:PGA}

To approximate the variational problem \eqref{E:VARIATIONAL}, an optimally
stable Petrov-Galerkin method can be used, which we describe now. We follow the
presentation in \cite{BrunkenSmetanaUrban2019}. Let $\U \subset \L^2(Q)$ and $\V
\subset \Y$ be suitable trial and test spaces (e.g., finite dimensional finite
element spaces). Suppose that $\U, \V$ are Hilbert (sub)spaces. Then we aim to
find $U\in\U$ such that
\begin{equation}
	b(U,V) = f(V)
	\quad\text{for all $V\in\V$.}
\label{E:PG}
\end{equation}
Arguing as for Theorem~\ref{T:EXI}, one can show that \eqref{E:PG} has a unique
solution if
\begin{equation}
	\gamma 
		\GS \sup_{U\in\U} \sup_{V\in\V} 
			\frac{b(U,V)}{\|U\|_{\L^2(Q)} \|V\|_\Y}
		\GS \inf_{U\in\U} \sup_{V\in\V} 
			\frac{b(U,V)}{\|U\|_{\L^2(Q)} \|V\|_\Y} 
		\GS \beta
\label{E:DISC}
\end{equation}
for constants $\beta>0$ and $\gamma<\infty$. Since $\U\subset \L^2(Q)$ and
$\V\subset \Y$, by assumption, the upper bound in \eqref{E:DISC} holds with
$\gamma=1$ because of \eqref{E:OPTIM}. 

Let $U\in \L^2(Q)$ and $U_\U \in \U$ be unique solutions of
\eqref{E:VARIATIONAL} and \eqref{E:PG}, respectively. Since $\V \subset \Y$ we
obtain the Galerkin orthogonality
\begin{equation}
	b(U-U_\U, V) = 0
	\quad\text{for all $V\in\V$.}
\label{E:ORTHO}
\end{equation}
The $\inf$-$\sup$-condition \eqref{E:DISC} now implies that, for any $W \in \U$,
\begin{align*}
	\beta \|U_\U-W\|_{\L^2(Q)} 
		& \LS \sup_{V\in\V} \frac{b(U_\U-W,V)}{\|V\|_\Y}
\\
		& = \sup_{V\in\V} \frac{b(U-W,V)}{\|V\|_\Y}
			\LS \|U-W\|_{\L^2(Q)}.
\end{align*}
We have used \eqref{E:ORTHO} and the continuity of $b$ with $\gamma=1$. Then
\begin{align*}
	\|U-U_\U\|_{\L^2(Q)}
		& \LS \|U-W\|_{\L^2(Q)} + \|U_\U-W\|_{\L^2(Q)}
\\
		& \LS (1+\beta^{-1}) \|U-W\|_{\L^2(Q)},
\end{align*}
and so the error can be estimated by the best approximation of $U$ in $\U$:
\[
	\|U-U_\U\|_{\L^2(Q)} \LS (1+\beta^{-1})
		\inf_{W \in \U} \|U-W\|_{\L^2(Q)}.
\]
Note the dependence on the $\inf$-$\sup$-constant $\beta>0$ from \eqref{E:DISC}.
As the trial space $\U$ approaches $\L^2(Q)$, the approximation $U_\U$ converges
to the exact solution $U$.

In order to compute $\beta$, for any $U \in \U$ we want to solve
\begin{equation}
	\sup_{V \in \V} \frac{b(U, V)}{\|U\|_{\L^2(Q)} \|V\|_{\V}}.
\label{E:MAXI}
\end{equation}
We rewrite \eqref{E:MAXI} as a \emph{constrained} maximization problem
\[
	\sup_{V \in \V} \frac{b(U, V)}{\|U\|_{\L^2(Q)} \|V\|_{\V}}
		= \frac{\sup\{ b(U, V) \colon V\in\V, \|V\|_{\V} = 1 \}}
			{\|U\|_{\L^2(Q)}},
\]
which can be solved using Lagrange's method: We consider the Lagrangian
\begin{equation}
	L_U(V,\lambda) := b(U,V) + \lambda \big( 1-\|V\|_{\V}^2 \big)
\label{E:LAGR}
\end{equation}
with $V \in \V$ and $\lambda \in \R$. We denote by $(V_\circ, \lambda_\circ)$
the critical points of \eqref{E:LAGR}, which are characterized by the fact that
the first variations of \eqref{E:LAGR} vanish. Thus
\begin{equation}
	b(U, W) - 2\lambda_\circ \langle V_\circ,W \rangle_{\V} = 0
	\quad\text{for all $W\in\V$,}
\label{E:ELG}
\end{equation}
and the normalization condition $\|V_\circ\|_{\V}^2 = 1$ holds. Using $W =
V_\circ$ in \eqref{E:ELG}, we find that $2\lambda_\circ = b(U, V_\circ)$.
Defining $V_U := 2\lambda_\circ V_\circ$, we have the identity
\begin{equation}
	b(U,W) = \langle V_U,W \rangle_{\V}
	\quad\text{for all $W\in\V$,}
\label{E:BUW}
\end{equation}
which shows that $V_U$ is actually the unique element in $\V$ representing the
bounded linear map $V \mapsto b(U,V)$ on the Hilbert space $\V$, in the sense of
the \emph{Riesz representation} theorem. Note that, by definition and
\eqref{E:BUW}, we have the identities
\begin{equation}
	\langle U, \BOP^*V_U \rangle_{\L^2(Q)} 
		= b(U,V_U)
		= \| V_U \|_{\V}^2
		= \langle \BOP^*V_U, \BOP^*V_U \rangle_{\L^2(Q)}.
\label{E:IDEN}
\end{equation}

It is convenient to make the choice that
\begin{equation}
	\U := \BOP^*(\V),
\label{E:CONVENIENT}
\end{equation}
in which case \eqref{E:IDEN} implies that $U = \BOP^*V_U$, or $V_U =
(\BOP^*)^{-1}U$ because the linear map $\BOP^*$ is bounded invertible; see
Theorem~\ref{T:EXI} above. We call the map $\tau \colon \U \longrightarrow \V$,
defined by $\tau(U) := V_U$ for all $U\in\U$, the \emph{trial-to-test map}. Then
\[
	\sup_{V \in \V} \frac{b(U, V)}{\|U\|_{\L^2(Q)} \|V\|_{\V}}
		= \frac{b(U, V_U)}{\|U\|_{\L^2(Q)} \|V_U\|_{\V}}
		= \frac{\langle U, \BOP^* (\BOP^*)^{-1} U \rangle_{\L^2(Q)}}
			{\|U\|_{\L^2(Q)} \|\BOP^* (\BOP^*)^{-1} U\|_{\L^2(Q)}}
		= 1
\]
for all $U \in \U$. Taking the $\inf/\sup$ in $U$, we obtain the continuity
estimate and the $\inf$-$\sup$-condition with constant $1$. Therefore the method
is optimally stable.

\medskip

In order to realize an optimally conditioned and thus optimally stable
Petrov-Galerkin method that is also computationally feasible, it was proposed in
\cite{BrunkenSmetanaUrban2019} to first choose a conformal finite-dimensional
test space $\V_h \subset \Y$, which depends on some discretization parameter
$h>0$, and then to define the trial space through \eqref{E:CONVENIENT}. Possible
choices for $\V_h$ include spaces of sufficiently smooth functions such as
splines or broken spaces of finite elements on suitable triangulations of the
domain. It is then proved that the $\inf$-$\sup$-property holds uniformly in
$h$. The authors also consider the dependence of the variational problem on a
compact set of parameters and prove compactness of the solution set (analogous
to our Theorem~\ref{T:COMPACT}).

A more sophisticated framework was developed in
\cite{BroersenDahmenStevenson2018} utilizing a \emph{discontinuous}
Petrov-Galerkin approximation. In this case, the unknown is a pair of $U \in
\L^2(Q)$ as above and an additional function that captures the jumps across the
boundaries of the triangulation cells. This is motivated by the fact that for
generic $\L^2(Q)$-functions the trace onto hyperplanes may not be defined. The
authors prove near optimal stability of this approximation with discretized
data. 

%========== REMARK
\begin{remark}
The discussion above suggests a numerical method for solving the initial value
problem \eqref{E:TRSE} that consists of two steps:
\begin{enumerate}

\item In physical space-time approximate the velocity field $\VELO$ using finite
elements on a suitable triangulation. Compute the corresponding flow $\FLOW$.

\item Solve the variational formulation of the transport equation $\partial_t U
+ CU = F$, with functions $C$ and $F$ defined by \eqref{E:DATA2}.

\end{enumerate}
The triangulation of the first step induces a decompositon of $Q := (0,T) \times
D$ for the second step, as cells are mapped by the flow $\FLOW$. It is possible
to iterate the two steps adaptively, taking into account the error estimates in
\cite{BroersenDahmenStevenson2018}, for instance. The implementation of this
approach will be considered in a future publication.
\end{remark}

%%%%%%%%%%%%%%%%%%%%%%%%%%%%%%%%%%%%%%%%%%%%%%%%%%%%%%%%%%%%%%%%%%%%%%%%%%%%%%%%
%%%%%%%%%%%%%%%%%%%%%%%%%%%%%%%%%%%%%%%%%%%%%%%%%%%%%%%%%%%%%%%%%%%%%%%%%%%%%%%%
%%%%%%%%%%%%%%%%%%%%%%%%%%%%%%%%%%%%%%%%%%%%%%%%%%%%%%%%%%%%%%%%%%%%%%%%%%%%%%%%
%%%%%%%%%%%%%%%%%%%%%%%%%%%%%%%%%%%%%%%%%%%%%%%%%%%%%%%%%%%%%%%%%%%%%%%%%%%%%%%%
%%%%%%%%%%%%%%%%%%%%%%%%%%%%%%%%%%%%%%%%%%%%%%%%%%%%%%%%%%%%%%% Acknowledgements

\section*{Acknowledgements}

The project has received funding from the European Union's Horizon 2020 research
and innovation programme under the Marie Sk\l{}odowska-Curie grant agreement
No.\ 642768. Financial support has also been provided by the German Research
Foundation (DFG) in the framework of the Collaborative Research Centre
Transregio 40 (SFB-TR40) and the Research Training Group GRK 2326/1.

The authors would like to thank the anonymous referee for her/his careful and
thorough reading of the manuscript and for pointing out reference
\cite{Gusev2018}.

%%%%%%%%%%%%%%%%%%%%%%%%%%%%%%%%%%%%%%%%%%%%%%%%%%%%%%%%%%%%%%%%%%%%%%%%%%%%%%%%
%%%%%%%%%%%%%%%%%%%%%%%%%%%%%%%%%%%%%%%%%%%%%%%%%%%%%%%%%%%%%%%%%%%%%%%%%%%%%%%%
%%%%%%%%%%%%%%%%%%%%%%%%%%%%%%%%%%%%%%%%%%%%%%%%%%%%%%%%%%%%%%%%%%%%%%%%%%%%%%%%
%%%%%%%%%%%%%%%%%%%%%%%%%%%%%%%%%%%%%%%%%%%%%%%%%%%%%%%%%%%%%%%%%%%%%%%%%%%%%%%%
%%%%%%%%%%%%%%%%%%%%%%%%%%%%%%%%%%%%%%%%%%%%%%%%%%%%%%%%%%%%%%%%%%% Bibliography

% biblatex
\printbibliography

% bibtex
% \bibliography{Transport}{}
% \bibliographystyle{alpha}

\end{document}